\documentclass[a4paper,11pt]{amsart}
\usepackage[basic]{mnotes}
\usepackage{graphicx,xcolor}
\usepackage{amsmath,amssymb,amsthm}
\usepackage{enumitem}
\usepackage{mathtools}
\usepackage{tikz}
\usepackage{stmaryrd}
\usepackage{bbold}
\usepackage{dsfont}
\usepackage[OT1]{fontenc}
\usepackage[normalem]{ulem}

\newtheorem{thm}{Theorem}[section]
\newtheorem*{thm*}{Theorem}
\newtheorem{cor}[thm]{Corollary}
\newtheorem{lem}[thm]{Lemma}
\newtheorem{claim}[thm]{Claim}
\newtheorem{prop}[thm]{Proposition}
\newcounter{introthm}

\newtheorem{introtheo}[introthm]{Theorem}
\theoremstyle{definition}

\newtheorem{fact}[]{Fact}
\theoremstyle{remark}
\newtheorem{rem}[thm]{Remark}
\newtheorem{exmp}[thm]{Example}

\newcommand{\R}{\mathds{R}}

\newcommand{\N}{\mathds{N}}

\newcommand{\I}{\mathds{I}}
\newcommand{\J}{\mathds{J}}

\newcommand{\mc}{\mathcal}
\newcommand{\eps}{\varepsilon}
\newcommand{\ssq}{\subseteq}
\renewcommand{\:}{\colon}

\DeclareMathOperator{\Per}{per}
\DeclareMathOperator{\PER}{PER}

\DeclareMathOperator{\Fix}{Fix}

\renewcommand{\d}{\partial}
\newcommand{\Id}{\operatorname{Id}}
\newcommand{\0}{\mathbb 0}
\newcommand{\1}{\mathbb 1}

\title{Period spectrum and rigidity of interval maps}
\author[Gabriel Fuhrmann]{Gabriel Fuhrmann}
\author[Maik Gr\"{o}ger]{Maik Gr\"{o}ger}
\author[Alejandro Passeggi]{Alejandro Passeggi}

\address[Gabriel Fuhrmann]{Department of Mathematical Sciences, Durham University, UK}
\email{gabriel.fuhrmann@durham.ac.uk}

\address[Maik Gr\"{o}ger]{Faculty of Mathematics and Computer Science, Jagiellonian University in Krak\'ow, Poland}
\email{maik.groeger@im.uj.edu.pl}

\address[Alejandro Passeggi]{CMAT, Facultad de Ciencias, Universidad de la Rep\'{u}blica,
Igu\'{a} 4225, 11400 Montevideo, Uruguay} \email{apasseggi@cmat.edu.uy}
\subjclass[2020]{37C15, 37E05, 37E15}

\begin{document}

\begin{abstract}
We study the extent to which periodic orbits determine a one-dimensional dynamical system.
In particular, we show that they determine every topologically mixing interval map.
More generally, for a non-mixing interval map $f$ with dense periodic points, the periodic orbits need not determine $f$, but they always do so for $f^2$.
Finally, for maps on compact intervals, we obtain a local version of this rigidity: for each basic set $B$ arising in the spectral decomposition of an interval map $f$, there exists an iterate $f^q$ whose restriction to $B$ is determined by the periodic orbits of $f$.
\end{abstract}

\maketitle

\section{Introduction}

Rigidity phenomena are a recurring and fundamental theme in mathematics.
A classic example is Kac's question, ``Can one hear the shape of a drum?'' \cite{Kac1966}, which asks whether the geometry of a domain is determined by the spectrum of its Laplacian.
For further details, see \cite{Zelditch2004} and the references therein.

A well-established strategy for different instances of rigidity problems is to analyse the periodic orbits of a naturally associated dynamical system.
In the context of Kac's question, the relevant dynamics for planar domains are given by billiard flows; see, e.g., \cite{KaloshinSorrentino2022} for a recent overview.
For closed Riemannian manifolds, one considers the geodesic flow, with major developments especially in the negatively curved setting (see, for instance, \cite{GuillarmouLefeuvre2019}).
In either case, it is largely the chaotic nature of the associated dynamics that drives the observed rigidity.

This leads naturally to a rigidity problem in dynamics in its own right:
\begin{center}
    To what extent do the periodic orbits of a chaotic map determine the map itself?
\end{center}
Let us emphasise that this question concerns the collection of periodic orbits as subsets of the phase space, without any additional combinatorial information; in chaotic regimes, the question would otherwise become trivial.

The above problem arises naturally in several classical settings of chaotic dynamics, including Anosov and Axiom~A systems, pseudo-Anosov maps, and related variants.
To approach it in any degree of generality, interval dynamics is the natural place to begin: many of the fundamental mechanisms of chaotic behaviour are already present in dimension one, and it is precisely in this setting that we obtain general positive answers.

The following statement illustrates the flavour of our results.
Here and throughout, an \emph{interval map} is a continuous map $f\colon \I\to \I$, where $\I$ is a non-degenerate interval; $\PER(f)$ denotes the \emph{period spectrum}, that is, the set of periodic orbits of $f$.

\begin{thm*}
Let $f$ and $g$ be topologically mixing interval maps.
If $\PER(f)=\PER(g)$, then $f=g$.
\end{thm*}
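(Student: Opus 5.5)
The plan is to reduce the statement to a comparison of $f$ and $g$ on a single periodic orbit, and then use a specially built auxiliary periodic orbit to force $g$ to agree with $f$ there. Two standard facts about a topologically mixing interval map $f$ will be used: its periodic points are dense in $\I$, and it satisfies Blokh's periodic specification property. Since both $f$ and $g$ are continuous, density reduces the claim to showing $f(x)=g(x)$ for every $f$-periodic point $x$. Let $O=\{x,f(x),\dots,f^{n-1}(x)\}$ be the orbit of $x$, of minimal period $n$. Because $\PER(f)=\PER(g)$, $O$ is also a $g$-orbit. Hence $g|_O$ is a cyclic permutation of $O$ commuting with $f|_O$, so $g|_O=f^k|_O$ for some $k\in\{0,\dots,n-1\}$ coprime to $n$. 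The same holds on every periodic orbit. If $n=1$ there is nothing to prove, so assume $n\ge 2$. We must exclude every $k\neq 1$, and in fact we will show $g|_O=f|_O$ by contradiction.

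Suppose $g(x)=f^k(x)\neq f(x)$. Since $f^k(x)\neq f(x)$, continuity of $f$ and $g$ at $x$ yields $\delta>0$ with $d(g(y),f(y))>3\varepsilon$ for all $y\in B_\delta(x)$, where $\varepsilon>0$ is smaller than a third of the minimal distance between distinct points of $O$. We also fix an open interval $U$ whose closure is at distance $>\varepsilon$ from $O$. Such a $U$ exists because $O$ is finite and $\I$ is non-degenerate.

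The key construction applies specification. For a large $N$ (a multiple of $n$), we obtain a periodic point $y_0$ of period $N+M$ with the following properties:
\begin{itemize}
\item its orbit points $y_i=f^i(y_0)$ are $\varepsilon$-close to $f^i(x)$ for $0\le i\le N$, and $y_0\in B_\delta(x)$;
\item $y_i\in U$ for $N<i<N+M$, where $M\ge N+2$.
\end{itemize}
By equicontinuity of $f,\dots,f^{n}$ we may take $\varepsilon$-shadowing tight enough that continuity of $g$ also gives $d(g(y_i),f^{k}(f^{i}(x)))<\varepsilon$ for all $i\le N$. As before, $g$ acts on the orbit $O'$ of $y_0$ as $f^{K}$ for some $K$, which we normalise to $0\le K<N+M$.

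Now $g(y_0)=y_K$ lies within $\varepsilon$ of $f^k(x)\in O$. The only orbit points near $O$ are $y_0,\dots,y_N$, so $K\le N$, and $K\equiv k \pmod n$ by the choice of $\varepsilon$. Since $n\ge 2$ and $k$ is coprime to $n$, we have $k\not\equiv 0 \pmod n$, so $K\ge 1$. Then $g(y_N)=y_{N+K}$ with $N<N+K\le 2N<N+M$, so $g(y_N)\in U$, which is far from $O$. On the other hand, continuity forces $g(y_N)$ to be near $f^{k}(f^N(x))\in O$, a contradiction. Hence $k\equiv 1$, so $g=f$ on $O$, and by density of periodic points $g=f$ everywhere.

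The main obstacle is the construction of the periodic orbit $O'$: it must follow $O$ for exactly a prescribed time, then make a long excursion that stays away from $O$, and close up. For mixing interval maps I would take this from Blokh's theorem that such maps have the periodic specification property. If a self-contained argument is preferred, I would build it from horseshoes instead. Mixing gives, for a small interval $J\ni x$ and the interval $U$, iterates with $f^{a}(J)\supseteq U$ and $f^{b}(U)\supseteq J$. Chaining the relations $J\to f^{n}(J)\supseteq J$ (valid when $J$ is a small neighbourhood of the repelling-or-not point $x$, after shrinking $J$ appropriately) with $U\to U$ coverings would yield a cycle of coverings. A fixed point of the corresponding composite inverse branch then gives the desired periodic point. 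The care needed is to keep intermediate images near $O$ during the first $N$ steps, which is where I expect the technical work to lie.
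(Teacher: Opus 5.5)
\textbf{There are two genuine gaps in your argument.}

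\textbf{First gap: the reduction to powers of $f$.} You claim that ``$g|_O$ is a cyclic permutation of $O$ commuting with $f|_O$, hence $g|_O=f^k|_O$''. This is unjustified. Two $n$-cycles on the same $n$-point set need not commute. The hypothesis $\PER(f)=\PER(g)$ only tells you that $O$ is a single $g$-cycle, not which cycle. Recovering the permutation $g|_O$ from the set $O$ is exactly the heart of the problem.

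The maps $F_1,F_2$ of Example~\ref{exmp: transitive and conjugate maps with same spectrum} show that your deduction fails in general. Take $f$ to be the mixing map on $[0,1/2]$ from that example, and consider a $2n$-periodic orbit $O=\{a_j,b_j\}$ with $a_j=f^j(x)$ and $b_j=1-f^j(x)$. Then $F_1\colon a_j\mapsto b_{j+1}\mapsto a_{j+1}$, while $F_2\colon a_j\mapsto b_j\mapsto a_{j+1}$. For $n\geq 3$, $F_2|_O$ is not a power of $F_1|_O$. These maps are not mixing, but your deduction never used mixing. You also apply the same unjustified reduction to $O'$ in order to write $g(y_N)=y_{N+K}$.

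\textbf{Second gap: the auxiliary orbit cannot exist.} Your contradiction never uses $k\neq 1$; the bound $d(g(y),f(y))>3\varepsilon$ is never invoked. Run everything after the construction with $g=f$. Then $k=K=1$, both ``commuting'' claims hold trivially, and you get $f(y_N)=y_{N+1}\in U$. At the same time, continuity forces $f(y_N)$ to be close to $f^{N+1}(x)\in O$.

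So the argument proves too much, and the culprit is the construction: an orbit that tightly shadows $O$ up to time $N$ cannot be in $U$ at time $N+1$. Specification only provides periodic orbits with an uncontrolled transition gap between shadowed segments. During that gap the orbit may pass close to $O$, and then your claim that ``the only orbit points near $O$ are $y_0,\dots,y_N$'' fails. In addition, Blokh's specification theorem is for compact intervals, whereas the statement allows arbitrary $\I$.

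\textbf{What is missing.} You need a mechanism that actually pins down the combinatorics of $g$ on an orbit, and this is where mixing must enter. The paper proceeds in three steps:
\begin{enumerate}
\item It shows that shared periodic orbits force $f$ and $g$ to have the same left--right behaviour at every shared periodic point. This uses the $U_n$-component analysis together with the local types of fixed points of $f^{ip}$.
\item It proves combinatorially, via Lyndon--Sch\"utzenberger, that for prime $p$ the left--right behaviour determines a $p$-cycle.
\item It uses mixing only to obtain density of stable points of prime period.
\end{enumerate}
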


Recall that a map $f$ is \emph{topologically mixing} if, for any non-empty open sets $U,V\subseteq \I$, there is $N\in\N$ such that $f^n(U)\cap V\neq\emptyset$ for all $n\geq N$.
For interval maps, chaotic behaviour is closely tied to topological mixing, as shown by Blokh's spectral decomposition theorem \cite{Blokh1995}.
In this sense, topological mixing lies at the heart of chaos on the interval and, accordingly, plays a central role in this article.

By contrast with the above theorem, it is not difficult to construct two non-mixing maps with dense periodic points that share the same periodic orbits (see Figure~\ref{fig: transitive and conjugate maps with same spectrum with reverse tent map}).

\begin{figure}[h]
    \centering
    \begin{tikzpicture}[scale=3.8, declare function={g(\x)=0.5 * abs(4 * \x - 1); T1(\x)=ifthenelse(\x<=0.5, 1 - g(\x), 1 - \x); T2(\x)=ifthenelse(\x<=0.5, 1 - \x, g(1 - \x));}]
    
    \begin{scope}[local bounding box=left]
        \draw[-] (0,0) -- (1.1,0);
        \draw[-] (0,0) -- (0,1.1);
        \draw[lightgray] (0,0) -- (1,1);
        \draw[dashed, lightgray] (0,0.5) -- (1,0.5);
        \draw[dashed, lightgray] (0,1) -- (1,1);
        \draw[dashed, lightgray] (1,0) -- (1,1);
        \draw[dashed, lightgray] (0.5,0) -- (0.5,1);
        \draw[thick] plot[domain=0:1, samples=101] (\x, {T1(\x)});
        \foreach \X in {0,1}
        {\draw (\X,0.05) -- (\X,-0.05) node[below]{$\X$};}
        \draw (0.05,1) -- (-0.05,1) node[left]{$1$};
        \node[above left] at (0.5,0.8) {$F_1$};
    \end{scope}
    
    \begin{scope}[xshift=2.0cm, local bounding box=right]
        \draw[-] (0,0) -- (1.1,0);
        \draw[-] (0,0) -- (0,1.1);
        \draw[lightgray] (0,0) -- (1,1);
        \draw[dashed, lightgray] (0,0.5) -- (1,0.5);
        \draw[dashed, lightgray] (0,1) -- (1,1);
        \draw[dashed, lightgray] (1,0) -- (1,1);
        \draw[dashed, lightgray] (0.5,0) -- (0.5,1);
        \draw[thick] plot[domain=0:1, samples=101] (\x, {T2(\x)});
        \foreach \X in {0,1}
        {\draw (\X,0.05) -- (\X,-0.05) node[below]{$\X$};}
        \draw (0.05,1) -- (-0.05,1) node[left]{$1$};
        \node[above left] at (0.38,0.8) {$F_2$};
    \end{scope}
    \end{tikzpicture}
    \caption{
        An example of two interval maps $F_1$ and $F_2$ that are not topologically mixing but have dense periodic points and the same period spectrum (see Example~\ref{exmp: transitive and conjugate maps with same spectrum}). 
        While $F_1$ and $F_2$ are conjugate, the same phenomenon can also occur for non-conjugate maps on $[0,1]$; see Example~\ref{exmp: non-conjugate maps with same spectrum}.
    }
    \label{fig: transitive and conjugate maps with same spectrum with reverse tent map}
\end{figure}

We now state our main results.
Throughout, we follow the convention that periods are always minimal.
That is, we call $x$ \emph{$p$-periodic} if $f^p(x)=x$ and $f^q(x)\neq x$ for $1\leq q<p$; accordingly, we call $x$ \emph{prime periodic} if it is $p$-periodic for some prime $p$. 
We write $\Per_p(f)\subseteq \I$ for the set of points with period $p$.

The preceding theorem was stated separately above for emphasis; our first main result now subsumes it.

\begin{introtheo}\label{introthm: mixing maps are determined by period spectrum}
    Suppose $f, g \colon \I\to\I$ are continuous maps with $\PER(f)=\PER(g)$.
    If the set $\bigcup_{p \text{ prime}} \Per_p(f)$ of prime periodic points is dense in $\I$, then $f=g$.

    In particular, if $f$ is topologically mixing and $\PER(f)=\PER(g)$, then $f=g$.
\end{introtheo}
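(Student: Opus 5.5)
The plan is to argue by contradiction and use the primes to pin down how $g$ acts on each periodic orbit. Suppose $f(x_0)\neq g(x_0)$ for some $x_0\in\I$. By continuity there is a small open interval $U\ni x_0$ such that $f(U)$ and $g(U)$ are contained in disjoint intervals $A$ and $B$, both also disjoint from $U$. By hypothesis $U$ contains many prime periodic points of $f$.

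The basic observation concerns an $f$-orbit $O$ of prime period $p$. Since $\PER(f)=\PER(g)$, the set $O$ is also a single $g$-orbit, so $g|_O$ is a $p$-cycle, just as $f|_O$ is. Because $p$ is prime, every power $f^j|_O$ with $p\nmid j$ is again a $p$-cycle on $O$. I would first try to show that $g|_O$ is in fact a power $f^{k}|_O$ for some $k=k(O)$, coprime to $p$, with $g(x)=f^k(x)$ for all $x\in O$. The intended mechanism is to compare $O$ with nearby prime orbits. Those orbits give, via $\PER(f)=\PER(g)$, cyclic permutations that must vary consistently, by continuity of both maps on small neighbourhoods of the points of $O$.

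Next I would localise the exponent. For $k\in\N$ consider the closed sets $F_k=\{x : g(x)=f^k(x)\}$. Their union contains the dense set of prime periodic points. I want a single $k$ such that $F_k$ contains a non-degenerate interval $J\subseteq U$, that is, $g=f^k$ on $J$. A Baire argument does not apply directly, because the union is only dense and not all of $\I$. I would therefore aim to use continuity to show that the exponent $k(O)$ is locally bounded, or eventually constant, along sequences of prime orbits accumulating at a point. This gives finitely many closed sets covering a dense subset of $U$, so their union is all of $U$, and Baire's theorem then yields $J$. This step is where I expect the main obstacle: controlling $k(O)$ uniformly across orbits of unbounded period. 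The natural first attempt is to exploit that $f^{k}(x)$ and $g(x)$ must both lie in $B$ for $x\in O\cap U$, and to count how many points of $O$ lie in $U$, $A$ and $B$ using that $f|_O$ and $g|_O$ are bijections.

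Once $g=f^k$ holds on an interval $J\subseteq U$, we have $k\neq 1$ since $f(x_0)\neq g(x_0)$, so $k\geq 2$ (or $k\equiv 0$, which is impossible since $g$ has no fixed points in $U$). I would derive a contradiction from the order of points inside a prime orbit meeting $J$. On such an orbit $O$, $g$ and $f^k$ agree at the points in $J$. Running the same argument at other points of $O$, where by density we also find a local exponent, forces these exponents to be compatible for every prime $p$ large enough. Letting $p$ vary then gives $k\equiv1 \pmod p$ for infinitely many primes $p$, hence $k=1$, which is a contradiction.

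For the final assertion, it remains to check that a topologically mixing interval map has dense prime periodic points. I would use the standard fact that for mixing $f$ and any open interval $V$ there is $N$ such that $f^n$ has a horseshoe over a subinterval of $V$ for all $n\geq N$. That horseshoe produces points of minimal period exactly $n$ in $V$ for every large $n$, in particular for every large prime $n$. The first part of the theorem then applies.
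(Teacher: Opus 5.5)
\textbf{There is a genuine gap: none of your three main steps is actually carried out, and the first and last do not follow from anything you state.}

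Step one: knowing that $O$ is a single orbit for both maps only tells you that $g|_O$ is \emph{some} $p$-cycle on $O$. The powers $f^k|_O$ with $p\nmid k$ account for only $p-1$ of the $(p-1)!$ cyclic permutations of $O$, a proper subset as soon as $p\geq 5$. So the claim $g|_O=f^{k}|_O$ is a strong statement, and ``nearby prime orbits must vary consistently by continuity'' does not prove it. You never say what the consistency condition is, or why it forces $g|_O$ into the cyclic group generated by $f|_O$.

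Step two: the uniform control of $k(O)$ needed for the Baire step is left open, as you acknowledge. Your counting idea only yields $|O\cap A|,|O\cap B|\geq |O\cap U|$, which is not a contradiction.

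Step three: even granting $g=f^k$ on $J$, nothing produces $k\equiv 1 \pmod p$. On a single orbit, $g|_O=f^k|_O$ is compatible with $\PER(f)=\PER(g)$ for every $k$ coprime to $p$. Any contradiction must therefore come from the interplay of the linear order of $\I$ with continuity across orbits, and your argument never uses the order in a substantive way.

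The missing idea is order-theoretic and comes in two parts.

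First, for prime $p$, a $p$-cycle on $x_1<\dots<x_p$ is uniquely determined by its left-right behaviour, i.e.\ by the signs of $f^n(x_i)-x_i$ (Lemma~\ref{lem: left-right determines map}, via Lyndon--Sch\"utzenberger). This is where primality genuinely enters; the statement fails for composite $p$.

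Second, the period spectrum determines the left-right behaviour of every shared periodic point (Theorem~\ref{thm: periodic orbits determine left-right behaviour}). Suppose $f^n(x_0)>x_0>g^n(x_0)$. On the $U_n$-component of $x_0$, neither $f^n$ nor $g^n$ can cross the diagonal, since a crossing would give a stable, hence shared, periodic orbit there. A minimality argument over $n$ and the period $P$, together with the type analysis of Lemma~\ref{lem: local type of fixed points}, then shows that over one period $x_0$ is sent to its right strictly more often under $f$ than under $g$. This contradicts $\mathcal O_f(x_0)=\mathcal O_g(x_0)$.

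Together these give $f=g$ on the dense set of prime periodic points, hence everywhere by continuity.

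Your treatment of the mixing case is essentially the paper's and is fine, with one detail to add. Choose the closed subinterval $K\subseteq V$ disjoint from $\Fix_1(f)$, which is nowhere dense for mixing $f$. Then the fixed points of $f^n$ in $K$ (coming from $f^n(K)\supseteq K$) have minimal period exactly $n$ whenever $n$ is prime.
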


We remark that, under additional assumptions, the proof of Theorem~\ref{introthm: mixing maps are determined by period spectrum} essentially yields an algorithm for approximating $f$ on compact subintervals from its period spectrum $\PER(f)$.
Specifically, every periodic point of $f$ must correspond to a proper crossing, that is, if $x$ is $p$-periodic, then the graph of $f^p$ crosses the diagonal at $x$, and, for every $p\in\N$, the set $\Per_p(f)$ must intersect each compact subinterval in only finitely many points.
The latter condition is satisfied, for instance, by any continuous piecewise monotone map on a compact interval.

Although distinct interval maps may share the same period spectrum even when their periodic points are dense, we obtain the following rigidity result for the second iterate.

\begin{introtheo}\label{introthm: dense periodic orbits determine second iterate}
    Suppose $f, g \colon \I\to\I$ are continuous maps with $\PER(f)=\PER(g)$.
    If the set $\bigcup_{p \in \N} \Per_p(f)$ of all periodic points is dense in $\I$, then $f^2=g^2$.
\end{introtheo}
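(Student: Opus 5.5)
The plan is to reduce to Theorem~\ref{introthm: mixing maps are determined by period spectrum}, applied to $f^2$ and $g^2$ on suitable invariant pieces, and to use a simple combinatorial observation about orbits. Let $O\in\PER(f)=\PER(g)$ be an orbit of period $n$. Both $f|_O$ and $g|_O$ are cyclic permutations of $O$ of order $n$, so $g|_O=f^k|_O$ for some $k$ coprime to $n$. In particular, if $n$ is even then $k$ is odd. Moreover, if $n=2m$ with $m$ an odd prime, or if $n=4$, then $g^2|_O=f^{2k}|_O$ generates the same cyclic group as $f^2|_O$ on each of the two $f^2$-orbits in $O$. Hence $g^2$ has exactly the same orbits inside $O$ as $f^2$. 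For $n=4$ this is sharper: $2k\equiv 2 \pmod 4$ gives $g^2=f^2$ on $O$ outright. The same holds trivially for $n\in\{1,2\}$ and for $n$ an odd prime.

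Next I would invoke the classical structure theory of interval maps with dense periodic points (Barge--Martin, or Blokh's spectral decomposition). This gives a countable family of pairwise disjoint closed intervals $J_i$ with the following properties. Each $J_i$ is either $f$-invariant or exchanged with some $J_{i'}$ by $f$. On each $J_i$ the map $f^2|_{J_i}$ is transitive; after possibly splitting $J_i=L_i\cup R_i$ at an $f$-fixed point (the non-mixing transitive case), $f^2$ or $f^4$ is mixing on each piece. Off $\bigcup_i J_i$ we have $f^2=\Id$. On the complement, every point has $f$-period $1$ or $2$, and these orbits are also $g$-orbits, so $g^2=\Id=f^2$ there.

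Now fix a piece $K$ (one of the $J_i$, $L_i$, $R_i$) on which $h:=f^2|_K$ is mixing and $h(K)=K$. By Theorem~\ref{introthm: mixing maps are determined by period spectrum}, prime periodic points of $h$ are dense in $K$. Each such point of $h$-period $m$ has $f$-period $m$ or $2m$. By the observation above, its $h$-orbit is also a $g^2$-orbit. Since these orbits are dense in $K$ and $g^2$ maps each of them into $K$, continuity gives $g^2(K)\subseteq K$. I then rerun the argument of Theorem~\ref{introthm: mixing maps are determined by period spectrum} for $h$ and $g^2|_K$. That proof should only use equality of the prime-period orbits together with their density, so it yields $f^2=g^2$ on $K$. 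In the case where $f^4$ rather than $f^2$ is mixing on $K$, I would instead argue on each of $L_i$, $R_i$, which are swapped by $f^2$. Using orbits of $f$-period $4$ or $4m$ and the congruence $2k\equiv 2\pmod 4$, I expect to show directly that $g^2$ and $f^2$ agree on a dense set of points there. Combining the pieces with the complement and using continuity gives $f^2=g^2$ on $\I$.

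The main obstacle is this last bookkeeping. Theorem~\ref{introthm: mixing maps are determined by period spectrum} is stated for full period-spectrum equality, and I must verify that its proof genuinely uses only a dense family of prime orbits. I must also handle carefully the pieces where only $f^4$ is mixing. There an orbit of $f$-period $4m$ with $m$ an odd prime has $g=f^k$ with $k$ odd, and $2k$ need not be $\equiv 2 \pmod{4m}$. So I would first try to exploit the parity constraint $k\equiv \pm1\pmod 4$, which forces $g^2$ to swap $L_i$ and $R_i$ exactly as $f^2$ does, and then apply the mixing argument to $f^4$ and $g^4$ on $L_i$ to pin down $g^2$.
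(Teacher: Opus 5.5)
There is a genuine gap, and it sits in your opening combinatorial observation. It is not true that $g|_O=f^k|_O$ for some $k$. An $n$-point set carries $(n-1)!$ cyclic permutations, and only $\varphi(n)$ of them are powers of a given one. For instance, on $O=\{x_1<x_2<x_3<x_4\}$ take $f|_O=(x_1\,x_2\,x_3\,x_4)$ and $g|_O=(x_1\,x_3\,x_2\,x_4)$. Then $f^2|_O=(x_1\,x_3)(x_2\,x_4)$ while $g^2|_O=(x_1\,x_2)(x_3\,x_4)$, so even the $f^2$- and $g^2$-orbits inside $O$ differ. Knowing $O$ as a set tells you nothing about the cyclic order of $g$ on it. This is exactly the difficulty the paper addresses through left-right behaviour (Lemma~\ref{lem: left-right determines map} and Theorem~\ref{thm: periodic orbits determine left-right behaviour}). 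Consequently the following claims are unsupported: that every $h$-orbit is a $g^2$-orbit, that $g^2(K)\subseteq K$, and the $2k\equiv 2 \pmod 4$ and $k\equiv\pm1\pmod 4$ arguments for the $f^4$-pieces. Your treatment of the complement of $\bigcup_i J_i$ is fine, since orbits of size $1$ or $2$ carry no combinatorial ambiguity.

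A second problem is the plan to rerun Theorem~\ref{introthm: mixing maps are determined by period spectrum} using only a dense family of shared prime orbits. Its proof goes through Theorem~\ref{thm: periodic orbits determine left-right behaviour}, which needs $\PER^s(f)\subseteq\PER(g)$ and $\PER^s(g)\subseteq\PER(f)$ for all periods. In Fact~\ref{fact: no crossings in Un-component}, every diagonal crossing of $f^m$ or $g^m$ must produce a shared orbit. So for $h$ and $g^2|_K$ you would have to show that the stable orbits of every period of both maps are shared, including those of $g^2$, which your argument never touches.

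What is missing is geometric control over how $g$ permutes the pieces. The paper uses Theorem~\ref{thm: CovenMulvey}, which yields pieces on which $f$ is mixing (if $f$ does not separate $\I$) or $f^2$ is mixing (if it does); no $f^4$-pieces arise. In the non-separating case, it applies Corollary~\ref{cor: mixing maps are fully determined by periodic orbits}, in the form of Remark~\ref{rem: mixing maps are fully determined by periodic points}, directly to $f$ and $g$ and gets $f=g$ with no squaring at all. In the separating case, density of shared periodic orbits shows that $J_i\cup f(J_i)$ is $g$-invariant, and in fact that $g(J_i)=f(J_i)$ and $g(f(J_i))=J_i$. When $J_i$ and $f(J_i)$ meet in the fixed point, this step needs three things: transitivity of $g$ there, Theorem~\ref{thm: transitive vs mixing maps}, and the fact that $g$ cannot be mixing on $J_i\cup f(J_i)$. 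Only then does $\mc O_{g^2}(x)=\mc O_g(x)\cap J_i=\mc O_f(x)\cap J_i=\mc O_{f^2}(x)$ follow. This gives both stable-orbit inclusions for $f^2|_{J_i}$ and $g^2|_{J_i}$, and mixing rigidity then yields $f^2=g^2$ on $J_i$.
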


We obtain the above statement as a consequence of Theorem~\ref{introthm: mixing maps are determined by period spectrum}, together with the classical fact that the second iterate of any interval map with dense periodic points admits a decomposition into intervals on each of which it is either topologically mixing or the identity---in the compact case, this goes back originally to Barge and Martin \cite{BargeMartin1985}.

Even beyond dense periodicity, periodic orbits are known to carry substantial information about the dynamics of interval maps. 
A classical manifestation of this is Sharkovsky's theorem on the coexistence of periodic orbits \cite{Sharkovsky1964b}, together with its refinements describing how the combinatorics of one periodic orbit can force combinatorial features of others; see \cite{MisiurewiczNitecki1991} and the references therein.

One reason for the central role of periodic orbits is that, for interval maps, the closure of the union of all periodic orbits coincides with the closure of the recurrent points and hence contains the support of every invariant measure of the map---another foundational result due to Sharkovsky \cite{Sharkovsky1964}, later independently rediscovered by Young \cite{Young1979} and by Coven and Hedlund \cite{CovenHedlund1980}.

In a much subtler and more refined sense, this is also reflected in Blokh's spectral decomposition theorem \cite{Blokh1995}.
One of its consequences is that the positive entropy of a continuous map $f$ on a compact interval is always localised on so-called \emph{basic sets}, that is, maximal $\omega$-limit sets $B$ such that the restriction of an iterate of $f$ to $B$ factors onto a mixing interval map. 
This is reminiscent of, and closely related to, the fact that entropy is always realised by one-dimensional horseshoes \cite{Misiurewicz1979,Misiurewicz1980,Buzzi2006}.

Our final main result concerns these basic sets.
\begin{introtheo}\label{introthm: basic sets}
    Suppose $f:\I\to \I$ and $g:\I\to \I$ are continuous maps with $\I$ compact and $\PER(f)=\PER(g)$. 
    Then each basic set $B$ of $f$ is a basic set of $g$ (and vice versa).
    Further, there exists $q\in \N$ such that
    $\left.f^q\right|_B=\left.g^q\right|_B$.
\end{introtheo}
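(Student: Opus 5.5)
We would reduce the statement to Theorem~\ref{introthm: mixing maps are determined by period spectrum}, applied to a suitable restriction of an iterate. First, recall Blokh's description of a basic set: there is a cycle of intervals $J, f(J),\dots,f^{k-1}(J)$ (with $f^k(J)=J$) and a basic set $B$ contained in their union. The return map $f^k|_J$ is semiconjugate, via a monotone map $\pi\colon J\to \J$ collapsing complementary intervals of $B\cap J$, to a topologically mixing interval map $h\colon\J\to\J$. Moreover, $B\cap J$ is a Cantor set or an interval, and periodic points of $f$ are dense in $B$. We would also use that prime periodic points of $f$ are dense in $B$ up to finitely many exceptions: $h$ is mixing, so by Theorem~\ref{introthm: mixing maps are determined by period spectrum} its prime periodic points are dense. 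These lift to periodic points of $f^k|_J$ in $B$ whose $f^k$-periods are prime, apart from the countably many endpoints of collapsed gaps, which are handled separately.

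\textbf{Step 1: $B$ is a basic set for $g$.} This step must be phrased purely in terms of $\PER(f)$. We would characterise a basic set as a maximal closed set $B$ such that the closure of a family of periodic orbits equals $B$ and, for any two relatively open subsets of $B$ meeting periodic orbits, some periodic orbit meets both in a ``mixing'' combinatorial manner. The difficulty is that the intrinsic definition of a basic set (a maximal $\omega$-limit set containing a periodic point with some property) refers to $f$ itself. The cleanest route is probably via $h$. Basic sets are exactly the sets $\overline{\mathcal P}$ where $\mathcal P$ is a maximal family of periodic orbits that is ``linked'', meaning an orbit in $\mathcal P$ passes between any two sub-cycles of intervals. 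Whether two orbits lie in a common cycle of intervals is detectable from orbit positions together with periods, since cycles of intervals are determined by orbit-order data. I expect this step to be the main obstacle. The first approach I would try is to show that $\Omega$-sets and cycles of intervals (Blokh's maximal $\omega$-limit sets) are unions of closures of periodic orbits, in a way determined by $\PER$. By symmetry the statement for $g$ then follows.

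\textbf{Step 2: agreement of iterates.} Having $B$ for both maps, choose $q$ to be a common multiple of the periods of the cycles of intervals for $f$ and for $g$ associated to $B$; in fact these coincide, since the cycle is recovered from $\PER$ as the components of the convex hull structure of $B$. On $B\cap J$, the maps $F=f^q$ and $G=g^q$ have the same periodic orbits, because a $p$-periodic orbit of $f$ yields $F$-orbits. Take a prime-periodic point $x\in B$ of $F$, of prime period $p$, whose $f$-orbit $O$ is also a $g$-orbit. Then $F$ and $G$ each act on $O\cap J$ as a cyclic permutation of prime order. Arguing as in the proof of Theorem~\ref{introthm: mixing maps are determined by period spectrum}, one shows $F(x)=G(x)$ on a dense set of such points by passing through the factor $h$, where rigidity is already known. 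Concretely, $\pi\circ F=h\circ\pi$, and likewise $\pi\circ G=\tilde h\circ \pi$ with $\tilde h$ mixing and $\PER(h)=\PER(\tilde h)$; hence $h=\tilde h$. Consequently $F(x)$ and $G(x)$ lie in the same fibre of $\pi$. Since fibres meet $B$ in at most two points (gap endpoints), and $F$ and $G$ preserve orientation structure on $O$, this forces $F(x)=G(x)$ for all non-endpoint $x$. Continuity and density then give $f^q|_B=g^q|_B$, possibly after doubling $q$ to kill a possible flip at gap endpoints.
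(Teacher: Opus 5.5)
\textbf{Step 1 is not proved.} Your characterisation of basic sets via ``linked'' families of orbits, and your claim that cycles of intervals can be recovered from orbit-order data, are neither established nor needed. The missing idea is a compactness argument.

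\begin{enumerate}
\item By Proposition~\ref{prop: dense periodic orbits in B} there are $f$-periodic orbits $\mathcal O_n\subseteq B$ that are $1/n$-dense in $B$, so $\mathcal O_n\to B$ in the Hausdorff metric.
\item Since $\PER(f)=\PER(g)$, each $\mathcal O_n$ is a $g$-periodic orbit and hence an $\omega$-limit set of $g$.
\item The family of $\omega$-limit sets of $g$ is closed in the Hausdorff metric \cite{BlokhBrucknerHumkeSmital1996}. So $B$ is itself an $\omega$-limit set of $g$ and lies in a maximal one.
\item That maximal set contains distinct periodic orbits, so it is a basic set $B'$ of $g$ (Remark~\ref{rem: only basic sets have several periodic orbits}).
\item Running the argument for $B'$ gives a basic set of $f$ containing $B$. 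It must equal $B$, because distinct basic sets meet in finitely many points and $B$ is perfect. Hence $B=B'$.
\end{enumerate}

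\textbf{Step 2 has two further problems.}

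\emph{The interval model need not be mixing.} Blokh's model is in general only transitive, so Theorem~\ref{introthm: mixing maps are determined by period spectrum} does not give $h=\tilde h$. Example~\ref{exmp: transitive and conjugate maps with same spectrum} contradicts your conclusion directly. There $B=J=[0,1]$, the cycle has period $1$, $\pi$ is the identity, and $\PER(F_1)=\PER(F_2)$, yet $F_1\neq F_2$. What you actually get is $h^2=\tilde h^2$, from Corollary~\ref{cor: dense periodic orbits determine second iterate}. This, not a ``flip at gap endpoints'', is why $q=2\min\{p,p'\}$ carries a factor $2$.

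\emph{The cycles of $f$ and $g$ need not coincide.} The cycles of intervals carrying $B$ can differ, and even their periods can differ. For instance $p=2p'$ occurs when the $g$-model is transitive non-mixing and its fixed point corresponds to a gap of $B$. Until the cycles are aligned on $B$, two of your claims are unsupported:
\begin{itemize}
\item that $f^q$ and $g^q$ have the same periodic orbits on $B\cap J$. Equality of sets $\mathcal O_f(x)=\mathcal O_g(x)$ gives $\mathcal O_{f^q}(x)=\mathcal O_{g^q}(x)$ only if both equal $\mathcal O_f(x)\cap J$, with $J\cap B$ a piece of both cycles;
\item that one collapsing map $\pi$ semiconjugates $g^q|_J$ to some $\tilde h$.
\end{itemize}

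The paper supplies exactly this alignment:
\begin{itemize}
\item It first shows $p\in\{p',2p',p'/2\}$, using the gcd of the periods in $B$ and the fact that every interval model has a fixed point.
\item For $p=p'$, it shows that $f|_B$-separators are $g|_B$-separators. Hence $f^{\sigma(k)}J\cap B=g^{\tau(k)}J'\cap B$, and a common collapsing map works together with Corollary~\ref{cor: dense periodic orbits determine second iterate}.
\item For $p=2p'$, it shows each $g$-model is transitive but not mixing, with its unique fixed point the image of the gap containing the intermediate separator. It then identifies the $f$-models with the restrictions of the square of the $g$-model to the two halves, where Theorem~\ref{introthm: mixing maps are determined by period spectrum} applies.
\end{itemize}

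Your final lifting step is fine once these ingredients are in place: $\pi$ is injective off countably many gap images, and density plus continuity finish the argument.
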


As we will see, the preceding theorem is itself a consequence of the more general local result, Theorem~\ref{thm: main theorem on basic sets}: if two maps $f$ and $g$ on a compact interval share a basic set $B$ and have the same periodic orbits on $B$, then there exists $q\in\N$ such that $f^q$ and $g^q$ coincide on $B$.

We further note that Theorem~\ref{introthm: basic sets} implies that two maps with the same periodic orbits must have the same topological entropy. 
In contrast with the piecewise monotone setting \cite{Parry1966,MisiurewiczSzlenk1977,MisiurewiczSzlenk1980,MilnorThurston1988}, we obtain this conclusion without appealing to a priori growth formulae for periodic points, which are well known to break down in general, for instance when there are infinitely many periodic orbits of a given period.

\subsection{Outline}
This article is organised as follows. 
In Section~\ref{sec: left right behaviour determines permutation},
we prove an entirely combinatorial result on permutations $\sigma$
of totally ordered sets $S$ of prime cardinality $p$.
This result states that the \emph{left-right behaviour} of $\sigma$, that is, whether each point
$x\in S$ is mapped to its right or to its left under $\sigma^n$ for 
$n=1,\ldots,p-1$ uniquely determines $\sigma$, provided $\sigma$ is 
a $p$-cycle.  

In our analysis of maps $f$ on $\I$, the role of $S$ is played by 
periodic orbits of prime period, and $\sigma$ is given by 
$\left.f\right|_S$. 
It is then clear that if the union of the prime periodic orbits is dense and the left-right behaviour of $f$ on each of them is known, then $f$ is determined.
In other words, the question of whether the period spectrum determines 
$f$ reduces to whether it determines the left-right behaviour. 
The latter question is settled in 
Section~\ref{sec: periodic points on the interval determine left-right behaviour}, 
where we make use of a technical lemma whose proof we defer to 
Section~\ref{sec: proof of technical lemma}.

In Section~\ref{sec: applications}, we discuss how 
Theorems~\ref{introthm: mixing maps are determined by period spectrum}--\ref{introthm: basic sets}
follow from the more technical results of the previous sections.
Towards proving Theorem~\ref{introthm: dense periodic orbits determine second iterate}, we will need to extend a decomposition result for maps on compact intervals with dense periodic points to arbitrary intervals; see Theorem~\ref{thm: CovenMulvey}.
The details of this extension are given in the appendix.
To contrast these positive results, 
Section~\ref{sec: examples} shows that the period spectrum 
does not 
determine the map in general, as illustrated in 
Figures~\ref{fig: transitive and conjugate maps with same spectrum with reverse tent map}
and \ref{fig: non-transitive and non-conjugate maps with same spectrum}.

\subsection*{\textbf{Declaration of generative AI use.}}
All ideas, conceptual contributions, and proofs are the authors' own and were developed without the use of AI.
The authors used LLMs solely to improve the phrasing and clarity of parts of the text and to assist in creating the figures.

\section{Recovering a permutation from its left-right behaviour}
\label{sec: left right behaviour determines permutation}
Given a self-map $f\colon X\to X$ on a linearly ordered set $X$ and a point $x\in X$, we define the \emph{left-right behaviour} of $x$ to be the sequence $(\ell_n(x))\in\{<,>,=\}^{\N}$ satisfying $f^n(x)\,\ell_n(x)\,x$ for all $n\in\N$. 
Our goal lies in understanding how the left-right behaviour of the periodic points of interval maps determines the combinatorics on the periodic orbits and, ultimately, the maps globally.

As a first step towards this goal, we restrict our attention in this section to single $p$-periodic orbits.
More precisely, we take $f$ to be a \emph{$p$-cycle} on a finite linearly ordered set $x_1<x_2<\cdots<x_p$, meaning that for each 
$i\in\{1,\ldots,p\}$ there exists $0\leq n\leq p-1$ such that 
$f^n(x_1)=x_i$. 
Observe that in this setting, the left-right behaviour of $f$---that is, the left-right behaviour of each point $x_i$---reduces to knowing, for each $n=1,\ldots,p-1$ and $i=2,\ldots,p-1$, whether $f^n(x_i)>x_i$ or $f^n(x_i)<x_i$.

\begin{figure}[h]
    \centering
    \begin{tikzpicture}[
        scale=0.85, point/.style={circle, draw, fill=black, inner sep=1.5pt, minimum size=4pt},
        >=stealth, auto, thick
    ]
        \begin{scope}

        \draw[lightgray] (-0.5,0) -- (6.5,0);

        \node[point] (1a) at (0,0) {};
        \node[point] (2a) at (2,0) {};
        \node[point] (3a) at (4,0) {};
        \node[point] (4a) at (6,0) {};
    
        \draw[->, shorten >=1pt, shorten <=1pt] (1a) to[out=45,in=135] (3a);
        \draw[->, shorten >=1pt, shorten <=1pt] (3a) to[out=160,in=20] (2a);
        \draw[->, shorten >=1pt, shorten <=1pt] (2a) to[out=-25,in=-160] (4a);
        \draw[->, shorten >=1pt, shorten <=1pt] (4a) to[out=-130,in=-35] (1a);

        \end{scope}

        \begin{scope}[xshift=8cm]

        \draw[lightgray] (-0.5,0) -- (6.5,0);

        \node[point] (1b) at (0,0) {};
        \node[point] (2b) at (2,0) {};
        \node[point] (3b) at (4,0) {};
        \node[point] (4b) at (6,0) {};

        \draw[->, shorten >=1pt, shorten <=1pt] (1b) to[out=45,in=140] (4b);
        \draw[->, shorten >=1pt, shorten <=1pt] (4b) to[out=-150,in=-45] (2b);
        \draw[->, shorten >=1pt, shorten <=1pt] (2b) to[out=-20,in=-160] (3b);
        \draw[->, shorten >=1pt, shorten <=1pt] (3b) to[out=160,in=20] (1b);
        \end{scope}
    \end{tikzpicture}
    \caption{Two distinct permutations on four points (indicated as dots), both exhibiting the same left-right behavior (with arrows depicting the combinatorics).}
    \label{fig:example_dist_perm_four_points}
\end{figure}

As Figure~\ref{fig:example_dist_perm_four_points} illustrates, two distinct permutations on four points may share the same left-right behaviour. 
In fact, it is not difficult to see that this extends to all composite $p$: whenever $p$ is not prime, there exist 
distinct permutations on $p$ points $x_1 < \dots < x_p$ with the same left-right behaviour.
However, we have

\begin{lem}\label{lem: left-right determines map}
 Suppose $f$ is a $p$-cycle on a linearly ordered collection of finitely many points $x_1<\ldots< x_p$ with $p$ prime.
 If $g$ is a $p$-cycle on the same points and with the same left-right behaviour as $f$, then $f=g$.
\end{lem}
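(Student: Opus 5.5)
The plan is to encode a $p$-cycle by the \emph{time} at which each point is visited along the orbit of $x_1$. Then the left-right behaviour pins these times down one point at a time, from left to right, and primality of $p$ gives uniqueness at each step.

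Concretely, index time by $\mathbb{Z}/p\mathbb{Z}$. Since $f$ is a $p$-cycle, for each $v\in\{1,\ldots,p\}$ there is a unique $t_v\in\mathbb{Z}/p\mathbb{Z}$ with $f^{t_v}(x_1)=x_v$, and $t_1=0$. The map $f$ is completely determined by $v\mapsto t_v$, since $f(x_v)=x_u$ where $t_u=t_v+1$. For each $v$ we record the left-set
\[
L_v=\{m\in \mathbb{Z}/p\mathbb{Z}\setminus\{0\} : f^m(x_v)<x_v\},
\]
which is read off from the left-right behaviour of $x_v$. Because the orbit of $x_v$ visits every point exactly once in times $0,\ldots,p-1$, we get
\[
L_v=\{t_u-t_v : u<v\},
\qquad\text{equivalently}\qquad
L_v+t_v=\{t_1,\ldots,t_{v-1}\}=:T_{v-1}.
\]
In particular $|L_v|=v-1$. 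The same identities hold for $g$, with its own times $s_v$ but the same sets $L_v$, since $g$ has the same left-right behaviour.

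The main step is an induction on $v$ showing $t_v=s_v$ for all $v$. The base case is $t_1=s_1=0$. Suppose $t_u=s_u$ for all $u<v$. Then both $t_v$ and $s_v$ solve
\[
L_v+t=T_{v-1}\quad\text{in }\mathbb{Z}/p\mathbb{Z}.
\]
If $t_v\neq s_v$, then $L_v+(t_v-s_v)=L_v$. Hence $L_v$ is invariant under translation by a nonzero element $d$ of $\mathbb{Z}/p\mathbb{Z}$. Since $p$ is prime, $d$ generates the whole group, so $L_v$ is a union of cosets of $\mathbb{Z}/p\mathbb{Z}$ itself, i.e.\ $L_v=\emptyset$ or $L_v=\mathbb{Z}/p\mathbb{Z}$. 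The second option is excluded because $0\notin L_v$. The first is excluded for $v\geq 2$ because $|L_v|=v-1\geq 1$. This contradiction gives $t_v=s_v$, closing the induction. Consequently $f$ and $g$ visit the points in the same order, so $f=g$.

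The only point needing care is this uniqueness step, and it is exactly where primality enters. For composite $p$, a proper nonempty subset of $\mathbb{Z}/p\mathbb{Z}$ can be invariant under a nontrivial subgroup, which is consistent with the four-point example in Figure~\ref{fig:example_dist_perm_four_points}. I would also note explicitly that the sets $L_v$ depend only on the left-right data and not on the time labelling, and that existence of a solution $t$ is automatic because $f$ itself supplies one. Beyond that the argument is routine bookkeeping.
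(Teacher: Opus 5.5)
Your proof is correct, and it takes a genuinely different and more economical route than the paper.

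\textbf{The paper's route.} The paper first proves Lemma~\ref{lem: prime periodic orbits}: for $k\neq k'\le i<p$, the points $x_k$ and $x_{k'}$ cannot have the same itinerary with respect to $\{x_1,\dots,x_i\}$. It does this by passing to words of first-return times to $\{x_1,\dots,x_i\}$ and using the Lyndon--Sch\"utzenberger lemma (Lemma~\ref{lem: Lyndon Schuetzenberger}) to extract a nontrivial divisor of $p$. It then inducts on $i$. At each step it reads off the first-return times of $x_{i+1}$ from the left-right data. It then uses that lemma to identify uniquely the point $x_k$ ($k\le i$) on which $x_{i+1}$ first lands, whose return data are already known.

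\textbf{How your argument relates.} In your time coordinates, the itinerary of $x_k$ with respect to $\{x_1,\dots,x_i\}$ is just the translate $T_i-t_k$. So the paper's auxiliary lemma amounts to the statement that the nonempty proper subset $T_i\subseteq\mathbb{Z}/p\mathbb{Z}$ has trivial translation stabiliser. That is exactly the fact you get in one line from the observation that the stabiliser is a subgroup.

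\textbf{What your approach buys.}
\begin{itemize}
\item It replaces combinatorics on words by elementary group theory.
\item It makes explicit that the only property of $p$ used is that $\mathbb{Z}/p\mathbb{Z}$ has no nontrivial proper subgroups.
\item It shows where the composite examples come from. In Figure~\ref{fig:example_dist_perm_four_points} one finds $L_3=\{1,3\}\subseteq\mathbb{Z}/4\mathbb{Z}$, which is invariant under translation by $2$. Hence $L_3+t=T_2=\{0,2\}$ has the two solutions $t=1$ and $t=3$, matching the two permutations.
\item It gives a reconstruction algorithm of order at most $p^3$, as in the paper's remark: at step $v$, test the at most $p$ candidate translates of $L_v$ against $T_{v-1}$.
\end{itemize}
The paper's formulation mainly has the advantage of staying in the language of return times and itineraries used elsewhere in the article. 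For this lemma on its own, your argument is the cleaner one.
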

The proof of Lemma~\ref{lem: left-right determines map} relies on 
an auxiliary statement (Lemma~\ref{lem: prime periodic orbits}), which in turn rests on a classical combinatorial lemma due to Lyndon and Sch\"utzenberger. 
We introduce the necessary notation for the latter before proceeding.

We write $\N^+$ for the collection of finite \emph{words} over $\N$, that is, $\N^+=\bigcup_{n\in\N}\N^n$.
Given two words $u,v\in \N^+$, their concatenation is denoted by $uv$; further, for $m\in \N$, we write $u^m$ for the $m$-fold concatenation of $u$ with itself, that is, $u^m=\underbrace{uu\dots u}_{m\text{-times}}$.

\begin{lem}[{\cite[Lemma~3]{LyndonSchuetzenberger1962}}]\label{lem: Lyndon Schuetzenberger}
Suppose $u,v\in \N^+$ satisfy $uv=vu$.
Then there is $w\in \N^+$ and there are $m_1,m_2\in\N$ such that
$u=w^{m_1}$ and $v=w^{m_2}$.
\end{lem}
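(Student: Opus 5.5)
The statement to prove is the Lyndon–Sch\"utzenberger lemma (Lemma~\ref{lem: Lyndon Schuetzenberger}). I would prove it by strong induction on the total length $|u|+|v|$, using the usual cancellation argument familiar from the Euclidean algorithm.

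\emph{Base case and symmetry.} Since both sides of $uv=vu$ are symmetric in $u$ and $v$, I may assume $|u|\le |v|$. If $|u|=|v|$, comparing the first $|u|$ letters of $uv$ and $vu$ gives $u=v$. In that case we take $w=u$ and $m_1=m_2=1$. This covers in particular the smallest possible total length, $|u|+|v|=2$.

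\emph{Inductive step.} Suppose $|u|<|v|$. The word $uv=vu$ begins with both $u$ and $v$. Since $u$ is the shorter word, $u$ is a prefix of $v$, so we can write $v=uv'$ with $v'\in\N^+$ nonempty. Substituting into $uv=vu$ gives $u\,uv'=uv'\,u$. Cancelling the common prefix $u$ on the left, which is legitimate because concatenation is left-cancellative (compare letter by letter), yields $uv'=v'u$. Moreover $|u|+|v'|<|u|+|v|$. By the induction hypothesis there is $w\in\N^+$ with $u=w^{a}$ and $v'=w^{b}$, where $a,b\ge1$ because $u$ and $v'$ are nonempty. Then $v=uv'=w^{a+b}$, so we may take $m_1=a$ and $m_2=a+b$.

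The only points that need care are bookkeeping ones. First, words in $\N^+$ are nonempty, so we must check that $v'$ is nonempty; this holds because $|u|<|v|$ strictly. Second, $\N$ is taken to exclude $0$ here, and the exponents $m_1,m_2$ produced above are indeed positive. There is no genuine obstacle: the main step is the prefix cancellation, and it is elementary.
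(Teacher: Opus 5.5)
Your proof is correct. The strong induction on $|u|+|v|$ works: the equal-length case gives $u=v$, and the prefix cancellation $v=uv'\Rightarrow uv'=v'u$ strictly reduces the total length. You also rightly check that $v'$ is non-empty and that the exponents are positive; positivity of $m_1$ is what makes $w$ a prefix of $u$, which the paper tacitly uses when it writes $w=s_0s_1\ldots s_m$.

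The paper gives no proof of this lemma and simply quotes it from Lyndon and Sch\"utzenberger, so there is no argument to compare against. Your standard argument is a valid self-contained substitute for that citation.
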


\begin{lem}\label{lem: prime periodic orbits}
 Let $f$ be a $p$-cycle on a linearly ordered collection of  finitely many points $x_1<\ldots< x_p$ with $p$ prime.
For all $i=1,\ldots,p-1$ and each pair $k, k'\in \{1,\ldots,i\}$, we have the following implication
\[
\{\text{for all } n \text{, we have } f^n(x_k)\leq x_i \text{ if and only if } f^n(x_{k'})\leq x_i\} \Longrightarrow k=k'.
\]

\end{lem}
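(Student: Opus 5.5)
The plan is to encode the condition in the premise as a binary word and then invoke Lemma~\ref{lem: Lyndon Schuetzenberger}. Fix $i\in\{1,\ldots,p-1\}$. For each point $x_k$ define the word $w(k)=a_0a_1\cdots a_{p-1}\in\{0,1\}^p\subseteq\N^+$ by $a_n=1$ if $f^n(x_k)\leq x_i$ and $a_n=0$ otherwise. Since $f^p(x_k)=x_k$, the full sequence $(a_n)_{n\geq 0}$ is the infinite periodic repetition of $w(k)$. The hypothesis of the implication, read for all $n\geq 0$, therefore says exactly that $w(k)=w(k')$. (If $n$ ranges only over $\N$ starting at $1$, periodicity still gives the $n=0$ and $n=p$ entries the same value.)

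Now suppose $k\neq k'$ and argue by contradiction. Because $f$ is a $p$-cycle, there is $m$ with $1\leq m\leq p-1$ and $x_{k'}=f^m(x_k)$. Then $w(k')$ is the cyclic shift of $w(k)$ by $m$ positions. Write $w(k)=uv$ with $|u|=m$ and $|v|=p-m$, both nonempty. Then $w(k')=vu$, so the assumption $w(k)=w(k')$ reads $uv=vu$.

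By Lemma~\ref{lem: Lyndon Schuetzenberger} there is a word $z$ with $u=z^{m_1}$ and $v=z^{m_2}$. Hence $|z|$ divides both $m$ and $p-m$, and therefore divides $p$. Since $0<m<p$, we have $|z|\leq m<p$, and primality of $p$ forces $|z|=1$. Thus $w(k)$ is a constant word, all $0$s or all $1$s.

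This contradicts the count of $1$s in $w(k)$. As $n$ runs through $0,\ldots,p-1$, the points $f^n(x_k)$ run through the whole orbit $\{x_1,\ldots,x_p\}$ exactly once. Exactly $i$ of these points satisfy $f^n(x_k)\leq x_i$, namely $x_1,\ldots,x_i$. So $w(k)$ contains exactly $i$ ones and $p-i$ zeros, and since $1\leq i\leq p-1$ it is not constant. Hence $k=k'$. The restriction $k,k'\leq i$ in the statement is not needed for this argument.

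The only step that needs care is correctly identifying the cyclic shift with the factorisation $uv=vu$ and choosing the index range. I do not expect it to be a real obstacle: the argument is the standard fact that a primitive word of prime length has no nontrivial rotational symmetry.
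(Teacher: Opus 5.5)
Your proof is correct, and it takes a noticeably more direct route than the paper.

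The paper works with the first-return times $s_0,\dots,s_{i-1}$ of $x_k$ and $x_{k'}$ to $\{x_1,\dots,x_i\}$. This is a word of length $i$ whose letters sum to $p$. It locates the first index $j_0$ at which one return sequence hits the other's starting point and derives $s_j=s_{j\bmod j_0}$. It then needs a second index $j_1=\ell j_0+r$ and a case split before Lyndon--Sch\"utzenberger produces a block of return times whose sum divides $p$. The cases are: $y_{j_0}=z_0$ and $z_{j_0}=y_0$ simultaneously; $r=0$; and $r\neq 0$.

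You instead record the full itinerary of length $p$ as a binary word. Since $x_{k'}=f^m(x_k)$ with $0<m<p$, the hypothesis becomes invariance of a single word under one nontrivial rotation, i.e.\ $uv=vu$, and primality finishes at once.

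Your encoding gains several things:
\begin{itemize}
\item It removes all the index bookkeeping.
\item It shows that the restriction $k,k'\le i$ is superfluous.
\item It makes explicit where $1\le i\le p-1$ enters, namely the non-constancy of $w(k)$. The paper's divisibility conclusions use this only tacitly: they also need it to exclude $t_{j_0}=1$ in the case $r=0$, or a block of return times summing to $1$.
\end{itemize}
Even Lemma~\ref{lem: Lyndon Schuetzenberger} is more than you need. The rotations fixing $w(k)$ form a subgroup of $\mathbb{Z}/p\mathbb{Z}$ containing $m\neq 0$, hence all of it, so $w(k)$ is constant.

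The paper's return-time formulation has the advantage of matching the data $t^{k,i}_j$ used in the induction for Lemma~\ref{lem: left-right determines map}. However, your hypothesis is immediately equivalent to equality of those return-time sequences, so your version serves that application equally well.

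One cosmetic point: if $\N$ excludes $0$, relabel your alphabet as $\{1,2\}$ before invoking the lemma as stated.
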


\begin{proof}[Proof of Lemma~\ref{lem: prime periodic orbits}]
 Consider $i\in \{1,\ldots,p-1\}$.
 For a contradiction, suppose there are $k\neq k'\in \{1,\ldots,i\}$ such that for all $n$ we have
 $f^n(x_k)\leq x_i$ if and only if $f^n(x_{k'})\leq x_i$.
 We are going to show that this implies that $p$ is composite contrary to the assumptions.

 Let $0=t_0<t_1< \ldots<t_i=p$ be the first return times of $x_k$ (and $x_{k'}$) to $\{x_1,x_2,\ldots,x_i\}$ and set $s_j=t_{j+1}-t_{j}$ for $j=0,\ldots,i-1$.
 Set $y_j=f^{t_{j}}(x_k)$ and
 $z_j=f^{t_{j}}(x_{k'})$.
 It is obvious but important to note that $s_j$ is the (mutual) first return time of $y_j$ and $z_j$ to $\{x_1,x_2,\ldots,x_i\}$.

 Let $j_0\in\{0,\dots,i-1\}$ be the smallest index with
 \begin{align*}
  y_{j_0}=z_0(=x_{k'}) \qquad \text{or} \qquad z_{j_0}=y_0(=x_k).
 \end{align*}
 As $k\neq k'$ by assumption, $j_0>0$.
Note that if $y_{j_0}=z_0$ and $z_{j_0}=y_0$---that is, if $f^{t_{j_0}}(x_k)=x_{k'}$ and
$f^{t_{j_0}}(x_{k'})=x_k$---then $p=2 t_{j_0}$ in contradiction to $p$ being prime.

We may hence assume without loss of generality that $j_0$ satisfies $y_{j_0}=z_0$ while $z_{j_0}\neq y_0$.
Observe that since $y_{j_0}=z_0$, we have that $s_{j_0}=s_0$ and $f^{s_{j_0}}(y_{j_0})=z_1$.
Consequently, $s_{j_0+1}=s_1$ and $f^{s_{j_0+1}}(y_{j_0+1})=z_2$ and, continuing in this way, we obtain
\begin{align}\label{eq: recursive relation return times}
s_{j_0}=s_0,s_{j_0+1}=s_1,
\ldots, s_{j}=s_{j \bmod j_0},
\ldots, s_{i-1}=s_{(i-1) \bmod j_0}.
\end{align}
Let $j_1=\ell j_0+r$ be the smallest index such that $z_{j_1}=y_0$, where $r\in \{0,\ldots,j_0-1\}$.
Note that if $r=0$, then $p=(\ell+1)t_{j_0}$ in contradiction to the assumption that $p$ is prime.

Suppose hence that $r\neq 0$.
Note that, on the one hand, $y_1=f^{s_0}(y_0)$ while on the other hand,
\[y_1=z_{j_1+1}=f^{s_{j_1}}(z_{j_1})=f^{s_{\ell j_0 +r}}(z_{j_1})\stackrel{\eqref{eq: recursive relation return times}}{=}f^{s_r}(z_{j_1})=f^{s_r}(y_0)\] so that similarly to \eqref{eq: recursive relation return times}, we obtain
\[s_r=s_{0},s_{r+1}=s_{1},\ldots, s_{j_0-1}=s_{j_0-r-1}, s_{0}=s_{j_0-r},s_1=s_{j_0-r+1},\ldots,s_{r-1}=s_{j_0-1}.\]
Put differently, the words $u=s_0s_1\ldots s_{r-1}$ and $v=s_rs_{r+1}\ldots s_{j_0-1}$ in $\N^+$ satisfy $uv=vu$.
From Lemma~\ref{lem: Lyndon Schuetzenberger}, we hence obtain $m,m_1,m_2$ such that $w=s_0s_1\ldots s_{m}$ satisfies $u=w^{m_1}$ and $v=w^{m_2}$.
As $p=(\ell+1)\cdot (s_0+s_1+\ldots+s_{j_0-1})+(s_0+s_1+\ldots+s_{r-1})$, this implies $(s_0+s_1+\ldots +s_{m}) \, |\, p$ in contradiction to $p$ being prime.
\end{proof}
We now turn to the proof of Lemma~\ref{lem: left-right determines map}. 
Throughout this proof, we say that a quantity associated to $f$ 
(in particular, certain return times) is \emph{uniquely determined by the left-right behaviour} to mean that any permutation $g$ with the same left-right behaviour as $f$ assigns the same value to that quantity.
\begin{proof}[Proof of Lemma~\ref{lem: left-right determines map}]
Given $i\in\{1,\ldots,p\}$
and $k\leq i$, let $0=t^{k,i}_0<t_1^{k,i}<\ldots <t_{i-1}^{k,i}<p$ be the first return times of
$x_k$ to $\{x_1,x_2,\ldots,x_{i}\}$ and let $\ell^{k,i}_j$ be such that $f^{t_{j}^{k,i}}(x_k)=x_{\ell^{k,i}_j}$ (where $j\in\{0,\ldots,i-1\}$).
Note that knowing $t_j^{k,i}$ and ${\ell^{k,i}_j}$ for all $k\leq i$ and $j\leq i-1$ amounts to knowing
$t_j^{i,i}$ and ${\ell^{i,i}_j}$ for all $j\leq i-1$.
By a finite induction over $i$, we next show that, for each $i$, the times
$t_j^{i,i}$ and the indices ${\ell^{i,i}_j}$
are uniquely determined by the left-right behaviour of $f$.
This will finish the proof.

The base case is trivial (with $t_0^{1,1}=0$ and $\ell_0^{1,1}=1$).
Suppose that for some $i$ and for all $k\leq i$ and $j\leq i-1$, 
the times $t^{k,i}_j$ and indices $\ell^{k,i}_j$ are uniquely 
determined by the left-right behaviour of $f$.
Now, note that also the times $t_j^{i+1,i+1}$ are uniquely determined by the left-right behaviour of $f$: $t_0^{i+1,i+1}<t_1^{i+1,i+1}<\dots<t_i^{i+1,i+1}$ coincide with exactly those times $t$ where $f^t(x_{i+1})\leq x_{i+1}$.
Observe further that since $f^{t_1^{i+1,i+1}}(x_{i+1})\leq x_i$,
there necessarily is some $k\leq i$ with
$t_{j-1}^{k,i}=t_{j}^{i+1,i+1}-t_{1}^{i+1,i+1}$ for all $j=1,\ldots,i$.
By Lemma~\ref{lem: prime periodic orbits},
$k$ is unique.
Using the inductive hypothesis, it follows that also ${\ell^{i+1,i+1}_j}$ (for $j\leq i$) is uniquely determined by the left-right behaviour of $f$; specifically, $\ell_0^{i+1,i+1}=i+1$ and $\ell_j^{i+1,i+1}=\ell_{j-1}^{k,i}$ for $j\geq 1$.
\end{proof}
\begin{rem}
Given Lemma~\ref{lem: left-right determines map}, one can in principle recover the permutation $f$ algorithmically by exhausting all $p$-cycles and identifying the unique one whose left-right behaviour coincides with that of $f$. 
Since there are $(p-1)!$ distinct $p$-cycles, this brute-force approach therefore comes with factorial computational complexity. 
The proof above, however, yields a considerably more efficient algorithm of complexity of order $p^3$.
\end{rem}

\section{Periodic orbits on the interval determine the left-right behaviour}
\label{sec: periodic points on the interval determine left-right behaviour}
In this section and throughout the article, $\I$ always denotes an interval in $\R$; it may be open, closed, or half-open.
By interval, we always mean a \emph{non-degenerate} interval unless explicitly stated otherwise.
If we wish to specify the left and right endpoints of $\I$, we may write $\I=\llbracket \0,\1\rrbracket$, where the left bracket $\llbracket$ is either $($ or $[$, the right bracket $\rrbracket$ is either $)$ or $]$, $\0\in\R\cup \{-\infty\}$, and $\1\in \R\cup \{\infty\}$ with $\1>\0$.
By an endomorphism on $\I$, we mean a continuous self-map on $\I$.
At times, we may refer to properties of such an endomorphism at the boundary points $\0$ or $\1$.
In such a case, if the respective boundary point is not an element of $\I$, for example if $\I=(-\infty,\infty)$, then the corresponding statements are understood to be vacuously true.

In Section~\ref{sec: examples}, we provide examples showing that the collection of periodic orbits need not determine an endomorphism on $\I$, even under the assumption that periodic points are dense.
That is, we give distinct continuous maps with the same dense collection of periodic orbits.
In those examples, however, each periodic point has the same left-right behaviour under the respective maps.
In this section, we prove that this is a general fact:
Theorem~\ref{thm: periodic orbits determine left-right behaviour} below shows that the left-right behaviour of each periodic point is fully determined by the collection of periodic orbits. 
Together with Lemma~\ref{lem: left-right determines map} from the previous section, this shows that two endomorphisms on $\I$ with the same periodic orbits must coincide whenever prime periodic points are dense; 
note that this is, in particular, the case for topologically mixing maps on $\I$, see Corollary~\ref{cor: mixing maps are fully determined by periodic orbits}.

To illustrate the idea behind 
Theorem~\ref{thm: periodic orbits determine left-right behaviour}, let us briefly restrict attention to  maps $f$ on the unit interval $\I=[0,1]$ with dense periodic points, only finitely many periodic orbits of any given period, and all periodic points \emph{stable} under perturbations: that is, we assume that for each $p\in \N$, every $p$-periodic point corresponds to a crossing of the diagonal by the graph of $f^p$.

To see that, in this situation, the left-right behaviour is determined by the periodic orbits, first note that the assumptions of finiteness and denseness together imply that, if $0$ is a fixed point of $f^p$, then the graph of $f^p$ must lie above the diagonal to the right of $0$; otherwise, $0$ would either be accumulated by fixed points of $f^p$---violating the finiteness assumption---or $0$ would be attracting for $f^p$, contradicting the denseness of periodic points.
As a consequence, determining the left-right behaviour of a $p$-periodic point $x$ of $f$, for any $p>2$, amounts to counting the crossings of the diagonal by the graph of $f^q$ to the left of $x$ for each $q<p$: if the number of such crossings is even, then $x$ is mapped to its right under $f^q$ (as its graph lies above the diagonal at position $x$), and to its left otherwise.  
However, each crossing corresponds to a periodic point and, under 
the stability assumption, every periodic point gives rise to a 
crossing. 
Consequently, the left-right behaviour of any periodic point is 
entirely determined by the positions of the periodic points of 
smaller period.

Note also that if \( f \) has dense prime points, then combining the algorithm from Lemma~\ref{lem: left-right determines map} with the preceding discussion yields an algorithm for approximating \( f \) based on the knowledge of its periodic orbits.

While the above captures the situation for generic maps, the general case is more involved. 
First, the possible existence of infinitely many fixed points 
precludes a simple counting of crossings even if $\I=[0,1]$. 
Furthermore, periodic points need not all be stable in general, so the one-to-one correspondence between crossings and periodic points may fail.
To address these complications, we need a careful analysis of the local behaviour of $f^{ip}$ around all $p$-periodic points for all $i,p\in \N$.
To that end, we to introduce some notation and terminology.

Given an endomorphism $f$ on $\I$, we write $\Fix_p(f)$ for the collection of fixed points of $f^p$.
Note that we call $x\in \I$ \emph{$p$-periodic (under $f$)} if $x\in \Fix_p(f)$ and $p$ is minimal with this property.
In the following, by \emph{neighbourhood} of a point $x\in \I$ we refer to an \emph{interval} that contains $x$ in its interior.

First, consider $x\in \d\Fix_p(f)\setminus \{\0,\1\}$.
We say $x$ is of \emph{type $(+,+)$ (under $f^p$)} if
$f^p(y)\geq y$ on a neighbourhood of $x$; we may call such neighbourhood also a $(+,+)$-neighbourhood of $x$ (under $f^{p}$); see Figure \ref{fig:local_behaviour_plus_plus} for an illustration of corresponding local behaviour. 
We say $x$ is of type $(+,-)$ under $f^p$ if there is a neighbourhood $U$ of $x$ such that
$f^p(y)\geq y$ for all $y\in U$ to the left of $x$ and $f^p(y)\leq y$ for all $y\in U$ to the right of $x$ and such that there are $y_-<x<y_+$ in $U$ with $f^p(y_-)>y_-$ and $f^p(y_+)<y_+$; we may call the respective set $U$ a $(+,-)$-neighbourhood of $x$ (under $f^p$).
The types $(-,-)$ and $(-,+)$  (and the corresponding neighbourhoods) are defined analogously.

It is noteworthy that we explicitly do not assume the respective
properties to be satisfied on sufficiently small neighbourhoods; for only that way, the above terminology also applies meaningfully to boundary points of $\mathring\Fix_p(f)$.
Note further that the above types are not mutually exclusive: for example, $x$ may admit a $(+,-)$-neighbourhood with a proper subset that serves as a $(+,+)$-neighbourhood, see for instance the left sketch in Figure \ref{fig:local_behaviour_plus_minus_plus_pm}.
However, $x$ cannot simultaneously be of type $(+,+)$ and of type $(-,-)$ under $f^p$ since $x$ is in the boundary of $\Fix_p(f)$.

\begin{figure}[h]
    \begin{tikzpicture}
        \begin{scope}
            \draw[lightgray] (-2,0) -- (2,0);
            
            \draw[thick] (-1.5,1) -- (0,0) -- (1.5,1);
            
            \fill (0,0) circle (2pt);
        \end{scope}
        
        \begin{scope}[xshift=4.5cm]
            \draw[lightgray] (-2,0) -- (2,0);
            
            \draw[thick] (-1.5,0) -- (0,0) -- (1.5,1);
            
            \fill (0,0) circle (2pt);
        \end{scope}
        
        \begin{scope}[xshift=9.0cm]
            \draw[lightgray] (-2,0) -- (2,0);
            
            \draw[thick] (-1.5,1) -- (0,0) -- (1.5,0);
            
            \fill (0,0) circle (2pt);
        \end{scope}
    \end{tikzpicture}
    \caption{
        Possible local behaviour of the map $f^p-\Id$ on a $(+,+)$-neighbourhood of a fixed point in $\d\Fix_p(f)\setminus \{\0,\1\}$ (indicated as a dot).
    }
    \label{fig:local_behaviour_plus_plus}
\end{figure}

Still assuming that $x\in \partial \Fix_p(f)\setminus \{\0,\1\}$, we say $x$ is of type $(+,\pm)$ under $f^p$ if it is neither of type $(+,+)$ nor $(+,-)$ and if there is
a neighbourhood $U$ of $x$ with $f^p(y)\geq y$ for all $y\in U$ to the left of $x$ and such that there are $y_-<x<y_+^+,y_+^-$ in $U$ with $f^p(y_-)>y_-$, $f^p(y_+^+)>y_+^+$ and $f^p(y_+^-)<y_+^-$; we may call $U$ a $(+,\pm)$-neighbourhood of $x$ (under $f^p$).
Note that in this situation, the points $y_+^+$ and $y_+^-$ can be chosen arbitrarily close to $x$.
The types $(\pm,+)$, $(-,\pm)$, and $(\pm,-)$ as well as the corresponding neighbourhoods are defined analogously.

\begin{figure}[h]
    \begin{tikzpicture}
        \begin{scope}
            \draw[lightgray] (-2,0) -- (2,0);
            
            \draw[thick] (-1.5,1) -- (0,0) -- (0.8,0) -- (1.5,-0.9);
            
            \fill (0,0) circle (2pt);
        \end{scope}
        
        \begin{scope}[xshift=6cm]
            \draw[lightgray] (-2,0) -- (2,0);
            
            \draw[thick] (-1.5,1.0) -- (-0.45,0);
            \fill[black] (-0.4,0.1) -- (-0.5,-0.08) -- (-0.3,-0.08) -- cycle;
            \draw[thick] (-0.4,0) -- (0,0);
            
            \draw[thick] (0,0)  -- (0.001913, -0.0000)  -- (0.007653, 0.0000)  -- (0.017219, -0.0001)  -- (0.030612, 0.0005)  -- (0.047832, -0.0011)  -- (0.068878, 0.0024)  -- (0.093750, -0.0044)  -- (0.122449, 0.0075)  -- (0.154974, -0.0120)  -- (0.191327, 0.0183)  -- (0.231505, -0.0268)  -- (0.275510, 0.0380)  -- (0.323342, -0.0523)  -- (0.375000, 0.0703)  -- (0.430485, -0.0927)  -- (0.489796, 0.1200)  -- (0.552934, -0.1529)  -- (0.619898, 0.1921)  -- (0.690689, -0.2385)  -- (0.765306, 0.2928)  -- (0.843750, -0.3560)  -- (0.926020, 0.4288)  -- (1.012117, -0.5122)  -- (1.102041, 0.6072)  -- (1.195791, -0.7150)  -- (1.293367, 0.8364)  -- (1.394770, -0.9727);
            
            \fill (0,0) circle (2pt);
        \end{scope}
    \end{tikzpicture}
    
    \caption{Possible local behaviour of $f^p - \Id$ on a $(+,-)$-neighbourhood (left) and a $(+,\pm)$-neighbourhood (right) of a fixed point in $\partial \Fix_p(f) \setminus \{\0,\1\}$ (indicated by a dot). 
    Note that the marked fixed point in the left sketch also admits a $(+,+)$-neighbourhood; the second fixed point in the right sketch (indicated by a triangle) is of type $(+,+)$ and of no other type.
}
    \label{fig:local_behaviour_plus_minus_plus_pm}    
\end{figure}

Finally, we say $x\in \partial\Fix_p(f)\setminus \{\0,\1\}$ is of type $(\pm,\pm)$ if it is of none of the above kinds.
We remark that, if $x$ is of type $(\pm,\pm)$, it may happen that
$\llbracket\0,x]\ssq \Fix_p(f)$ (or $[x,\1\rrbracket\ssq \Fix_p(f)$); see Figure~\ref{fig:local_behaviour_pm_pm}.
Although this is slightly at odds with the notation, it does not affect the subsequent discussion.
Similarly, it may happen that $x$ is accumulated from, say, the left by points where the graph of $f^p$ lies above the diagonal and by points where it lies below the diagonal, while also being the left endpoint of an interval in $\Fix_p(f)$ whose right endpoint $y$ is accumulated from the right by points where the graph of $f^p$ lies above the diagonal and by points where it lies below the diagonal (in that case, $y$ is also of type $(\pm,\pm)$).
If $x$ is of type $(\pm,\pm)$ and neither of these alternatives holds, however, then $x$ is accumulated from both sides by points where $f^p$ lies strictly above and strictly below the diagonal.

\begin{figure}[h]
    \begin{tikzpicture}
        \begin{scope}
            \draw[lightgray] (-2,0) -- (2,0);

            \draw[thick] (0,0)  -- (-0.001913, -0.0000)  -- (-0.007653, 0.0000)  -- (-0.017219, -0.0001)  -- (-0.030612, 0.0005)  -- (-0.047832, -0.0011)  -- (-0.068878, 0.0024)  -- (-0.093750, -0.0044)  -- (-0.122449, 0.0075)  -- (-0.154974, -0.0120)  -- (-0.191327, 0.0183)  -- (-0.231505, -0.0268)  -- (-0.275510, 0.0380)  -- (-0.323342, -0.0523)  -- (-0.375000, 0.0703)  -- (-0.430485, -0.0927)  -- (-0.489796, 0.1200)  -- (-0.552934, -0.1529)  -- (-0.619898, 0.1921)  -- (-0.690689, -0.2385)  -- (-0.765306, 0.2928)  -- (-0.843750, -0.3560)  -- (-0.926020, 0.4288)  -- (-1.012117, -0.5122)  -- (-1.102041, 0.6072)  -- (-1.195791, -0.7150)  -- (-1.293367, 0.8364)  -- (-1.394770, -0.9727);

            \draw[thick] (0,0)  -- (0.001913, -0.0000)  -- (0.007653, 0.0000)  -- (0.017219, -0.0001)  -- (0.030612, 0.0005)  -- (0.047832, -0.0011)  -- (0.068878, 0.0024)  -- (0.093750, -0.0044)  -- (0.122449, 0.0075)  -- (0.154974, -0.0120)  -- (0.191327, 0.0183)  -- (0.231505, -0.0268)  -- (0.275510, 0.0380)  -- (0.323342, -0.0523)  -- (0.375000, 0.0703)  -- (0.430485, -0.0927)  -- (0.489796, 0.1200)  -- (0.552934, -0.1529)  -- (0.619898, 0.1921)  -- (0.690689, -0.2385)  -- (0.765306, 0.2928)  -- (0.843750, -0.3560)  -- (0.926020, 0.4288)  -- (1.012117, -0.5122)  -- (1.102041, 0.6072)  -- (1.195791, -0.7150)  -- (1.293367, 0.8364)  -- (1.394770, -0.9727);
            
            \fill (0,0) circle (2pt);
        \end{scope}

        \begin{scope}[xshift=4.5cm]
            \draw[lightgray] (-2,0) -- (2,0);

            \draw[thick] (-0.250000,0)  -- (-0.251594, -0.0000)  -- (-0.256378, 0.0000)  -- (-0.264349, -0.0002)  -- (-0.275510, 0.0005)  -- (-0.289860, -0.0012)  -- (-0.307398, 0.0024)  -- (-0.328125, -0.0045)  -- (-0.352041, 0.0077)  -- (-0.379145, -0.0123)  -- (-0.409439, 0.0188)  -- (-0.442921, -0.0275)  -- (-0.479592, 0.0390)  -- (-0.519452, -0.0537)  -- (-0.562500, 0.0723)  -- (-0.608737, -0.0952)  -- (-0.658163, 0.1233)  -- (-0.710778, -0.1571)  -- (-0.766582, 0.1975)  -- (-0.825574, -0.2452)  -- (-0.887755, 0.3010)  -- (-0.953125, -0.3658)  -- (-1.021684, 0.4407)  -- (-1.093431, -0.5264)  -- (-1.168367, 0.6241)  -- (-1.246492, -0.7348)  -- (-1.327806, 0.8596)  -- (-1.412309, -0.9997);
            
            \draw[thick] (0.250000,0)  -- (0.251594, -0.0000)  -- (0.256378, 0.0000)  -- (0.264349, -0.0002)  -- (0.275510, 0.0005)  -- (0.289860, -0.0012)  -- (0.307398, 0.0024)  -- (0.328125, -0.0045)  -- (0.352041, 0.0077)  -- (0.379145, -0.0123)  -- (0.409439, 0.0188)  -- (0.442921, -0.0275)  -- (0.479592, 0.0390)  -- (0.519452, -0.0537)  -- (0.562500, 0.0723)  -- (0.608737, -0.0952)  -- (0.658163, 0.1233)  -- (0.710778, -0.1571)  -- (0.766582, 0.1975)  -- (0.825574, -0.2452)  -- (0.887755, 0.3010)  -- (0.953125, -0.3658)  -- (1.021684, 0.4407)  -- (1.093431, -0.5264)  -- (1.168367, 0.6241)  -- (1.246492, -0.7348)  -- (1.327806, 0.8596)  -- (1.412309, -0.9997);
            
            \draw[thick] (-0.25,0)  -- (0.25,0);

            \fill (-0.25,0) circle (2pt);
            \fill (0.25,0) circle (2pt);
        \end{scope}

        \begin{scope}[xshift=9cm]
            \draw[lightgray] (-2,0) -- (2,0);

            \draw[thick] (1.5,0) -- (0,0)  -- (-0.001913, -0.0000)  -- (-0.007653, 0.0000)  -- (-0.017219, -0.0001)  -- (-0.030612, 0.0005)  -- (-0.047832, -0.0011)  -- (-0.068878, 0.0024)  -- (-0.093750, -0.0044)  -- (-0.122449, 0.0075)  -- (-0.154974, -0.0120)  -- (-0.191327, 0.0183)  -- (-0.231505, -0.0268)  -- (-0.275510, 0.0380)  -- (-0.323342, -0.0523)  -- (-0.375000, 0.0703)  -- (-0.430485, -0.0927)  -- (-0.489796, 0.1200)  -- (-0.552934, -0.1529)  -- (-0.619898, 0.1921)  -- (-0.690689, -0.2385)  -- (-0.765306, 0.2928)  -- (-0.843750, -0.3560)  -- (-0.926020, 0.4288)  -- (-1.012117, -0.5122)  -- (-1.102041, 0.6072)  -- (-1.195791, -0.7150)  -- (-1.293367, 0.8364)  -- (-1.394770, -0.9727);
    
            \fill (0,0) circle (2pt);
    
            \draw (1.5,-0.1) -- (1.5,0.1) node[below=5pt]{$\1$};
        \end{scope}
    
    \end{tikzpicture}
    
    \caption{Possible local behaviour of the map $f^p-\Id$ on a $(\pm,\pm)$-neighbourhood of a fixed point in $\d\Fix_p(f)\setminus \{\0,\1\}$ (indicated by a dot).}
    \label{fig:local_behaviour_pm_pm}    
\end{figure}

The above types cover $\partial \Fix_p(f)\setminus \{\0,\1\}$ but, as noted before, are not mutually exclusive.
For brevity, we may refer to $x$ as being of type $(\ast,+)$
if $x$ is of any of the types $(+,+)$, $(-,+)$ or $(\pm,+)$ (where only the second coordinate is determined).
The types $(\ast,-)$, $(\ast,\pm)$,
$(+,\ast)$, $(-,\ast)$ and $(\pm,\ast)$ are defined accordingly.

Now, consider $x\in \mathring\Fix_p(f)$, and let $\llbracket a,b\rrbracket$ be the connected component of $\Fix_p(f)$ containing $x$. 
We say $x$ is of type $(+,+)^l$ if $a>\0$ 
and $a$ is of type $(+,+)$.
Similarly, we 
say $x$ is of type $(+,+)^r$ if $b<\1$ 
and $b$ is of type $(+,+)$.
The types $(-,+)^r$, $(-,\pm)^r$, 
$(-,+)^l$, $(-,\pm)^l$, etc.\ are defined analogously. 
As with the types introduced for $\partial\Fix_p(f)\setminus\{\0,\1\}$, the above covers $\mathring\Fix_p(f)\setminus\{\0,\1\}$---assuming $f^p\neq \Id$---but the types are not mutually exclusive.

To understand the left-right behaviour of points under $f$, we must determine whether the graphs of the iterates $f^p$ lie above or below the diagonal near fixed points of $f^p$, and what this implies for higher powers $f^{ip}$ of these iterates.
To that end, we need the following statement, where items \ref{item: types of fixed points 1}--\ref{item: types of fixed points 3} address the preservation of types, while the remaining parts describe how the type of a point \( x \in \partial\Fix_p(f) \) under \( f^p \) may differ from its type under \( f^{ip} \) for \( i > 1 \).
The proof is, in principle, straightforward but tedious and would detract from the 
main ideas behind Theorem~\ref{thm: periodic orbits determine left-right behaviour}; 
we therefore defer it to the final section.

\begin{lem}\label{lem: local type of fixed points}
 Let $f\: \I\to \I$ be continuous and consider $x\in \Fix_p(f)\setminus \{\0,\1\}$ with $p\in\N$.
 \begin{enumerate}[label=(\roman*),leftmargin=*]
  \item
  If $x$ is of type $(\alpha,\beta)^r$ [resp.\ $(\alpha,\beta)^l$] under $f^p$ for some $\alpha,\beta \in \{+,-,\pm\}$, then $x$ is of type $(\alpha,\beta)^r$ [resp.\ $(\alpha,\beta)^l$] under $f^{ip}$ for every $i\in \N$.
  \label{item: types of fixed points 1}
  \item If $x$ is of type $(-,-)$  [resp.\ $(+,+)$] under $f^p$, then $x$ is of type $(-,-)$ [resp.\ $(+,+)$] under $f^{ip}$ for every $i\in \N$. \label{item: types of fixed points 2}
 \item If $x$ is of type $(\ast,+)$ [resp.\ $(-,\ast)$] under $f^p$, then $x$ is of type $(\ast,+)$ [resp.\ $(-,\ast)$] under $f^{ip}$ for every $i\in \N$. \label{item: types of fixed points 3}
\item
   If $x$ is of type $(\pm,-)$, then $x$ is of type $(\pm,-)$ or $(\pm,\pm)$ under $f^{ip}$ for every $i\in \N$.
   \label{item: types of fixed points 4}
   \item  If $x$ is of type $(+,\pm)$, then $x$ is of type $(+,\pm)$ or $(\pm,\pm)$ under $f^{ip}$ for every $i\in \N$.
   \label{item: types of fixed points 5}
  \item
  If $x$ is of type $(+,-)$, then either\label{item: types of fixed points 6}
   \begin{itemize}
    \item[(a)] $x$ is of type $(\ast,-)$ under $f^{ip}$ for every $i\in \N$ or
    \item[(b)] $x$ is of type $(\ast, -)$ under $f^{ip}$ for every odd $i\in \N$ and of type $(\ast, +)$ under $f^{ip}$ for every even $i\in \N$ or 
    \item[(c)] $x$ is
     of type $(\ast, -)$ or $(\ast,\pm)$ under $f^{ip}$ for every odd $i\in \N$ and
    of type $(\ast,\pm)$ under $f^{ip}$ for every even $i\in \N$ or
    \item[(d)] 
    $x$ is of type $(\ast, -)$ under $f^{ip}$ for every odd $i\in \N$ and $x\in \mathring\Fix_{i p}(f)$ for every even $i\in \N$.
    \end{itemize}
    \item
  If $x$ is of type $(+,-)$, then---focusing on the first coordinate---we analogously get that either\label{item: types of fixed points 7}
   \begin{itemize}
    \item[(a)] $x$ is of type $(+,\ast)$ under $f^{ip}$ for every $i\in \N$ or
    \item[(b)] $x$ is of type $(+, \ast)$ under $f^{ip}$ for every odd $i\in \N$ and of type $(-,\ast)$ under $f^{ip}$ for every even $i\in \N$ or
    \item[(c)] $x$ is
     of type $(+,\ast)$ or $(\pm,\ast)$ under $f^{ip}$ for every odd $i\in \N$ and
    of type $(\pm,\ast)$ under $f^{ip}$ for every even $i\in \N$ or
    \item[(d)] $x$ is of type $(+,\ast)$ under $f^{ip}$ for every odd $i\in \N$ and $x\in \mathring\Fix_{i p}(f)$ for every even $i\in \N$.
   \end{itemize}
 \end{enumerate}
\end{lem}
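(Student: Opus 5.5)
Set $F=f^p$, so that $x$ is a fixed point of $F$. Every item is a statement about the sign of $F^i(y)-y$ for $y$ close to $x$ on one side, given the sign of $F(y)-y$ on that side. The whole proof rests on one elementary observation, which I will prove first.

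\emph{Monotonicity principle.} Suppose $F(y)\ge y$ for all $y$ in some interval $J=[x,c)$, and suppose the forward orbit of a point stays in $J$. Then its $F^i$-images increase weakly. More generally, if $y$ lies in $J$, then either $F^i(y)\ge y$ for all $i$, or the orbit leaves $J$ to the right before returning. By continuity of $F$ at $x$, points close enough to $x$ on the right are mapped close to $x$. So on a small one-sided neighbourhood the sign information can be propagated along orbits. The difficulty is that $F$ may send right-side points to the left side of $x$, since $F$ may be orientation-reversing near $x$. The four alternatives in \ref{item: types of fixed points 6} record exactly this.

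\textbf{Items \ref{item: types of fixed points 1}--\ref{item: types of fixed points 3}.}
\begin{itemize}
\item For \ref{item: types of fixed points 1}: an interval component $\llbracket a,b\rrbracket$ of $\Fix_p(f)$ lies inside $\Fix_{ip}(f)$. So the relevant endpoint, say $b$, is either still an endpoint of the component of $\Fix_{ip}(f)$ containing $x$, or that component has grown. I will rule out growth by showing that $b$ keeps its type under $F^i$; this reduces to items \ref{item: types of fixed points 2} and \ref{item: types of fixed points 3} applied at $b$. The argument for the left endpoint $a$ is symmetric.
\item For the $(+,+)$ case of \ref{item: types of fixed points 2}: take a $(+,+)$-neighbourhood $U$ on which $F(y)\ge y$. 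Shrink to $V\subset U$ with $F(V)\subset U$ and apply the principle. A point $y\in V$ with $F^i(y)<y$ would need an orbit segment moving right and then jumping to the left of $y$. I claim this forces a point $z\in U$ with $F(z)<z$: take the first time the orbit drops below $y$ and use the intermediate value theorem on $F-\Id$ between orbit points. That contradicts $F\ge\Id$ on $U$.
\item I also need the existence of strict points $F^i(y)>y$ on each side, so that $x$ stays in $\partial\Fix_{ip}$. These are obtained from the strict points $y_\pm$ given by the type: $F(y_+)>y_+$, and as long as the orbit stays in $U$ it cannot come back.
\item Item \ref{item: types of fixed points 3} is the one-sided version of the same argument, using only a right neighbourhood $[x,c)$ where $F\ge \Id$.
\end{itemize}

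\textbf{Items \ref{item: types of fixed points 4} and \ref{item: types of fixed points 5}.} By \ref{item: types of fixed points 3}, the coordinate with a definite sign ($+$ on the left, or $-$ on the right) is preserved in the weak sense. The $\pm$ coordinate means $x$ is accumulated on that side by points with $F(y)>y$ and by points with $F(y)<y$. Between consecutive such points there are fixed points of $F$, hence fixed points of $F^i$. So under $F^i$ this side is either still oscillating ($\pm$) or degenerate, and that yields the stated dichotomy.

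\textbf{Items \ref{item: types of fixed points 6} and \ref{item: types of fixed points 7}; this is the main obstacle.} Here $F\ge\Id$ on the left and $F\le\Id$ on the right, so points near $x$ are pushed towards $x$. I will split according to local orientation:
\begin{itemize}
\item \emph{$F$ does not map right-near points to the left of $x$.} Then the principle applies on each side separately, and we get (a).
\item \emph{$F$ flips sides near $x$.} Then, for $y$ slightly right of $x$, $F^2(y)$ is compared with $y$ via the composition. Define $\phi(t)=F(F(x+t))-(x+t)$ for small $t>0$. Depending on whether $\phi$ is $\ge0$ throughout, identically $0$, or oscillating, we land in (b), (d), or (c). Odd iterates inherit the sign of $F$ itself, by applying the even case to $F^{i-1}$ and then one more application of $F$ on a sign-controlled side. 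Even iterates inherit the sign of $F^2$, by \ref{item: types of fixed points 2} and \ref{item: types of fixed points 3} applied to $F^2$.
\item \emph{Mixed situations}, where $F$ flips some but not all nearby points. These are the hardest part and need care. My first attempt is to use continuity and the intermediate value theorem: any such point produces a nearby fixed point of $F$ or of $F^2$. Such fixed points either accumulate at $x$, which gives the $\pm$ alternatives, or do not, which returns us to one of the clean cases on a smaller neighbourhood.
\end{itemize}

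Item \ref{item: types of fixed points 7} follows from item \ref{item: types of fixed points 6} by the reflection $y\mapsto -y$.
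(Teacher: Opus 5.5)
Your proposal breaks down on the ``attracting'' sides of $x$ ($+$ on the left, $-$ on the right), which is where the lemma has its content. I write $F=f^p$ as you do.

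In items \ref{item: types of fixed points 4} and \ref{item: types of fixed points 5} you invoke \ref{item: types of fixed points 3} to preserve the definite coordinate. But \ref{item: types of fixed points 3} only concerns $(\ast,+)$ and $(-,\ast)$, i.e.\ the repelling sides. An attracting side can flip under $F^2$: take $F(y)=x-3(y-x)$ for $y>x$ and $F(y)=x-\tfrac12(y-x)$ for $y<x$, which is alternative (b) of \ref{item: types of fixed points 6}. For type $(\pm,-)$, what prevents the flip is the $\pm$ on the \emph{other} side. Since $x$ is accumulated from the left by fixed points of $F$, there are two cases:
\begin{itemize}
\item If some $y>x$ near $x$ has $F(y)\le x$, the intermediate value theorem on $[x,y]$ gives $y'\in(x,y]$ such that $F(y')\le x$ is a fixed point of $F$. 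Hence $F^i(y')=F(y')<y'$ for all $i$.
\item Otherwise $F$ maps a right half-neighbourhood $[x,y_+]$ into itself, so $F^i\le\Id$ there and $F^i(y_+)<y_+$.
\end{itemize}
You must also show that the $\pm$ side stays genuinely $\pm$; a ``degenerate'' side would contradict the conclusion. The argument: in a component of $\{F<\Id\}$ [resp.\ $\{F>\Id\}$] near $x$, points close to its right [resp.\ left] endpoint, which is fixed, remain in that component for $i$ steps. The same issue affects \ref{item: types of fixed points 1}. For endpoint types such as $(+,-)$, neither \ref{item: types of fixed points 2} nor \ref{item: types of fixed points 3} applies. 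One has to use that $F=\Id$ on the flat component, so orbits starting just beyond it on an attracting side either stay on that side or enter the component and stop.

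The more serious gap is in \ref{item: types of fixed points 6}. Your case split for $F^2-\Id$ to the right of $x$ omits the case $F^2\le\Id$ (not identically equal). This occurs even when $F$ flips sides, e.g.\ $F(y)=x-\tfrac12(y-x)$. It is alternative (a) and the heart of the proof: you must show $F^i(y)\le y$ for \emph{every} $i$ and all $y>x$ near $x$. This does not follow from \ref{item: types of fixed points 2} or \ref{item: types of fixed points 3} applied to $F^2$, because the right side of $F^2$ is again attracting. Your odd-iterate step (``one more application of $F$ on a sign-controlled side'') also fails whenever $F^{i-1}(y)<x$, since $F\ge\Id$ on the left may push the orbit back past $y$.

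The paper's Claim~\ref{claim: proof technical lemma} supplies the missing argument. Take $i'$ minimal such that $F^{i'}(y)>y$ for some such $y$. Then the orbit points just before time $i'$ lie left of $x$ and increase. If $F^{i'-k}(y)\in(x,y]$ is the last visit to the right, the intermediate value theorem on $[x,F^{i'-k}(y)]$ gives $y''$ with $F(y'')=F^{i'-1}(y)$. Then $F^2(y'')=F^{i'}(y)>y\ge y''$, a contradiction.

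Your ``mixed situations'' paragraph does not replace this. Accumulation of fixed points of $F$ or $F^2$ at $x$ does not yield the $\pm$ alternatives: $F^2\le\Id$ touching the diagonal along a sequence converging to $x$ is an instance of (a). After the Claim you still need separate subcases to get strict inequalities for $F^i$:
\begin{itemize}
\item points mapped exactly onto $x$;
\item $F^2=\Id$ to the right of $x$, which forces $x\in\mathring\Fix_{2p}(f)$, i.e.\ alternative (d).
\end{itemize}
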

\begin{proof}
See Section~\ref{sec: proof of technical lemma}.
\end{proof}

We call $x\in \d \Fix_p(f)\setminus \{\0,\1\}$ a \emph{stable} fixed point of $f^p$ if $x$ is of some type other than $(+,+)$ or $(-,-)$ under $f^p$ with respect to some neighbourhood (regardless of whether $x$ also admits a $(+,+)$- or $(-,-)$-neighbourhood).
We write $\Fix_p^s(f)$ for the collection of all stable fixed points of $f^p$.
Here, \emph{stability} refers to stability under perturbations: if a fixed point $x$ of $f^p$ is stable in the above sense and $g$ is sufficiently close to $f$ (in the $C^0$-topology), then $g^p$ also has a fixed point close to the connected component of $\Fix_p(f)$ containing $x$.
Note that every interval on which $f^p$ crosses the diagonal, that is, on which $f^p$ takes values both strictly above and strictly below the diagonal, necessarily intersects $\Fix_p^s(f)$.

We call a $p$-periodic point $x$ a \emph{stable $p$-periodic point} if $x\in \Fix_p^s(f)$. 
Note that if $x\in \Fix_p^s(f)$ is $q$-periodic (for some $q\mid p$), then also $x\in \Fix_q^s(f)$, that is, $x$ is a stable $q$-periodic point.
We write $\PER_p(f)$ [$\PER_p^s(f)$] for the collection of all [stable] $p$-periodic \emph{orbits}\footnote{We call a $p$-periodic orbit $\mc O(x)$ \emph{stable} if there is $x'\in \mc O(x)$ with $x'\in \Fix_p^s(f)$.}
of $f$
and $\PER(f)=\bigcup_{p\geq 1} \PER_p(f)$ as well as
$\PER^s(f)=\bigcup_{p\geq 1} \PER^s_p(f)$.
We write $\Per_p(f)$ [$\Per_p^s(f)$] for the collection of [stable] $p$-periodic points of $f$.

Given two endomorphisms $f$ and $g$ on $\I$,
we set
\[
 U_n=U_n(f,g)=\{x\in \I\: (f^n(x)-x)\cdot (g^n(x)-x)<0\}.
\]
In other words, $U_n$ is the collection of points whose left-right behaviour under $f$ and $g$ strictly differs at time $n$, in the sense that $f^n$ and $g^n$ map each such point to opposite sides of itself.
\begin{fact}\label{fact: leftmost endpoints not in Un-component (0,b)}
 Note that $U_n$ cannot contain
 (for any $n\in\N$) the leftmost point $x_1$ of a periodic orbit $x_1< \ldots <x_p$ in $\PER(f)\cap \PER(g)$ since otherwise, $f^n(x_1)<x_1$ or $g^n(x_1)<x_1$ in contradiction to $x_1$ being leftmost.
An analogous observation holds for the rightmost point $x_p$.
\end{fact}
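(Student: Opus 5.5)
The claim is immediate from the definitions, so I will argue directly by contradiction; no earlier lemma is needed.

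Let $x_1<\ldots<x_p$ be a periodic orbit lying in $\PER(f)\cap\PER(g)$, and fix $n\in\N$. Since the orbit is the same set for both maps, $f$ and $g$ each restrict to permutations of $\{x_1,\ldots,x_p\}$. Consequently $f^n(x_1)$ and $g^n(x_1)$ both belong to $\{x_1,\ldots,x_p\}$. Because $x_1$ is the minimum of this set, we get
\[
f^n(x_1)-x_1\geq 0\qquad\text{and}\qquad g^n(x_1)-x_1\geq 0 .
\]
The product $(f^n(x_1)-x_1)\cdot(g^n(x_1)-x_1)$ is therefore nonnegative. By the definition of $U_n$, this means $x_1\notin U_n$.

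The same reasoning handles the rightmost point. Since $x_p$ is the maximum of the orbit, both $f^n(x_p)-x_p\leq 0$ and $g^n(x_p)-x_p\leq 0$. The product is again nonnegative, so $x_p\notin U_n$.

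The only point needing care is the invariance of the orbit as a set under both maps. This is exactly what membership in $\PER(f)\cap\PER(g)$ guarantees, since that membership means the orbit is the same subset of $\I$ for $f$ and for $g$.
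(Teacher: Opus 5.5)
Your proposal is correct and matches the paper's argument: both $f^n(x_1)$ and $g^n(x_1)$ lie in the shared orbit, so neither can be strictly less than its minimum $x_1$, hence the product defining $U_n$ cannot be negative (and symmetrically for $x_p$). You correctly identify that membership in $\PER(f)\cap\PER(g)$ is exactly what ensures both maps leave the same finite set invariant.
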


Given some $x\in U_n$, let
$a<b\in \Fix_n(f)\cap \Fix_n(g)$ be the closest points to the left and right of $x$, respectively, with $\mc O_f(a)=\mc O_g(a)$ (or $a=\0$ if no such point exists) and $\mc O_f(b)=\mc O_g(b)$ (or $b=\1$ if no such point exists).
Note that $a=\0$ only if there is no $y>\0$ in $\Fix_n(f)\cap \Fix_n(g)$ with $\mc O_f(y)=\mc O_g(y)$.
Indeed, if such $y$ exists, then the collections of orbits of points in $\Fix_n(f)\cap [y,x]$ and $\Fix_n(g)\cap [y,x]$ are compact in the Hausdorff metric, and hence so is their intersection.
It follows that the closest point to the left of $x$ satisfying the above assumptions exists.
The analogous statement holds for $b$.

Given $a$ and $b$ as above, we call the interval $(a,b)$ the \emph{$U_n$-component} of $x$.
Observe that a priori, we have not specified the period of $a$ or $b$ (under $f$ and $g$, respectively); it may well be a proper divisor of $n$.
 Moreover, note that the $U_n$-component $(a,b)$ is not necessarily a subset of $U_n$ as it may intersect orbits from $\bigcup_{p|n}\PER_p(f) \triangle \PER_p(g)$.

\begin{fact}\label{fact: no crossings in Un-component}
With $(a,b)$ a $U_n$-component of $x$ as above,
if $\PER^s(f)\ssq \PER(g)$ and $\PER^s(g)\ssq \PER(f)$,
then $f^p$ and $g^p$ do not cross the diagonal on $(a,b)$ for any $p \mid n$ (indeed, any crossing would yield a stable, hence shared, periodic orbit intersecting $(a,b)$, which would make the $U_n$-component of $x$ properly contained in $(a,b)$, contrary to the assumptions).
In particular, we have that
$f^n|_{(a,b)}$ lies on or above the diagonal and $g^n|_{(a,b)}$ lies on or below the diagonal (or vice versa).
\end{fact}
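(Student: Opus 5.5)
The claim to prove is Fact~\ref{fact: no crossings in Un-component}. Assume $\PER^s(f)\ssq\PER(g)$ and $\PER^s(g)\ssq\PER(f)$, fix $x\in U_n$ with $U_n$-component $(a,b)$, and let $p\mid n$. The argument is by contradiction: a crossing of the diagonal by $f^p$ inside $(a,b)$ will produce a shared periodic orbit strictly between $a$ and $b$, which the minimality of $(a,b)$ forbids.

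Suppose $f^p$ crosses the diagonal on $(a,b)$, i.e.\ there are $y,z\in(a,b)$ with $f^p(y)>y$ and $f^p(z)<z$; say $y<z$, the other order being symmetric. On $[y,z]$, let $w$ be the supremum of the points $t$ with $f^p(t)\ge t$ and $f^p>\Id$ arbitrarily close to the left of $t$. I claim $w$ can be chosen in $\d\Fix_p(f)$ and of a stable type (e.g.\ $(+,-)$, $(+,\pm)$, $(\pm,-)$ or $(\pm,\pm)$), not only $(+,+)$ or $(-,-)$. Alternatively, I simply invoke the remark made right after the definition of $\Fix^s_p(f)$: every interval on which $f^p$ crosses the diagonal intersects $\Fix^s_p(f)$. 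Either way we get $w\in\Fix_p^s(f)\cap(a,b)$, lying in $(a,b)$ because $y,z$ do.

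The point $w$ is $q$-periodic for some $q\mid p$, and by the remark following the definition of stable periodic points, $w\in\Fix_q^s(f)$. Hence $\mc O_f(w)\in\PER^s(f)\ssq\PER(g)$. So $\mc O_f(w)$ is a periodic orbit of $g$ containing $w$, and since distinct $g$-orbits are disjoint, $\mc O_g(w)=\mc O_f(w)$. Also $w\in\Fix_n(f)\cap\Fix_n(g)$, because the common period $q$ divides $p$, which divides $n$. Moreover $w\ne x$, since $x\in U_n$ is not fixed by $f^n$. Thus $w$ lies in $(a,x)$ or $(x,b)$, contradicting the choice of $a$ or $b$ as the closest such point to $x$. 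The same argument with $f$ and $g$ swapped rules out crossings of $g^p$.

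For the last assertion, take $p=n$: neither $f^n-\Id$ nor $g^n-\Id$ changes strict sign on $(a,b)$. Since $x\in U_n$, $f^n(x)-x$ and $g^n(x)-x$ are nonzero with opposite signs, so one map stays on or above the diagonal on $(a,b)$ and the other on or below. The only delicate step is extracting a stable boundary fixed point from a crossing. If I had to prove it directly, I would take $w=\inf\{t\in[y,z]:f^p(t)\le t\}$. Points slightly left of $w$ satisfy $f^p>\Id$. Points slightly right of $w$ either include some with $f^p<\Id$, making $w$ of type $(+,-)$ or $(+,\pm)$, or all satisfy $f^p\ge\Id$. In the latter case I would move to the right end of the maximal interval on which $f^p\ge\Id$ starting at $w$, and classify that point the same way.
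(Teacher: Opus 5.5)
Your argument is correct and matches the paper's own (parenthetical) justification. A crossing of $f^p$ or $g^p$ in $(a,b)$ yields, via the remark that crossing intervals meet $\Fix^s_p(f)$, a stable and hence shared periodic point $w\in(a,b)$ with $w\neq x$ and $w\in\Fix_n(f)\cap\Fix_n(g)$. This contradicts the choice of $a$ or $b$, and the sign conclusion then follows from $x\in U_n$. Your extra checks only spell out what the paper leaves implicit: that $w\in\Fix^s_q(f)$ for its minimal period $q$, that $\mc O_g(w)=\mc O_f(w)$, and your direct extraction of a stable point.
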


\begin{fact}\label{fact: same type in Un-component}
As before, suppose $\PER^s(f)\ssq \PER(g)$ and $\PER^s(g)\ssq \PER(f)$,
and let $(a,b)$ be a $U_n$-component.
If $a>\0$, then Fact~\ref{fact: no crossings in Un-component} implies that $a$ must be of type $(\ast,+)$, $(\ast,-)$, $(\ast,+)^r$, or $(\ast,-)^r$ under each of $f^n$ and $g^n$, though the type necessarily differs between the two maps.
Indeed, if $a$ was of type $(\ast,\pm)$ or $(\ast,\pm)^r$ under $f^n$, $f^n$ would cross the diagonal in $(a,b)$.
Likewise, if $b<\1$, then $b$ must 
be of type $(+,\ast)$, $(-,\ast)$, $(+,\ast)^l$, or $(-,\ast)^l$ under each of $f^n$ and $g^n$, though again the type differs. 
More precisely, since $(a,b)\cap U_n\neq\emptyset$, if $a$ is of 
type $(\ast,+)$ or $(\ast,+)^r$ under $f^n$, then it is of type $(\ast,-)$ or $(\ast,-)^r$ under $g^n$, and vice versa; similarly, if $b$ is of type $(+,\ast)$ or $(+,\ast)^l$ under $f^n$, then it is of type $(-,\ast)$ or $(-,\ast)^l$ under $g^n$.
\end{fact}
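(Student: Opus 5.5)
The plan is to check the claim for $a$ and the map $f$, using Fact~\ref{fact: no crossings in Un-component}; the argument for $g$ is symmetric, and the one for $b$ is the mirror image. Since $x\in U_n$, Fact~\ref{fact: no crossings in Un-component} gives, after possibly swapping $f$ and $g$, that $f^n\geq \Id$ on $(a,b)$ and $g^n\leq\Id$ on $(a,b)$. Moreover $f^n(x)>x$ and $g^n(x)<x$ strictly. The key observation is that all of the "second coordinate" information about $a$ only concerns points immediately to the right of $a$. We may test it on neighbourhoods whose right half is contained in $(a,x]$, where the sign of $f^n-\Id$ is $\geq 0$ and is strict at $x$.

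The first case is $a\in\d\Fix_n(f)$. Consider a neighbourhood $U$ of $a$ whose right end is $x$ and whose left part is small. On the left of $a$ there are three possibilities.
\begin{itemize}
\item $f^n\geq\Id$ near $a$ on the left. Then $U$ is a $(+,+)$-neighbourhood, because the definition of $(+,+)$ needs no strict points.
\item $f^n\leq \Id$ near $a$ on the left, with a strict point there. Then $U$ is a $(-,+)$-neighbourhood, with witnesses $y_-$ and $y_+=x$.
\item $f^n-\Id$ takes both strict signs arbitrarily close to $a$ on the left. Then we get type $(\pm,+)$; if one of $(+,+)$ or $(-,+)$ already applies, we are in a previous case anyway.
\end{itemize}
In every case $a$ is of type $(\ast,+)$ under $f^n$. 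For $g^n$ the same reasoning gives $(\ast,-)$. Here the case "$g^n=\Id$ on the left" yields $(-,-)$, and the strict witness on the right is again $x$.

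I would also note why $(\ast,\pm)$ and the degenerate $(\pm,\pm)$ do not arise as the only option. The right side of $a$ never contains strict points of both signs inside $(a,b)$, since that would be a crossing. Also, $[a,\1\rrbracket\not\ssq\Fix_n(f)$, because $x$ is not fixed by $f^n$.

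The second case is $a\in\mathring\Fix_n(f)$. Let $\llbracket a',b'\rrbracket$ be the component of $\Fix_n(f)$ containing $a$. Since $f^n(x)\neq x$, we have $b'<x<b\leq \1$. So $b'\in\d\Fix_n(f)\setminus\{\0,\1\}$, and $b'$ lies in $(a,b)$. The left side of $b'$ is fixed, so the first case applied at $b'$ shows that $b'$ is of type $(\ast,+)$ under $f^n$; the same holds for $g^n$ with $(\ast,-)$, using the component of $\Fix_n(g)$. By definition, $a$ is then of type $(\ast,+)^r$ under $f^n$, respectively $(\ast,-)^r$ under $g^n$. Mixed cases are possible: $a$ may be in $\d\Fix_n(f)$ but in $\mathring\Fix_n(g)$. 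These are handled by treating each map separately.

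I expect the main obstacle to be bookkeeping rather than any single hard step. The types are defined through the existence of some neighbourhood, not through sufficiently small ones, so one must make sure that the chosen neighbourhood really produces the asserted second coordinate. The clean device for this is to always choose the right endpoint of the neighbourhood to be $x$ itself, or a point between $b'$ and $x$. This pins down the sign on the right and supplies the required strict witness. The final clause of the statement, that the type under $g^n$ is opposite to the type under $f^n$, is then immediate from the opposite signs of $f^n-\Id$ and $g^n-\Id$ on $(a,b)$, both strict at $x$. The statement for $b<\1$ follows by reflecting the interval.
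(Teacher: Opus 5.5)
Your proof is correct and follows essentially the paper's reasoning: both combine Fact~\ref{fact: no crossings in Un-component} (neither $f^n$ nor $g^n$ crosses the diagonal in $(a,b)$) with the point $x\in U_n\cap(a,b)$, which forces $f^n-\Id$ and $g^n-\Id$ to have opposite signs there. The difference is that you verify the types directly, using neighbourhoods whose right end is $x$ and passing to the right endpoint of the fixed component when $a\in\mathring\Fix_n(f)$, whereas the paper excludes the $(\ast,\pm)$-types because they would produce a crossing; your version makes explicit both that the defining neighbourhoods need not be small and why the degenerate $(\pm,\pm)$ case cannot occur.
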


\begin{fact}\label{fact: different types under different iterates}
In the situation and with the notation of 
Fact~\ref{fact: same type in Un-component}, if $p\mid n$ and $a$ 
is $p$-periodic under $f$, the second coordinate of the type of $a$ under $f^n$ may differ from that under $f^p$; that is, for example, $a$ may be of type $(\ast,+)$ under $f^n$ but not under $f^p$.
Due to Fact~\ref{fact: no crossings in Un-component} and by inspection of Lemma~\ref{lem: local type of fixed points}, we have that this can only occur if $a$ is of type $(+,-)$ under $f^p$. 
Indeed, Fact~\ref{fact: no crossings in Un-component} rules out type $(\ast,\pm)$ under $f^p$ and $f^n$ alike (just as discussed in Fact~\ref{fact: same type in Un-component});
further, if $a$ were of type $(\ast,+)$ 
or $(\alpha,\beta)^r$ (for any $\alpha$ and $\beta$) under $f^p$, the same would hold under 
$f^n$ by Lemma~\ref{lem: local type of fixed points} 
\ref{item: types of fixed points 1} \&
\ref{item: types of fixed points 3}.
\newline
\indent
Note that Lemma~\ref{lem: local type of fixed points}---specifically, item \ref{item: types of fixed points 6}---further gives that in the above situation, $a$ is necessarily of type $(\ast,-)$ under $f^{ip}$ for all odd $i\in \N$.
More precisely, alternative~(c) in item~\ref{item: types of fixed points 6} is ruled out by Fact~\ref{fact: no crossings in Un-component}. 
Thus, among the remaining alternatives, only~(b) and~(d) are compatible with $a$ having types with different second coordinates under $f^p$ and $f^n$.
\end{fact}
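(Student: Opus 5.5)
The plan is a case analysis on the type of $a$ under $f^p$. We feed each case through Lemma~\ref{lem: local type of fixed points}, and we use Fact~\ref{fact: no crossings in Un-component} to discard every alternative that would produce a crossing of the diagonal inside the $U_n$-component $(a,b)$.

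\textbf{Step 1: restrict the possible types of $a$.} Since $p\mid n$, Fact~\ref{fact: no crossings in Un-component} says that neither $f^p$ nor $f^n$ crosses the diagonal on $(a,b)$. Hence $a$ is not of type $(\ast,\pm)$ or $(\ast,\pm)^r$ under either iterate. This is the same reasoning as in Fact~\ref{fact: same type in Un-component}, now applied to $f^p$ as well as to $f^n$. So under $f^p$ the point $a$ is of type $(\ast,+)$, $(\ast,-)$, or $(\alpha,\beta)^r$ for some $\alpha,\beta$.

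\textbf{Step 2: rule out every type except $(+,-)$.}
\begin{itemize}
\item If $a$ is of type $(\ast,+)$ or $(\alpha,\beta)^r$ under $f^p$, items~\ref{item: types of fixed points 1} and~\ref{item: types of fixed points 3} of Lemma~\ref{lem: local type of fixed points} transfer this type to $f^n$ with $i=n/p$. The second coordinate is then unchanged.
\item If $a$ is of type $(-,-)$, item~\ref{item: types of fixed points 2} preserves it.
\item If $a$ is of type $(\pm,-)$, item~\ref{item: types of fixed points 4} gives type $(\pm,-)$ or $(\pm,\pm)$ under $f^n$. The latter is excluded by Step~1, so the second coordinate is again $-$.
\end{itemize}
The only remaining possibility is that $a$ is of type $(+,-)$ under $f^p$.

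\textbf{Step 3: analyse the type $(+,-)$.} We go through the alternatives of item~\ref{item: types of fixed points 6}.
\begin{itemize}
\item Alternative~(a) gives type $(\ast,-)$ under every $f^{ip}$, so no change of second coordinate occurs.
\item Alternative~(c) is ruled out by Step~1. If $n/p$ is even, $a$ would be of type $(\ast,\pm)$ under $f^n$, which is excluded. If $n/p$ is odd, the type under $f^n$ is $(\ast,-)$, since $(\ast,\pm)$ is again excluded, and then the second coordinate does not change.
\item Only alternatives~(b) and~(d) remain, and both state that $a$ is of type $(\ast,-)$ under $f^{ip}$ for every odd $i$. This is the claimed conclusion.
\end{itemize}

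\textbf{Main obstacle.} I expect the main subtlety to be that the types are not mutually exclusive. For example, a $(+,-)$-neighbourhood may contain a smaller $(+,+)$-neighbourhood. So ``the second coordinate differs'' must be read relative to the actual side of the graph on $(a,b)$ next to $a$. By Fact~\ref{fact: no crossings in Un-component}, $f^n$ and $f^p$ each stay weakly on one side of the diagonal on $(a,b)$. I would use this to fix the relevant neighbourhood as an interval whose right part lies in $(a,b)$. Once that neighbourhood is fixed, applying the lemma's conclusions to it is unambiguous.
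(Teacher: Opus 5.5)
Your argument is correct and follows the paper's own justification. You exclude $(\ast,\pm)$ via Fact~\ref{fact: no crossings in Un-component}, transfer $(\ast,+)$ and $(\alpha,\beta)^r$ to $f^n$ via items~\ref{item: types of fixed points 1} and~\ref{item: types of fixed points 3}, and then sort through the alternatives of item~\ref{item: types of fixed points 6}. You are a bit more explicit than the paper in two places: you handle $(-,-)$ and $(\pm,-)$ via items~\ref{item: types of fixed points 2} and~\ref{item: types of fixed points 4}, where the paper says only ``by inspection,'' and you note that alternative~(c) is excluded outright only when $n/p$ is even, while for odd $n/p$ it merely produces no change of second coordinate.
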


\begin{fact}\label{fact: Un-components dont contain endpoints}
 Assume again that $\PER^s(f)\ssq \PER(g)$ and $\PER^s(g)\ssq \PER(f)$ and consider $p\in\N$ with $p\nmid n$; this includes in particular the case $p>n$.
 Suppose 
 $\mc O_f(x)=\mc O_g(x) \in \PER_p(f)\cap \PER_p(g)$ for some
 $x$ in a $U_n$-component (of some point $y\in U_n$). 
 Then Fact~\ref{fact: no crossings in Un-component} implies that
$x$ necessarily lies in $U_n$ itself---note that $f^n(x)-x\neq 0$ and $g^n(x)-x\neq 0$ since $p\nmid n$.
\newline
\indent
 As a particular consequence, $U_n$-components of the form $(\0,b)$ cannot intersect orbits from $\PER_p(f)\cap \PER_p(g)$ (whenever $p\nmid n$):
First, Fact~\ref{fact: leftmost endpoints not in Un-component (0,b)} 
implies that if an orbit in $\PER_p(f)\cap \PER_p(g)$ meets a 
$U_n$-component of the form $(\0,b)$, then its leftmost point must 
lie outside $(\0,b)$, hence it must be $\0$. 
In other 
words, we would have $\mc O_f(\0)=\mc O_g(\0)\in \PER_p(f)\cap 
\PER_p(g)$.
Now, since $p\nmid n$, this would imply 
$f^n(\0),g^n(\0)>\0$ and hence, by continuity, $f^n(z),g^n(z)>\0$ for 
sufficiently small $z\in (\0,b)$, contradicting 
Fact~\ref{fact: no crossings in Un-component}.
A similar statement holds for $U_n$-components of the form $(a,\1)$.
\end{fact}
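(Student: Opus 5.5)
The statement has two parts. The first part follows from the sign dichotomy in Fact~\ref{fact: no crossings in Un-component}. The second follows by combining the first part with Fact~\ref{fact: leftmost endpoints not in Un-component (0,b)} and a continuity argument at $\0$. Throughout I assume $\PER^s(f)\ssq \PER(g)$ and $\PER^s(g)\ssq \PER(f)$, and let $(a,b)$ be the $U_n$-component in question.

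For the first part, let $x\in(a,b)$ with $\mc O_f(x)=\mc O_g(x)\in \PER_p(f)\cap\PER_p(g)$ and $p\nmid n$. Since $x$ has minimal period $p$ under both maps and $p\nmid n$, we have $f^n(x)\neq x$ and $g^n(x)\neq x$. By Fact~\ref{fact: no crossings in Un-component}, after possibly swapping $f$ and $g$, $f^n$ lies on or above the diagonal on all of $(a,b)$ and $g^n$ lies on or below it. This orientation is fixed on the whole component. Evaluating at $x$, the weak inequalities become strict: $f^n(x)>x$ and $g^n(x)<x$. Hence $(f^n(x)-x)(g^n(x)-x)<0$, that is, $x\in U_n$.

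For the second part, suppose $(a,b)=(\0,b)$ meets such an orbit at a point $x$. Let $x_1\le x$ be the leftmost point of the orbit. By the first part, every point of the orbit lying in $(\0,b)$ belongs to $U_n$. By Fact~\ref{fact: leftmost endpoints not in Un-component (0,b)}, $x_1\notin U_n$, so $x_1\notin(\0,b)$. Since $x_1\le x<b$, this forces $x_1=\0$, and in particular $\0\in\I$.

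Because $p\nmid n$, $\0$ is not fixed by $f^n$ or $g^n$, and $\0$ is the leftmost point of $\I$. Hence $f^n(\0)>\0$ and $g^n(\0)>\0$. By continuity of $f^n-\Id$ and $g^n-\Id$, there is $z\in(\0,b)$ close to $\0$ with $f^n(z)>z$ and $g^n(z)>z$. This contradicts Fact~\ref{fact: no crossings in Un-component}, which requires one of $f^n(z)-z$ and $g^n(z)-z$ to be $\le 0$. The case $(a,\1)$ is symmetric, using the rightmost point of the orbit.

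I expect no serious obstacle. The one point needing care is to use that the orientation in Fact~\ref{fact: no crossings in Un-component} is uniform over the whole component, so that strictness at the single point $x$ (respectively at $z$ near $\0$) yields the conclusion.
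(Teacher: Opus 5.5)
Your proof is correct and follows the paper's argument step for step. The first part uses Fact~\ref{fact: no crossings in Un-component} together with $f^n(x)\neq x\neq g^n(x)$ (since $p\nmid n$). The second part uses Fact~\ref{fact: leftmost endpoints not in Un-component (0,b)} to force the leftmost orbit point to be $\0$, then continuity at $\0$ for the contradiction.

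Two small improvements on the paper's wording:
\begin{itemize}
\item You make explicit that the second part relies on the first: every orbit point in $(\0,b)$ lies in $U_n$.
\item You state the continuity conclusion correctly as $f^n(z)>z$ and $g^n(z)>z$. The paper's ``$f^n(z),g^n(z)>\0$'' is evidently meant to say this.
\end{itemize}
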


\begin{thm}\label{thm: periodic orbits determine left-right behaviour}
 Suppose $f,g\: \I\to \I$ are continuous with $\PER(f)^s\ssq \PER(g)$ and $\PER(g)^s\ssq \PER(f)$.
 Suppose there is $n\in\N$ such that $U_n\neq \emptyset$.

 Then $U_n$ does not intersect orbits from $\PER(f)\cap \PER(g)$.
\end{thm}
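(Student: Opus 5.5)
Suppose, for a contradiction, that $U_n$ meets a shared orbit. Among all such configurations (over all admissible $n$) I choose one with the \emph{minimal} period. So fix a point $x\in U_n$ with $\mc O_f(x)=\mc O_g(x)\in\PER_p(f)\cap\PER_p(g)$, where $p$ is minimal. Since $x\in U_n$, we have $f^n(x)\neq x$, and hence $p\nmid n$.

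\textbf{Localising to the $U_n$-component.} Let $(a,b)$ be the $U_n$-component of $x$. By Fact~\ref{fact: no crossings in Un-component}, we may assume $f^n\geq\Id$ and $g^n\leq\Id$ on $(a,b)$. Fact~\ref{fact: Un-components dont contain endpoints} rules out $a=\0$ and $b=\1$. Both $a$ and $b$ are therefore shared periodic points, with periods $p_a,p_b$ dividing $n$. Fact~\ref{fact: same type in Un-component} gives that $a$ is of type $(\ast,+)$ or $(\ast,+)^r$ under $f^n$ and of type $(\ast,-)$ or $(\ast,-)^r$ under $g^n$. The analogous statement holds at $b$.

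\textbf{Pushing the discrepancy down to period $p_a$.} Fact~\ref{fact: different types under different iterates} restricts how these types can arise. Either $a$ already has opposite second coordinates under $f^{p_a}$ and $g^{p_a}$, or one of the maps has type $(+,-)$ (respectively $(-,+)$) under the $p_a$-th iterate, and its type then flips between odd and even multiples of $p_a$ (alternatives (b) or (d) of Lemma~\ref{lem: local type of fixed points}\ref{item: types of fixed points 6}).

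In the first case, points just to the right of $a$ lie in $U_{p_a}$. The $U_{p_a}$-component containing them also contains $x$, since there are no shared orbits of period dividing $p_a\mid n$ strictly between $a$ and $x$. By Fact~\ref{fact: Un-components dont contain endpoints}, $x\in U_{p_a}$. Then $p_a<n$, and I pass to a smaller index. The aim is to reduce to the case where $n$ is the least index at which the discrepancy at $x$ occurs.

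In the second case, I use the parity structure. Since $p\nmid n$, I compare the left-right behaviour along the orbit of $x$ at the times $n$ and $n\pm p_a$, or $2n$. The point is that $f^{2n}$ and $g^{2n}$ must again sit on opposite sides on $(a,b)$, or else collapse to $\Id$. The latter is impossible because $x$ is not fixed by $f^{2n}$ unless $p\mid 2n$, which needs separate handling.

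\textbf{Contradiction via the orbit of $x$.} Once the situation is reduced, I exploit that $\mc O(x)$ is shared. Write $x=x_j$ in the ordered orbit $x_1<\dots<x_p$. On $(a,b)$ the relative position of $f^n(x)$ versus $x$ is opposite to that of $g^n(x)$. By Fact~\ref{fact: leftmost endpoints not in Un-component (0,b)} applied to the orbit, the extreme points $x_1,x_p$ are not in $U_n$. I would then look at the orbit points lying in $(a,b)$ and count them. Under $f^n$ each such point moves right, under $g^n$ each moves left, and both maps permute $\mc O(x)\cap(a,b)$ together with points leaving the interval. Monotonicity of the order on a finite set then gives a contradiction: $f^n$ restricted to the orbit is a bijection with $f^n(y)\ge y$ on $(a,b)$, so the rightmost orbit point in $(a,b)$ must leave. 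Tracking where it lands, and using $b$ as a shared barrier of type $(-,\ast)$ versus $(+,\ast)$, should force a shared crossing, contradicting Fact~\ref{fact: no crossings in Un-component}.

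I expect this last step, together with the type-$(+,-)$ parity case, to be the main obstacle. I would first attempt it assuming all relevant fixed points are of pure types, and only then add the $(\ast,\ast)^r$ interior cases using Lemma~\ref{lem: local type of fixed points}\ref{item: types of fixed points 1}.
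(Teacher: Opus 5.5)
There is a genuine gap, and it sits in the step you flagged yourself: the final contradiction. Looking only at the single iterates $f^n$ and $g^n$ on $\mc O(x)$ cannot produce a contradiction. Orbit points in $(a,b)$ moving right under $f^n$ and left under $g^n$ is perfectly compatible with $f|_{\mc O(x)}$ and $g|_{\mc O(x)}$ being two $p$-cycles on the same set. Moreover, $f^n$ pushing the rightmost such point past $b$ forces no crossing in $(a,b)$: the types of $b$ under $f^n$ and $g^n$ only constrain the graphs immediately to the left of $b$, not where points land outside $(a,b)$.

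For instance, take $n=1$ and a $3$-orbit $x_1<x<x_3$ whose middle point is the only orbit point in $(a,b)$. The data $f(x)=x_3$, $g(x)=x_1$ is consistent at time $1$. The contradiction only becomes visible at time $2$: there $f^2\geq\Id$ on $(a,b)$ (type preservation plus absence of crossings) forces $f^2(x)>x$, whereas the $3$-cycle gives $f^2(x)=x_1$.

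Your ``pushing down to $p_a$'' also has the wrong target. In the $(+,-)$ parity case the second coordinates of $a$ agree under $f^{p_a}$ and $g^{p_a}$, so $x\notin U_{p_a}$ and no reduction to $n=p_a$ is possible. What one actually shows (with $n$ minimal) is that one of the two endpoints has period exactly $n$; in the parity case at $a$, this is $b$, with $n=2p_a$.

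The missing idea is to count over a full period. Because $\mc O_f(x)=\mc O_g(x)$, both $\#\{1\le m<p: f^m(x)>x\}$ and $\#\{1\le m<p: g^m(x)>x\}$ equal $\#\{y\in\mc O(x): y>x\}$. The paper shows these two counts differ, as follows.

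\begin{enumerate}
\item Take $n$ minimal for the discrepancy. Your minimal-period choice can be adapted: reduce to $n<p$ via $f^n(x)=f^{n\bmod p}(x)$ and take $n$ minimal for $x$. Then show that an endpoint of the $U_n$-component, say $a$, has period \emph{exactly} $n$.
\item For $m<n$ there is no discrepancy, by minimality of $n$.
\item For $n<m<p$ with $n\nmid m$, minimality of $p$ rules out crossings of $f^m$ and $g^m$ in $(a,b)$: a stable fixed point there would be a shared point of period $<p$ lying in $U_n$, by Fact~\ref{fact: Un-components dont contain endpoints}. Since $f^m(a)=f^{m\bmod n}(a)\neq a$ and $g^m(a)=g^{m\bmod n}(a)\neq a$ lie on the same side of $a$, this gives $x\notin U_m$.
\item For $\ell n<p$, Lemma~\ref{lem: local type of fixed points}\ref{item: types of fixed points 3} together with the absence of crossings gives $f^{\ell n}(x)>x$, whereas $g^n(x)<x$.
\end{enumerate}

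Hence the $f$-count strictly exceeds the $g$-count. Your proposal lacks three of these ingredients: an endpoint having exact period $n$, agreement at non-multiples of $n$, and persistence of $f^{\ell n}\geq\Id$ on $(a,b)$ along all multiples of $n$ below $p$. The discussion of times $n\pm p_a$, $2n$ and ``collapse to $\Id$'' does not substitute for them.
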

\begin{proof}
We write $\Per_p(f,g)\ssq \I$ for the collection of points whose $f$-orbits and $g$-orbits, respectively, are in $\PER_p(f)\cap \PER_p(g)$.
Our goal is to show that $U_n$ and $\Per_p(f,g)$ are disjoint for all $n, p \in \N$.
For a contradiction, assume that $n$ is minimal with the property that $U_n\cap \Per_p(f,g)\neq\emptyset$ for some $p$.

First, assume for a contradiction that there is $p\leq n$ and $x\in \Per_p(f,g) \cap U_n$.
Note that $n\bmod p<n$, and hence, by minimality of $n$,
$f^{n\bmod p}(x)>x$ if and only if $g^{n\bmod p}(x)>x$,
and $f^{n\bmod p}(x)<x$ if and only if $g^{n\bmod p}(x)<x$.
This contradicts the assumption that $x \in U_n$, since $f^n(x) = f^{n \bmod p}(x)$ and $g^n(x) = g^{n \bmod p}(x)$.

Therefore, in what follows, we assume for a contradiction that there exists $P > n$ and a point $x_0 \in \Per_P(f,g) \cap U_n$; let $P$ be minimal with this property.
We will show that this implies
\begin{align}\label{eq: contradiction to same number of left visits}
| \#\{1\leq m<P\:f^m(x_0)>x_0\}-\#\{1\leq m<P\:g^m(x_0)>x_0\}|>0
\end{align}
which contradicts the assumption that $\mc O_f(x_0)=\mc O_g(x_0)$:
if $\mc O_f(x_0)=\mc O_g(x_0)$, then regardless of the 
combinatorics of the orbit, $x_0$ necessarily visits the same number of points to its right under $f$ as under $g$ before returning to itself.

Towards proving \eqref{eq: contradiction to same number of left visits}, we first show that the left-right behaviour of $x_0$ under $f$ and $g$ can differ only at times divisible by $n$, see \eqref{eq: x0 not in Up for p not multiple of n} below. 
Assuming for concreteness that $f^n(x_0)>x_0$ and $g^n(x_0)<x_0$, we will then show that $f^m(x_0)>x_0$ for every $m$ with $n\mid m$.
Together with \eqref{eq: x0 not in Up for p not multiple of n}, this implies that $x_0$ is mapped to the right under iteration of $f$ more often than under iteration of $g$, which proves \eqref{eq: contradiction to same number of left visits}.

We will need the following 

\begin{claim}\label{claim: boundary points of Un components are n-periodic}
  Let $(a,b)$ be the $U_n$-component of $x_0$.
  Then $\0<a<b<\1$; further, $a$ or $b$ is in $\Per_n(f,g)$.
 \end{claim}
\begin{proof}[Proof of Claim~\ref{claim: boundary points of Un components are n-periodic}]
Observe that $\0<a<b<\1$ due to the second part of Fact~\ref{fact: Un-components dont contain endpoints} (recall that $x_0\in \Per_P(f,g)$ with $P>n$).
Hence, by definition of $U_n$-components, $a\in \Per_q(f,g)$ and $b\in \Per_{q'}(f,g)$ for some divisors $q$ and $q'$ of $n$.
To prove the claim, assume for a contradiction that $q,q'<n$.

 By minimality of $n$, $U_q\cap \Per_P(f,g)=\emptyset$ so that $f^q(x_0)$ and $g^q(x_0)$ are simultaneously greater or less than $x_0$---note that $f^q(x_0),g^q(x_0)\neq x_0$ as $P>q$.
 In the first case, $a$ is of type $(\ast,+)$ or $(\ast,+)^r$ under $f^q$ and $g^q$; in the second case,
 $a$ is of type $(\ast,-)$ or $(\ast,-)^r$ under $f^q$ and $g^q$---in both cases, we use that by Fact~\ref{fact: no crossings in Un-component}, neither $f^q$ nor $g^q$ crosses the diagonal in $(a,b)$.
 An analogous statement holds for $b$ under $f^{q'}$ and $g^{q'}$:
either $f^{q'}(x_0)$ and $g^{q'}(x_0)$ are both greater than $x_0$, in which case $b$ is of type $(+,\ast)$ or $(+,\ast)^l$ under both $f^{q'}$ and $g^{q'}$; or else $b$ is of type $(-,\ast)$ or $(-,\ast)^l$ under both maps.

 If $q=q'$, the above boils down to saying that either
 $a$ is of type $(\ast,+)$ or $(\ast,+)^r$ under $f^q$ and $g^q$ or $b$ is of type $(-,\ast)$ or $(-,\ast)^l$ under $f^{q}$ and $g^{q}$. 
 It follows from Lemma~\ref{lem: local type of fixed points} \ref{item: types of fixed points 1} and \ref{item: types of fixed points 3} that, in the first case, $a$ is of type $(\ast,+)$ or $(\ast,+)^r$ under $f^n$ and $g^n$, whilst in the second case $b$ is of type $(-,\ast)$ or $(-,\ast)^l$ under $f^n$ and $g^n$.
 In either case, this contradicts the last part of Fact~\ref{fact: same type in Un-component}.

We may therefore assume without loss of generality that $q < q'$.
Since $a$ is the left endpoint of a $U_n$-component, the second coordinates of its type under $f^n$ and $g^n$ must be of opposite sign (Fact~\ref{fact: same type in Un-component}).
On the other hand, as argued above (in the second paragraph of this proof), $a$ is either of type $(\ast,+)$ or $(\ast,+)^r$ under both $f^q$ and $g^q$, or of type $(\ast,-)$ or $(\ast,-)^r$ under both.
By Fact~\ref{fact: different types under different iterates}, $a$ must therefore be of type $(+,-)$ under at least one of $f^q$ and $g^q$---while the types may differ between the two maps, they are in any case of the form $(\ast,-)$ or $(\ast,-)^r$.
Fact~\ref{fact: different types under different iterates} further implies that $2q \mid n$ (since for odd $i$, $a$ is of type $(\ast,-)$ or $(\ast,-)^r$ under both $f^{iq}$ and $g^{iq}$), and that under one of $f^{2q}$ and $g^{2q}$, $a$ is of type $(\ast,-)$ or $(\ast,-)^r$, while under the other it is of type $(\ast,+)$ or $(\ast,+)^r$. 
The latter holds because for $a$ to exhibit types with distinct second coordinates (other than $\pm$) under, say, $f^q$ and $f^n$, one of the alternatives~(b) or~(d) in item~\ref{item: types of fixed points 6} of Lemma~\ref{lem: local type of fixed points} must apply.
Since neither $f^{2q}$ nor $g^{2q}$ crosses the diagonal in $(a,b)$ (Fact~\ref{fact: no crossings in Un-component} applies because $2q \mid n$), and since $f^{2q}(x_0), g^{2q}(x_0) \neq x_0$ (as $P > n \geq 2q$), we conclude that $x_0 \in U_{2q}$.
Minimality of $n$ then forces $n = 2q$, and since $q' > q$ divides $n$, we deduce that $q' = n$.
The statement follows.
\end{proof}

Due to Claim~\ref{claim: boundary points of Un components are n-periodic}, we may assume without loss of generality that $a\in \Per_n(f,g)$ in the following; the case with $b\in \Per_n(f,g)$ is similar.

Note that for $n<p<P$ with $n\nmid p$ and $x_0$ as above (with corresponding $U_n$-component $(a,b)$), minimality of $P$ implies that $\Per_q(f,g)\cap (a,b)=\emptyset$ for any divisor $q$ of $p$---this follows from the first part of Fact~\ref{fact: Un-components dont contain endpoints} if $q\nmid n$ and is an immediate consequence of the definition of $U_n$-components if $q\mid n$.
Since $\PER^s(f)\ssq \PER(g)$ and $\PER^s(g)\ssq \PER(f)$, this implies that neither $f^p$ nor $g^p$ crosses the diagonal in $(a,b)$.
Note further that as $p\bmod n<n$
and hence $U_{p\bmod n}\cap \Per_n(f,g)=\emptyset$ (by minimality of $n$), we either have
$f^p(a)=f^{p\bmod n}(a)> a$ and 
$g^p(a)=g^{p\bmod n}(a)>a$ or 
$f^p(a)< a$ and 
$g^p(a)< a$; note that equality is ruled out due to $a\in \Per_n(f,g)$ (and thus $f^p(a),g^p(a)\neq a$).
Combining this observation with the fact that neither $f^p$ nor $g^p$ crosses the diagonal in $(a,b)$, we obtain $U_p \cap (a,b) = \emptyset$. 
In particular, this implies
\begin{align}\label{eq: x0 not in Up for p not multiple of n}
    x_0\notin U_p \quad \text{whenever } n<p<P \text{ and } n\nmid p.
\end{align}

We conclude from the above that only multiples of $n$ can contribute to the difference in \eqref{eq: contradiction to same number of left visits}.
Now, by Fact~\ref{fact: no crossings in Un-component} and Fact~\ref{fact: same type in Un-component} we may assume without loss of generality that $f^n(x)\geq x$ (and hence $g^n(x)\leq x$) for all $x\in (a,b)$, that is, $a$ is of type $(\ast,+)$ under $f^n$ (the case $(\ast,+)^r$ is analogous; if either condition held for 
$g^n$, we would replace $f^n$ by $g^n$ in what follows).
Then, also $f^{\ell n}(x)\geq x$ ($x\in(a,b)$) as long as $\ell n<P$ since on the one hand, $a$ remains
of type $(\ast,+)$ for all iterates of $f^n$---see Lemma~\ref{lem: local type of fixed points} \ref{item: types of fixed points 3}---and on the other hand, $f^{\ell n}$ does not cross the diagonal on $(a,b)$ as $P$ is minimal. 
Indeed, a crossing would imply the existence of a stable fixed point $y$ of $f^{\ell n}$ (and hence of $g^{\ell n}$, since $\PER^s(f)\ssq \PER(g)$) in $(a,b)$.
By the definition of $U_n$-components, $y$ could not be $p$-periodic for any divisor $p$ of $n$. 
Therefore, by Fact~\ref{fact: Un-components dont contain endpoints}, we would have $\Per_p(f,g) \cap U_n \neq \emptyset$ for some $p < P$ with $p \nmid n$, contradicting the minimality of $P$.
This shows that if $x_0$ is mapped to its right under $f^n$, then it is also mapped to its right under $f^{\ell n}$ for $\ell n<P$.

Altogether, assuming without loss of generality that $f^n(x_0)>x_0$ and $g^n(x_0)<x_0$, the above implies
\begin{align*}
 &\#\{1\leq m<P\:f^m(x_0)>x_0\}-\#\{1\leq m<P\:g^m(x_0)>x_0\}\\
 &\stackrel{\eqref{eq: x0 not in Up for p not multiple of n}}{=}
 \#\{\ell\in \N\:\ell n<P \text{ and } f^{\ell n}(x_0)>x_0\}-\#\{\ell \in \N\:\ell n<P \text{ and } g^{\ell n}(x_0)>x_0\}
 \geq 1.
\end{align*}
That is, \eqref{eq: contradiction to same number of left visits} holds and the statement follows.
\end{proof}

\section{Applications}\label{sec: applications}
In the first subsection, we apply Lemma~\ref{lem: left-right determines map} and Theorem~\ref{thm: periodic orbits determine left-right behaviour} to settings where the equality $\PER(f)=\PER(g)$ yields particularly strong consequences, specifically for endomorphisms with dense periodic points. 
In the subsequent section, we then use these applications to derive implications of this equality for general endomorphisms on compact $\I$.
\subsection{Mixing endomorphisms and endomorphisms with dense periodic points}\label{sec: applications dense orbits}
The next statement---a strengthening of Theorem~\ref{introthm: mixing maps are determined by period spectrum} from the introduction---is not only the most direct consequence of the results in the previous sections, but also underpins all subsequent implications in this article.

Recall that given two endomorphisms $f$ and $g$ on $\I$, we write $\Per_p(f,g)$ for the collection of points whose orbits are in $\PER_p(f)\cap \PER_p(g)$.
\begin{cor}\label{cor: mixing maps are fully determined by periodic orbits}
    Suppose $f, g \colon \I \to \I$ are continuous with
$\PER^s(f) \subseteq \PER(g)$ and $\PER^s(g) \subseteq \PER(f)$.
If $\overline{\bigcup_{p \text{ prime}} \Per_p(f,g)} = \I$, then $f = g$.

In particular,
if $f, g \colon \I \to \I$ are continuous with
$\PER^s(f) \subseteq \PER(g)$ and $\PER^s(g) \subseteq \PER(f)$, and $f$ is topologically mixing, then $f = g$.
\end{cor}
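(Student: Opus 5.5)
The plan is to combine Theorem~\ref{thm: periodic orbits determine left-right behaviour} with Lemma~\ref{lem: left-right determines map} on each shared prime orbit, and then pass to all of $\I$ by density and continuity.

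\emph{Step 1: $f=g$ on every shared prime orbit.} Fix a prime $p$ and $x\in\Per_p(f,g)$. Let $S=\mc O_f(x)=\mc O_g(x)$, a set of $p$ points. By Theorem~\ref{thm: periodic orbits determine left-right behaviour}, no $U_n$ meets $S$. Hence for every $y\in S$ and every $n$ we have $(f^n(y)-y)(g^n(y)-y)\ge 0$.

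\emph{Step 2: $f|_S$ and $g|_S$ have the same left-right behaviour.} Both $f|_S$ and $g|_S$ are $p$-cycles on $S$. So for $1\le n\le p-1$ neither $f^n(y)$ nor $g^n(y)$ equals $y$, and the product in Step~1 is strictly positive. Thus $f^n(y)$ and $g^n(y)$ lie on the same side of $y$ for all $n$. Lemma~\ref{lem: left-right determines map}, which needs $p$ prime, then gives $f|_S=g|_S$. Hence $f=g$ on $\bigcup_{p\text{ prime}}\Per_p(f,g)$.

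\emph{Step 3: extend to $\I$.} This union is dense by hypothesis. Since $f$ and $g$ are continuous, $f=g$ on all of $\I$.

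\emph{The mixing case.} Here the main work is to show that the shared prime periodic points are dense.

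First, a topologically mixing interval map has dense periodic points, and in fact periodic points of every sufficiently large period in every open set. I would cite the standard facts: mixing interval maps are exact on compact subintervals and have horseshoes inside any open set. Concretely, given an open interval $J$, mixing supplies $N$ such that $f^n(J')\supseteq J'$ for a compact subinterval $J'\subset J$ and all $n\ge N$. For $n$ large one gets two disjoint subintervals covering each other (a horseshoe) under $f^n$. This yields points in $J$ of all large periods, in particular prime periods.

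Second, these points must be of a stable kind. Taking the horseshoe inside $J$, $f^p$ maps some subinterval $K\subset J$ over itself with $f^p(K)\supsetneq K$ in a crossing way. So $f^p-\Id$ changes sign on $K$, and $K$ meets $\Fix_p^s(f)$. Since $\Fix_p(f)$ for large prime $p$ consists of period $1$ and period $p$ points, I would choose $K$ to avoid fixed points of $f$. That is possible because $\Fix_1(f)$ is nowhere dense for mixing $f$: otherwise $f$ would be the identity on an interval. This gives a stable $p$-periodic point of $f$ in $J$.

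Third, such an orbit lies in $\PER^s(f)\subseteq\PER(g)$, so it belongs to $\Per_p(f,g)$. Hence the shared prime periodic points are dense, and the first part applies.

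The main obstacle I expect is this density step. One has to ensure genuinely stable orbits of \emph{prime} minimal period in every open set, handling the possibility that $\I$ is non-compact. For non-compact $\I$ I would work inside compact subintervals, since mixing still gives the covering relations needed for $f^n$ there.
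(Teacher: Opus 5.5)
Your proposal is correct and essentially follows the paper's proof. The first part uses the same chain: Theorem~\ref{thm: periodic orbits determine left-right behaviour} and Lemma~\ref{lem: left-right determines map} on each shared prime orbit, then density and continuity. The mixing case uses the same route as well: choose a closed $K\subseteq J$ free of fixed points of $f$, on which $f^p$ crosses the diagonal for all large primes $p$, which yields shared stable $p$-periodic orbits. The only difference is that you package the crossing as a horseshoe for $f^p$, whereas the paper notes directly that $f^n(K)\supseteq[a',b']$ with $a'<a$ and $b<b'$ for all large $n$. Either one forces the needed sign change of $f^p-\Id$ on $K$.
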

\begin{rem}\label{rem: mixing maps are fully determined by periodic points}
For use in the proof of Corollary~\ref{cor: dense periodic orbits determine second iterate}, let us note that the first part of the above statement remains valid if, instead of assuming denseness of $\bigcup_{p \text{ prime}} \Per_p(f,g)$, we assume 
\[\overline{\Per_1(f,g)\cup \bigcup_{p \text{ prime}} \Per_p(f,g)}=\I.\]
Although points in $\Per_1(f,g)$ are not dealt with explicitly in the proof below, this slight extension follows immediately from it.
\end{rem}
\begin{proof}
We start with the first part.
By assumption, the collection of points with orbits in 
$\PER(f)\cap\PER(g)$ is dense, and thus, by 
Theorem~\ref{thm: periodic orbits determine left-right behaviour}, 
$U_n=\emptyset$ for all $n\in\N$. 
In particular, for any prime periodic orbit $\mc O(x)$ contained in 
$\PER(f)\cap\PER(g)$, the left-right behaviour of $\mc O(x)$ under 
$f$ coincides with that under $g$.
From Lemma~\ref{lem: left-right determines map}, we obtain that
$\left.f\right|_{\mc O(x)}=\left.g\right|_{\mc O(x)}$.
As $\bigcup_{p \text{ prime}}\Per_p(f,g)$ is dense in $\I$, this shows that $f$ and $g$ coincide on a dense set of points and hence, by continuity, $f=g$.

For the second part, we prove that $\bigcup_{p \text{ prime}} \Per_p(f,g)$ is dense in $\I$.
To this end, it suffices to show that $\bigcup_{p \text{ prime}} \Per_p^s(f)$ is dense in $\I$, since $\PER^s(f) \subseteq \PER(g)$.

Now, given any interval $J\ssq\I$, pick a closed subinterval $K\ssq J$ 
containing no fixed points of $f$; this is possible since $\Fix_1(f)$ 
is nowhere dense under the assumption of topological mixing---it is 
clearly closed and cannot contain a proper interval.
Suppose $K=[a,b]$ and assume without loss of generality that there are $\0<a'<a$ and $b<b'<\1$ with $K\cap \big((\0,a')\cup (b',\1)\big)=\emptyset$.

As $f$ is mixing, there exist $N_1,N_2\in\N$ such that 
$f^n(K)\cap (b',\1)\neq\emptyset$ for all $n\geq N_1$ and 
$f^n(K)\cap (\0,a')\neq\emptyset$ for all $n\geq N_2$. 
It follows that $K\ssq [a',b']\ssq f^n(K)$ for all $n\geq N\coloneqq \max(N_1,N_2)$; in particular,
$f^n$ crosses the diagonal on $K$.
Consequently, $K\cap\Fix^s_n(f)\neq\emptyset$ for all $n\geq N$. 
In particular, for any prime $p\geq N$, we have 
$K\cap\Fix_p^s(f)\neq\emptyset$ and, since $K\cap\Fix_1(f)=\emptyset$, 
it follows that $K\cap\Per^s_p(f)\neq\emptyset$. 
As $J\ssq\I$ was an arbitrary interval and $K\ssq J$, this completes the proof.
\end{proof}

\begin{rem}
Let us point out that Corollary~\ref{cor: mixing maps are fully determined by periodic orbits} provides an answer to a question regarding the \emph{bifurcation set} originally posed by the present authors in~\cite{FuhrmannGrogerPasseggi2021}. 
Specifically, it was asked whether there exist natural families of maps on the unit interval $[0,1]$ for which the bifurcation set is a \emph{complete topological invariant}---that is, whether the existence of a homeomorphism between bifurcation sets implies topological conjugacy of the maps. 
While we refrain from a formal introduction of the bifurcation set here (referring the reader to~\cite{FuhrmannGrogerPasseggi2021} for details), 
it should be noted that the bifurcation set of any transitive interval map uniquely identifies its set of periodic orbits. 
In light of Corollary~\ref{cor: mixing maps are fully determined by periodic orbits}, it follows that the bifurcation set is indeed a complete topological invariant for the class of topologically mixing interval maps. 
Indeed, the results in the present work were also motivated by an interest in understanding the extent to which the bifurcation set determines the underlying map, or equivalently, the degree to which distinct maps are characterised by distinct bifurcation sets.
\end{rem}

We now turn to a strengthening of
Theorem~\ref{introthm: dense periodic orbits determine second iterate} from the introduction.
In the following, we write $\Per(f,g)$ for the collection $\bigcup_{p \in \N} \Per_p(f,g)$ of all periodic points with orbits in $\PER(f)\cap\PER(g)$; similarly, we write $\Per(f)$ for the collection $\bigcup_{p \in \N} \Per_p(f)$ of all periodic points of $f$.
\begin{cor}\label{cor: dense periodic orbits determine second iterate}
Suppose $f,g\: \I\to\I$  are continuous with $\PER^s(f)\ssq \PER(g)$ and $\PER^s(g)\ssq \PER(f)$.
If $\overline{\Per(f,g)}=\I$, then $f^2=g^2$.
\end{cor}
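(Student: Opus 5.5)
We plan to reduce the statement to Remark~\ref{rem: mixing maps are fully determined by periodic points} applied to the second iterates $F=f^2$ and $G=g^2$, restricted to suitable invariant subintervals. The key input is the decomposition result for maps with dense periodic points (Theorem~\ref{thm: CovenMulvey}, the extension of Barge--Martin to arbitrary intervals). It says that, since $\Per(f)$ is dense (as $\Per(f,g)\ssq\Per(f)$ is), there is a family of pairwise disjoint subintervals $J_k$, each invariant under $f^2$ and on which $f^2$ is topologically mixing, such that $f^2=\Id$ on $\I\setminus\bigcup_k J_k$.

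\textbf{Step 1: the identity part.} On the complement of $\bigcup_k J_k$ we have $f^2=\Id$, so every point there is $1$- or $2$-periodic under $f$. We show $g^2=\Id$ there as well. Points of $\Per(f,g)$ lying there have $f$-period $1$ or $2$, hence the same $g$-period, so $g^2$ fixes them. If the complement has interior, $\Per(f,g)$ is dense in that interior, and continuity gives $g^2=\Id$ on its closure. Isolated boundary points of the $J_k$ are endpoints of $J_k$, and these are handled by continuity from inside $J_k$ once Step 2 is done.

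\textbf{Step 2: the mixing pieces.} Fix $J=J_k$. We want to apply Remark~\ref{rem: mixing maps are fully determined by periodic points} to $F|_J$ and $G|_J$. This requires three things.
\begin{enumerate}[label=(\alph*)]
\item $G(J)\ssq J$. This should follow because $J$ is (up to endpoints) the closure of the $F$-periodic points in it. Every such $F$-periodic orbit in $\Per(f,g)$ is a $G$-periodic orbit with the same point set, since $\mc O_F(x)\ssq\mc O_f(x)=\mc O_g(x)$, and more precisely $\mc O_F(x)$ is the set of even iterates along the $f$-cycle.
\item The stable-orbit inclusions for $F,G$ on $J$.
\item Density in $J$ of points that are $F$-fixed or $F$-prime-periodic with shared orbits.
\end{enumerate}

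For (c) we reuse the second half of the proof of Corollary~\ref{cor: mixing maps are fully determined by periodic orbits}. Since $F|_J$ is mixing, for each compact $K\ssq J$ free of $F$-fixed points and each large prime $p$, $K$ contains stable $F$-$p$-periodic points. These are $f$-periodic of period $p$ or $2p$ and stable for $f^{p}$ or $f^{2p}$, so their $f$-orbits lie in $\PER^s(f)\ssq\PER(g)$.

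The \textbf{main obstacle} is matching the $F$-orbit with the $G$-orbit. Equal $f$- and $g$-orbits do not immediately give equal $F$- and $G$-orbits when the period is even: a $2p$-cycle splits under $F$ into two $p$-cycles, and $f$ and $g$ might pair points differently. To handle this, we invoke Theorem~\ref{thm: periodic orbits determine left-right behaviour} directly for $f,g$, which gives $U_n=\emptyset$ for all $n$. Hence $f^2$ and $g^2$ have the same left-right behaviour on every orbit in $\PER(f)\cap\PER(g)$. We then argue that the $F$-orbit of $x$ is determined by the orbit set together with this left-right data. If this fails for odd $p$, we instead apply Lemma~\ref{lem: left-right determines map} on the $F$-cycle: since $U_{2m}=\emptyset$, $F$ and $G$ have the same left-right behaviour at each point, and we first need to show that $\mc O_F(x)=\mc O_G(x)$.

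For that, note that if $\mc O_F(x)\ne\mc O_G(x)$, the two decompositions of the $2p$-cycle differ. Counting the points to the right of $x$ visited by $F$ versus $G$ along their respective $p$-cycles gives a contradiction unless the two sets agree, using the equality of left-right behaviour for all iterates $F^m$, $G^m$. We expect this step to need the most care.

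Once (a)–(c) hold, Remark~\ref{rem: mixing maps are fully determined by periodic points} yields $F|_J=G|_J$. Combining this with Step~1 gives $f^2=g^2$.
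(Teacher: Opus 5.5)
\textbf{There is a genuine gap at exactly the step you flag as the main obstacle.} Your (a) and (b) silently depend on it as well. You need $\mc O_{f^2}(x)=\mc O_{g^2}(x)$ for shared orbits of even period. You propose to obtain this from equality of left-right behaviour, which you do get correctly from Theorem~\ref{thm: periodic orbits determine left-right behaviour}, since $U_n$ is open and misses the dense set $\Per(f,g)$. However, no counting argument on a single cycle can give it, because the underlying combinatorial claim is false.

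Here is a counterexample. Take points $1<2<\dots<6$, $f=(1\,6\,2\,3\,5\,4)$ and $g=(1\,5\,2\,3\,6\,4)$, so that $g=hfh^{-1}$ with $h$ the transposition of $5$ and $6$.
\begin{itemize}
\item For $x\le 4$, $h$ does not change on which side of $x$ a point lies.
\item For $x\in\{5,6\}$, use $f^3(5)=6$ and $f^3(6)=5$.
\end{itemize}
Hence $f^n(x)>x\iff g^n(x)>x$ for all $x$ and $n$, so all iterates $f^{2m}$ and $g^{2m}$ have identical left-right behaviour. Yet $\mc O_{f^2}(1)=\{1,2,5\}\neq\{1,2,6\}=\mc O_{g^2}(1)$, with $p=3$ prime. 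This is precisely the case in which you want to apply Lemma~\ref{lem: left-right determines map} to the $f^2$-cycle. Note that the two $f^2$-classes here are interleaved.

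What is missing is a \emph{spatial} reason why $g$ splits shared even orbits the same way $f$ does. The paper obtains this by distinguishing whether $f$ separates $\I$.
\begin{itemize}
\item \textbf{$f$ does not separate $\I$.} The $J_i$ of Theorem~\ref{thm: CovenMulvey} are $f$-invariant with $f|_{J_i}$ mixing. So stable $f$-periodic points of prime period are dense in $\bigcup J_i$. Remark~\ref{rem: mixing maps are fully determined by periodic points} then applies to $f,g$ themselves and gives $f=g$; the splitting issue never arises.
\item \textbf{$f$ separates $\I=L\cup C\cup R$.} Pieces in $C$ reduce to the first case via Fact~\ref{fact: smallest separation}. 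For $J\subseteq L$, the two halves of every $f$-orbit through $J$ lie in $J$ and in $f(J)\subseteq R$, so they are \emph{not} interleaved. The paper then proves $g(J)=f(J)$.
\begin{itemize}
\item This is straightforward when $C$ is non-degenerate, or is a point that is not an endpoint of $J$.
\item When $C$ is a common endpoint of $J$ and $f(J)$, it needs more. Transitivity of $f|_{J\cup f(J)}$ comes from Fact~\ref{fact: transitivity}. Remark~\ref{rem: equivalent characterisations of transitivity} transfers transitivity to $g$ via stable orbits. The mixing part of Corollary~\ref{cor: mixing maps are fully determined by periodic orbits} rules out $g|_{J\cup f(J)}$ being mixing. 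Finally, Theorem~\ref{thm: transitive vs mixing maps} is applied.
\end{itemize}
\end{itemize}

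Only after $g(J)=f(J)$ is established does $\mc O_{g^2}(x)=\mc O_g(x)\cap J=\mc O_f(x)\cap J=\mc O_{f^2}(x)$ follow. This yields both your (a) and (b), and the mixing part of Corollary~\ref{cor: mixing maps are fully determined by periodic orbits} then finishes the argument. Your Step~1 and item (c) are fine, but without this separation argument Step~2 does not go through.
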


To prove Corollary~\ref{cor: dense periodic orbits determine second iterate}, we require two classical results originally formulated for compact $\I$.
The first is due to Barge and Martin \cite{BargeMartin1985}; see also the monograph by Ruette \cite{Ruette2017} for a comprehensive discussion.
Our formulation below is closer to that of Coven and Mulvey \cite{CovenMulvey1988}, whose alternative proof extends to arbitrary intervals with only small modifications (see the appendix for details).

In the following, we say that $f\colon \I \to \I$ \emph{separates} $\I$ if there exists a decomposition $\I = L \cup C \cup R$ into closed intervals (closed as subsets of $\I$) with disjoint interiors, where $C$ may be degenerate (while $L$ and $R$ are non-degenerate), such that $f(L) \subseteq R$, $f(C) = C$, and $f(R) \subseteq L$.
We may call a decomposition with these properties also a \emph{separation} corresponding to $f$.
When given such a separation, we may assume without loss of generality that $L$ lies to the left of $C$ and $C$ lies to the left of $R$.

\begin{thm}\label{thm: CovenMulvey}
Let $f \colon \I \to \I$ be continuous, and suppose that $\overline{\Per(f)} = \I$.
If $f$ does not separate $\I$, then there exists a collection (possibly finite or empty) $J_1,J_2,\ldots$ of non-degenerate, closed (as subsets of $\I$), invariant subintervals of $\I$, with pairwise disjoint interiors, such that
\begin{itemize}
    \item $\left.f\right|_{J_i}$ is topologically mixing for each $i$, and
    \item $\left.f\right|_{\I \setminus \bigcup_i J_i}$ is the identity map.
\end{itemize}
If $f$ separates $\I$, $f^2$ does not, and the conclusion of the preceding paragraph holds for $f^2$.
\end{thm}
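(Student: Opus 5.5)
The statement to prove is Theorem~\ref{thm: CovenMulvey}, the extension of the Barge--Martin/Coven--Mulvey decomposition to arbitrary intervals. I will follow the Coven--Mulvey argument for compact intervals and check at each step that compactness is either not used or can be replaced by a local argument on compact subintervals.

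\textbf{Step 1: the non-identity part.} Let $F=\Fix_1(f)$, which is closed in $\I$. Let $W$ be the union of the maximal open intervals of $\I\setminus \mathring F$ on which $f\neq \Id$ somewhere. More precisely, consider the closure $M$ of $\I\setminus \mathring{F}$. On $\mathring F$ the map is the identity, so it suffices to decompose $M$.

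\textbf{Step 2: invariance of components.} Let $J$ be the closure of a component of $\I\setminus F$ (or of a union of such components glued along isolated fixed points). I would show $f(J)\subseteq J$ whenever $f$ does not separate $\I$. Suppose some point $y\in J$ has $f(y)$ beyond a fixed point $c$ bounding $J$. Density of periodic points then gives, near $y$, a periodic point whose orbit must cross $c$ back and forth. Using the standard lemma that with dense periodic points an interval with fixed endpoint either is invariant or its image straddles, one finds that $f$ swaps the two sides of some fixed point $c$. That is exactly a separation with $C=\{c\}$, or with $C$ a fixed interval.

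These arguments are local: each only concerns a fixed point $c$ and compact neighbourhoods. They transfer verbatim to non-compact $\I$, as long as images of compact sets are compact. One addition is needed: if $J$ is unbounded or contains an endpoint not in $\I$, invariance still follows, since $f(J)$ is connected and cannot cross the boundary fixed point.

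\textbf{Step 3: mixing on each piece.} On an invariant $J$ whose interior has no fixed points except possibly isolated ones with no swapping, dense periodic points imply transitivity. I would argue as follows. For an open $U\subseteq J$, the set $\bigcup_n f^n(U)$ is a finite union of intervals, cyclically permuted. Density of periodic points together with the absence of swapping forces that union to be dense in $J$. Then Barge--Martin's argument upgrades this to mixing, since a transitive map that is not mixing has a periodic-interval decomposition of period $2$, which would be a separation.

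\textbf{Step 4: second iterate.} If $f$ separates $\I$ as $L\cup C\cup R$, then $C$ is an invariant interval on which $f$ is a homeomorphism with dense periodic points. Hence $f|_C$ is $\Id$ or an involution, and in particular $f^2|_C=\Id$. Also $L$ and $R$ are $f^2$-invariant, and $f^2$ cannot separate $L$ without $f$ separating further; Coven--Mulvey show this by a period-counting/Sharkovsky argument. So I apply Steps 1--3 to $f^2$ on $L$ and on $R$ separately.

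\textbf{Main obstacle.} The hard part is Step 3's upgrade from density of $\bigcup f^n(U)$ to mixing on non-compact pieces. The compact proof uses that $f^n(U)$ eventually covers $J$ minus neighbourhoods of endpoints, via length arguments. I would handle this by exhausting $J$ with compact invariant-up-to-endpoint subintervals: show $f^n(U)\supseteq K$ eventually for each compact $K\subset \mathring J$. This uses that periodic points in $K$ have images in $\mathring J$.
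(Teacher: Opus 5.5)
\textbf{Step 2 fails, and Step 3 rests on it.} You claim that the closure of a component of $\I\setminus\Fix_1(f)$ is invariant whenever $f$ does not separate $\I$. This is false. The tent map $T(x)=1-|2x-1|$ on $[0,1]$ is mixing and has $\Fix_1(T)=\{0,2/3\}$, yet neither $[0,2/3]$ nor $[2/3,1]$ is invariant. The same map refutes your inference ``some orbit crosses the fixed point $c$ back and forth, hence $f$ swaps the two sides of $c$'': periodic orbits cross $2/3$ constantly, but $T$ does not separate $[0,1]$. Your alternative of ``gluing components along isolated fixed points'' gives no rule for which components belong together. A union glued at an isolated fixed point $c$ can be invariant without $f$ being transitive on it, e.g.\ if $[0,c]$ and $[c,1]$ are both invariant. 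So the hypothesis of your Step~3 does not single out the right pieces either.

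The missing idea is to use orbit closures. For each component $G_n$ of $\I\setminus\Fix_1(f)$, let $G_n^*=\overline{\bigcup_{k\ge0}f^k(G_n)}$, which is invariant by construction. Under the standing assumptions (pointwise non-wandering, non-separating), one shows:
\begin{itemize}
\item $G_n^*$ is a closed interval (Lemma~\ref{lem: coven mulvey lemma 2});
\item every endpoint of a closed invariant interval lying in $\I\setminus\{\0,\1\}$ is fixed (Lemma~\ref{lem: endpoints are fixed});
\item $f|_{G_n^*}$ is transitive, so two such sets are equal, disjoint, or meet in one point (Lemma~\ref{lem: transitivity on Gn}).
\end{itemize}
The $J_i$ are then the distinct $G_n^*$, and $f$ is the identity off their union by construction.

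For mixing, no exhaustion or length argument is needed, so your ``main obstacle'' disappears. Banks's results, valid on arbitrary intervals, reduce mixing to transitivity of $f^2|_{G_n^*}$. By Theorem~\ref{thm: transitive vs mixing maps}, failure of that transitivity gives $G_n^*=L\cup R$ with $L\cap R=\{c\}$ and $f$ swapping $L$ and $R$. Your ``which would be a separation'' skips the key point: this is a separation of $G_n^*$, not of $\I$. The fixed-endpoint lemma closes the gap. An endpoint of $G_n^*$ interior to $\I$ would be fixed, hence lie in $L\cap R=\{c\}$, which is impossible. So $G_n^*=\I$ and $f$ separates $\I$.

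\textbf{Step 4 contains a second false claim.} The map $f|_C$ need not be a homeomorphism, and $f^2|_C$ need not be the identity. Let $f(x)=1-x$ on $[0,1/3]\cup[2/3,1]$. On $C=[1/3,2/3]$, let $f$ be the piecewise linear map through $(1/3,2/3)$, $(4/9,1/3)$, $(5/9,2/3)$, $(2/3,1/3)$. This $f$ is continuous, has dense periodic points, and separates $[0,1]$ with this $C$, which is the smallest possible choice. Yet $f^2|_C$ is mixing. This is exactly why the paper treats pieces $J_i\subseteq C$ in the proof of Corollary~\ref{cor: dense periodic orbits determine second iterate}.

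Your piecewise strategy can be repaired by applying the first part to $f^2$ on each of $L$, $C$, $R$. What prevents $f^2$ from separating these pieces is that their endpoints interior to $\I$ are fixed by $f^2$, since $f$ swaps $L\cap C$ and $C\cap R$. No Sharkovsky-type argument is involved.

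\textbf{A missing part of the statement.} You never prove that $f^2$ does not separate $\I$, although the statement asserts it. The paper proves this for pointwise non-wandering maps: via the lemma on half-open intervals, via Lemma~\ref{lem: second iterate is not separating} for open ones, and by citing Coven and Mulvey in the compact case. This is the other place where non-compactness needs new arguments. Given that $f$ separates, it also follows directly. Comparing the two separations forces $L=L'$, $R=R'$ and $C=C'=\{c\}$. Hence $f^2(L)=\{c\}$, contradicting density of periodic points in $L$.
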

Suppose $f$ satisfies the above assumptions but separates $\I$ (with $L$ to the left of $C$ and $C$ to the left of $R$), and let $J_1, J_2, \ldots$ be the intervals obtained by applying the previous theorem to $f^2$.
It is easy to see that for each $J_i \subseteq L$, there exists $J_{i'} = f(J_i)$ in $R$, and if $J_i \subseteq L$ lies to the right of some $J_k$, then $J_{i'}$ lies to the left of the corresponding $J_{k'}$.
The analogous statement holds for the intervals $J_i$ in $R$.
\begin{fact}\label{fact: smallest separation}
In the situation and with the notation of the previous observation, let $K$ denote the closure of the union of all fixed points of $f$ and all intervals $J_i$ satisfying $f(J_i) = J_i$.
It is not hard to see that $K$ is an interval (possibly degenerate) that gives rise to a separation $\I = L' \cup K \cup R'$ corresponding to $f$.
Note that $f$ does not separate $K$.
\end{fact}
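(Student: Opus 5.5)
The plan is to work with the original separation $\I=L\cup C\cup R$ and the $f^2$-decomposition $J_1,J_2,\ldots$ from Theorem~\ref{thm: CovenMulvey}. The first step is to collapse each $J_i$ to a point. Every point outside $\bigcup_i J_i$ is fixed by $f^2$, and $f$ permutes the $J_i$ by the observation preceding the statement. Hence $f$ induces a bijection $\hat f$ of the collapsed, linearly ordered space $\hat\I$ with $\hat f^2=\Id$. The objects making up $K$, namely fixed points of $f$ and $f$-invariant $J_i$, are exactly the fixed points of $\hat f$. Set $K_0$ to be the convex hull of these objects and $K=\overline{K_0}$.

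The first claim is that $K$ is the closure of the union itself, i.e.\ that the union is ``convex modulo closure''. For this we note that the $f$-invariant objects cannot lie in the interior of $L$ or $R$, since $f(L)\ssq R$ and $f(R)\ssq L$. A nondegenerate invariant $J_i$ meeting $\mathring L$ must meet $R$, hence contains $C$, and this case is handled separately. So all of them lie in $C$ together with its endpoints.

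Next we show that every non-invariant piece $J_j\ssq C$ and every $f^2$-fixed but non-$f$-fixed point of $C$ lying between two $\hat f$-fixed elements leads to a contradiction. Here we use that $\hat f|_C$ is an involution of an ordered set. On a subinterval $[u,v]\ssq C$ with $u,v$ fixed, the map $f$ cannot reverse order on $(u,v)$, since that would force $f(u)>f(v)$. Continuity of $f$ on $[u,v]$ together with $\hat f^2=\Id$ should then force $f$ to be order preserving on the quotient of $[u,v]$. An order-preserving involution of a linear order is the identity, so every object between $u$ and $v$ is $\hat f$-fixed. This shows the union is dense in $K$, so $K$ is an interval, possibly degenerate. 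It also shows $f(K)=K$, since fixed objects are invariant and $K$ is their closure.

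For the separation, define $L'$ and $R'$ to be the closures of the components of $\I\setminus K$ to the left and right. On $\mathring L'$ there is no fixed point of $f$, so $f(x)-x$ has constant sign there; I will show it is positive by comparing with a point of $L\ssq L'$. To get $f(L')\ssq R'$, I plan to argue as follows. The objects in $L'$ are non-invariant, and $f$ maps each $J_i\ssq L'$ to some $J_{i'}$ and each $f^2$-fixed point to a distinct $f^2$-fixed point. If some object of $L'$ were mapped into $L'$, continuity along $L'$ together with the absence of fixed objects would produce an $\hat f$-fixed object between it and its image, or make $f$ order preserving on a stretch containing a $2$-cycle. Either contradicts the fact that $K$ contains all fixed objects. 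The symmetric argument gives $f(R')\ssq L'$, and nondegeneracy of $L'$ and $R'$ follows because $L\ssq L'$ and $R\ssq R'$ once $K\ssq C$ is known.

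Finally, $f|_K$ does not separate $K$. A separation $K=L''\cup C''\cup R''$ would have $f(L'')\ssq R''$, but the objects generating $K$ are all $f$-invariant and are dense in $K$. Hence $L''$, which is nondegenerate, contains an invariant $J_i$ or a fixed point in its interior, or arbitrarily close to one, and this contradicts $f(L'')\ssq R''$.

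The main obstacle is the monotonicity step inside $C$, i.e.\ ruling out $2$-cycles of objects nested between fixed ones. I expect to handle it with the intermediate value theorem applied to $f(x)-x$, using the order-reversal property from the preceding observation wherever the objects lie in $L$ or $R$.
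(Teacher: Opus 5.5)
\textbf{There is a genuine gap: the central convexity claim is asserted, not proved.} Your architecture is reasonable, and your last two steps would go through once that claim is available. A swapped pair of objects inside $L'$ forces opposite signs of $f(x)-x$, hence a fixed point in $\mathring L'$. Density of the fixed objects in $K$ rules out a separation of $K$.

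The claim itself is that every object lying between two fixed objects is fixed. Your only concrete input is that $\hat f$ cannot reverse the order of all of $[u,v]$, since $f(u)<f(v)$. This excludes a global reversal, but not the configurations that actually have to be ruled out:
(a) a \emph{fold}, i.e.\ a fixed point $w\in(u,v)$ and a non-fixed object $O$ with $u<O<w<\hat f(O)<v$;
(b) a non-fixed object in $(u,v)$ whose image lies outside $[u,v]$.
Since $f([u,v])$ may be much larger than $[u,v]$, $\hat f$ need not even restrict to an involution of the objects in $[u,v]$. So ``an order-preserving involution is the identity'' has nothing to act on. The sentence ``continuity together with $\hat f^2=\Id$ should force $f$ to be order preserving'' is exactly what needs proof. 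The sign of $f(x)-x$ cannot supply it: for a fold, the intermediate value theorem only returns a fixed point between $O$ and $\hat f(O)$, which is no contradiction. Your proof of $f(L')\ssq R'$ (which must exclude objects of $L'$ mapped into $K$) and the non-separation of $K$ both rest on this convexity.

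Two smaller points. First, an invariant $J_i$ meeting $\mathring L$ is not a case to be ``handled separately'' but is impossible. Indeed $J_i\cap L$ is $f^2$-invariant, so mixing of $f^2|_{J_i}$ forces $J_i\ssq L$, and then $J_i=f(J_i)\ssq L\cap R$. Second, collapsing each $J_i$ to a point is not well behaved. An $f$-invariant $J_i$ may share endpoints with non-invariant $J_k$, e.g.\ when $f$ swaps the endpoints of $J_i$ and swaps its two neighbours. Work with the ordered set of objects rather than a quotient space.

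\textbf{A route that avoids the monotonicity question.} The paper gives no argument beyond ``not hard to see'', but you can apply Theorem~\ref{thm: CovenMulvey} once more.
Every fixed object lies in the centre of every separation. The intersection of a decreasing chain of centres is again a centre: the defining conditions pass to the limit, and $f(\bigcap_\alpha C_\alpha)=\bigcap_\alpha C_\alpha$ by compactness. So Zorn's lemma gives a minimal centre $K$.
By minimality, $f|_K$ does not separate $K$. If $K$ is non-degenerate, $f|_K$ has dense periodic points, so Theorem~\ref{thm: CovenMulvey} covers $K$ by $f$-invariant intervals on which $f$, hence $f^2$, is mixing (these are among the $J_i$), together with fixed points of $f$.
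This yields at once that $K$ is the closure of the union of the fixed objects, that it is the centre of a separation, and that $f$ does not separate $K$.

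\textbf{Repairing your own route.} If you prefer to keep the order-theoretic approach, the missing ingredient is the following lemma: \emph{if $f(p)=p$ and $f^2(x)=x\neq f(x)$ with $x$ in no $f$-invariant $J_i$, then $p$ lies strictly between $x$ and $f(x)$.} It excludes (a) and (b), because a point strictly between two fixed points cannot have both of them between itself and its image.

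Proof sketch. Up to exchanging $x$ and $f(x)$ or reflecting $\I$, assume $p<x<f(x)$.
\begin{enumerate}
\item Pick a fixed point $w\in(x,f(x))$ and $z\in(p,x)$ with $f(z)=w$.
\item We have $f^2(z)=w\neq z$, and endpoints of the $J_i$ interior to $\I$ are $f^2$-fixed (Lemma~\ref{lem: endpoints are fixed} for $f^2$). Hence $z$ lies in the interior of some $J_m$.
\item Then $w=f^2(z)\in J_m$, so $x\in J_m$ and $J_m$ is not invariant. Hence $w$ is the common endpoint of $J_m=[a,w]$ and $f(J_m)\ssq[w,\1\rrbracket$.
\item This gives $a>p$ (as $p=f(p)\notin f(J_m)$), $f^2(a)=a$, and $f(a)>w$.
\item Repeating the argument on $[p,a]$ yields some $J$ containing $[z',w]\supsetneq J_m$, a contradiction.
\end{enumerate}
The driving facts are that $f^2=\Id$ off $\bigcup_i J_i$ and that $f$ permutes the $J_i$, not the sign of $f(x)-x$.
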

The second result we need was originally announced by Blokh in an unpublished paper \cite{Blokh1982} (see \cite{Blokh1995} for a published version) and was later independently discovered by Barge and Martin \cite{BargeMartin1985b, BargeMartin1987}.
The formulation below, which applies to arbitrary intervals, follows from \cite[Theorem~6.1]{Banks1997} and \cite[Theorem~7.1]{Banks1997} (see also the paragraph preceding the latter in \cite{Banks1997}).
Note that it is also used in the proof of Theorem~\ref{thm: CovenMulvey} discussed in the appendix.

Recall that an endomorphism $f$ on $\I$ is \emph{(topologically) transitive} if
for each pair of open sets $U,V\ssq \I$ there is $n\in \N$ such that $f^nU\cap V\neq \emptyset$.
\begin{thm}\label{thm: transitive vs mixing maps}
Let $f$ be a transitive endomorphism on $\I$. 
Then one of the following holds.
\begin{itemize}
    \item The map $f$ is topologically mixing.
    \item There exists $c \in (\0,\1)$ such that $c$ is the unique fixed point of $f$, $f(\llbracket \0,c]) = [c,\1\rrbracket$, $f([c,\1\rrbracket) = \llbracket \0,c]$, and both maps $f^2|_{\llbracket \0,c]}, f^2|_{[c,\1\rrbracket}$ are topologically mixing. 
\end{itemize}
\end{thm}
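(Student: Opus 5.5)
The plan is to follow the classical route for compact intervals (Blokh, Barge--Martin), adapted to a possibly non-compact $\I$. The dichotomy is governed by whether $f^2$ is transitive. The argument splits into two parts: \emph{(A)} if $f$ and $f^2$ are both transitive, then $f$ is topologically mixing; \emph{(B)} if $f$ is transitive but $f^2$ is not, then the second alternative of the theorem holds.

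First, preliminaries. A transitive $f$ has a fixed point in $(\0,\1)$: otherwise, say, $f(x)>x$ for all interior $x$, so every orbit is monotone. Orbits of points of a small open interval then cannot return near it, contradicting transitivity. The set $\Fix_1(f)$ also has empty interior, since an interval of fixed points cannot visit other open sets. Next, for any non-degenerate subinterval $J$, the sets $f^n(J)$ are intervals and $\bigcup_{n\ge 0} f^n(J)$ is dense. Moreover, for some $m\ge1$ we have $f^m(J)\cap J\neq\emptyset$, and the chain of overlapping intervals $J, f^m(J), f^{2m}(J),\dots$ has connected union. I will also use that periodic points are dense for transitive interval maps. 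This is a classical consequence of Sharkovsky/Coven--Hedlund type arguments, and the fixed-point-by-intermediate-value argument applies locally: if $f^m(J)\supseteq J$ then $f^m$ crosses the diagonal on $J$.

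For (B), I choose non-empty open intervals $U,V$ with $f^{2n}(U)\cap V=\emptyset$ for all $n\ge 0$. Using the chain argument for $f^2$, I form the interval $A=\overline{\bigcup_n f^{2n}(W)}$ for a suitable subinterval $W\subseteq U$, and set $B=f(A)$. Transitivity of $f$ gives $\overline{A\cup B}=\I$. By construction $f^2(A)\subseteq A$, and since $f^2$ is not transitive, $A$ and $B$ have disjoint interiors. Hence $A=\llbracket\0,c]$ and $B=[c,\1\rrbracket$ (or the reverse), and $f$ swaps them. Then $c=A\cap B$ is fixed, and every other point moves to the opposite half, so $c$ is the unique fixed point. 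Transitivity of $f$ implies transitivity of $f^2$ on each half: an orbit of $f$ alternates between the halves, so even times realise all visits within a half. The half-interval maps $f^2|_{\llbracket\0,c]}$ and $f^2|_{[c,\1\rrbracket}$ cannot be separated again. If $(f^2)^2$ were not transitive on $\llbracket\0,c]$, then repeating the construction would give a further swap of $\llbracket\0,c]$ around a point $c'$ fixed by $f^2$. That point would come with its $f$-image $f(c')\in[c,\1\rrbracket$. I expect to derive a contradiction with transitivity of $f$ by showing the four resulting quarter-intervals form a cycle of period $4$ invariant for $f$ whose closure argument forces a gap. So (A) applies to $f^2$ on each half and gives mixing there.

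For (A), the main obstacle, the goal is to show the following: for every non-degenerate $J$ and every compact $K\subseteq(\0,\1)$, we have $f^n(J)\supseteq K$ for all large $n$. The first attempt will be the following. Pick a periodic point $x\in\mathring J$ of period $p$. The intervals $f^{kp}(J)$ then all contain $x$, so their union $J_\infty$ is an interval, and $f^{p}(J_\infty)\subseteq J_\infty$. Transitivity forces $J_\infty\cup f(J_\infty)\cup\dots\cup f^{p-1}(J_\infty)$ to be dense. If $J_\infty$ is not dense, the images form a cycle of at most $p$ intervals. Such a cycle with $\ge3$ intervals contradicts transitivity of $f$, because an interior fixed point must lie in one of them. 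The case of $2$ intervals contradicts transitivity of $f^2$. Hence $J_\infty$ is dense, so $f^{kp}(J)$ eventually covers $K$, and because these intervals also contain $x$ the coverage persists for $f^{kp}$. To pass from multiples of $p$ to all large $n$, I apply the same reasoning to $f^{r}(J)$ for $r=0,\dots,p-1$. The step I expect to be most delicate is making the cycle-of-intervals contradiction rigorous near non-compact endpoints. There I would work with $K$ compact and the endpoints of $\I$ treated vacuously, as the paper's conventions allow.
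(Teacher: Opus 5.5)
\textbf{Verdict: there is a genuine gap, plus a separate logical error in part (A).} The paper does not prove this statement itself. It quotes it from Banks's theory of regular periodic decompositions: for interval maps, transitivity of $f$ and $f^2$ gives total transitivity and hence mixing, and otherwise one gets the two-interval swap. Your outline has the same shape. However, the step that Banks's theory supplies is exactly the one you assert without proof: that a non-transitive iterate yields a cycle of \emph{intervals with pairwise disjoint interiors}.

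In (B), the chain argument only makes $\bigcup_k f^{2mk}(W)$ connected for some $m$ with $f^{2m}(W)\cap W\neq\emptyset$, and nothing lets you take $m=1$ with $W\subseteq U$. That set is $f^{2m}$-invariant, not $f^2$-invariant. The $f^2$-invariant set $\overline{\bigcup_n f^{2n}(W)}$ is a priori a union of several intervals, so ``$A$ is an interval, hence $A=\llbracket\0,c]$'' presupposes the conclusion.

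In (A), the closed intervals $K_i=\overline{f^i(J_\infty)}$ cover $\I$ and satisfy $f(K_i)\subseteq K_{i+1}$ (indices mod $p$). Nothing you say prevents two of them from overlapping with non-empty interior. Then there is no cycle of intervals in your sense and the fixed-point count does not apply. Merging overlapping pieces may also leave a single class although $K_0\neq\I$, in which case neither transitivity of $f$ nor that of $f^2$ has been contradicted.

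Both points can be repaired.
For (B), take $W\subseteq U$ containing a periodic point, so that $X=\overline{\bigcup_n f^{2n}(W)}$ and $Y=\overline{\bigcup_n f^{2n+1}(W)}$ are finite unions of non-degenerate closed intervals. Then $X\cup Y=\I$, and $X\cap Y$ is closed, $f$-invariant and disjoint from $V$, hence has empty interior. So $f$ permutes the components of $X$ and $Y$ in a single cycle. Since the interior fixed point lies in at most two components, that cycle is $\{\llbracket\0,c],[c,\1\rrbracket\}$.

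For (A), observe that the relation $\mathring K_i\cap\mathring K_j\neq\emptyset$ is preserved by $i\mapsto i+1$, because a transitive interval map is constant on no interval. It therefore defines a vertex-transitive interval graph. Being chordal, this graph has a simplicial vertex, hence all vertices are simplicial and it is a disjoint union of cliques. By Helly, the cases close as follows:
a single clique makes $\bigcap_i K_i$ a proper closed $f$-invariant interval with non-empty interior, contradicting transitivity of $f$;
two cliques contradict transitivity of $f^2$;
three or more cliques contradict the interior fixed point.

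Separately, the last step of (A) is wrong as written. Density of $J_\infty$ only gives $K\subseteq\bigcup_{k\le N}f^{kp}(J)$, not $K\subseteq f^{kp}(J)$ for a single $k$. The intervals $f^{kp}(J)$ are not nested, so their common point $x$ does not make coverage persist. Persistence must come from periodic points on the target side. Choose periodic $u<\min K$ and $v>\max K$, let $m$ be a common multiple of $p$ and their periods, and run your density argument for $f^m$ instead of $f^p$; it applies verbatim since $f^m(x)=x$. Once $u,v\in f^{km}(J)$, they lie in $f^{k'm}(J)$ for all $k'\ge k$, so $K\subseteq[u,v]\subseteq f^{k'm}(J)$. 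The other residues are handled through $f^r(J)\ni f^r(x)$, which is again fixed by $f^m$.

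Two minor remarks. First, the end of (B) needs no quarter-interval analysis: $c$ is itself a fixed point of $f^2$ in $\llbracket\0,c]$, which is incompatible with the second alternative for $f^2|_{\llbracket\0,c]}$. Second, the paper derives density of periodic points from this very theorem, so you must take that input from an independent proof.
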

\begin{rem}\label{rem: equivalent characterisations of transitivity}
As a consequence of the previous statement, one obtains the well-known result that transitive maps have dense periodic points \cite{Sharkovsky1964}; see also \cite{Ruette2017} for a direct proof in the case of compact $\I$.
It is moreover a classical fact that an endomorphism $f$ on $\I$ is transitive if and only if there exists $x \in \I$ such that $\mc O(x)$ is dense in $\I$.
Combining these two observations, we conclude that an endomorphism $f$ on $\I$ is transitive if and only if, for every compact interval $I \subseteq \I$ and every $\eps > 0$, there exists $x \in \Per(f)$ whose orbit intersects $I$ in an $\eps$-dense set.
In fact, since $f$ or $f^2$ (when restricted to $\llbracket \0,c]$ with $c$ as in Theorem~\ref{thm: transitive vs mixing maps}) is mixing, we find stable periodic points arbitrarily close to such $x$ (similar to how we showed 
denseness of stable prime periodic points in the proof of Corollary~\ref{cor: mixing maps are fully determined by periodic orbits}). 
It follows that $f$ is transitive if and only if for every compact interval $I \subseteq \I$ and every $\eps > 0$, there exists $x \in \Per^s(f)$ whose orbit intersects $I$ in an $\eps$-dense set.
 \end{rem}
\begin{fact}\label{fact: transitivity}
    Note that if $f$ separates $\I$ with a decomposition in which $C$ is degenerate and $\I$ is the union of exactly two distinct intervals $J_1$ and $J_2$ on which $f^2$ is mixing, then $f$ is transitive (and we are in the second case of Theorem~\ref{thm: transitive vs mixing maps}).
    Indeed, $f^2$ admits a dense orbit in $J_1$ (mixing implies transitivity), and hence, as $f(J_1) = J_2$, $f$ admits a dense orbit in $\I$.
\end{fact}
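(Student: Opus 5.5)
The plan is to verify transitivity straight from the definition via open sets, using only that $f^2$ is mixing on each of the two pieces and that $f$ swaps them. First I fix the setup. Write $C=\{c\}$, so that $\I=L\cup R$ with $L\cap R=\{c\}$, $f(L)\ssq R$ and $f(R)\ssq L$. Since $f^2$ leaves $L$ and $R$ invariant and the two mixing intervals $J_1,J_2$ have disjoint interiors and cover $\I$, I will identify $J_1=L$ and $J_2=R$. This is the first routine step: an $f^2$-invariant interval on which $f^2$ is mixing cannot straddle $c$, since points near $c$ on either side are sent by $f^2$ back to their own side.

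The key intermediate step is that $f(J_1)$ is dense in $J_2$, and symmetrically that $f(J_2)$ is dense in $J_1$. Mixing of $f^2|_{J_2}$ implies transitivity, hence a dense $f^2$-orbit in $J_2$. That orbit, with its first point removed, lies in $f^2(J_2)$, so $f^2(J_2)$ is dense in $J_2$. On the other hand,
\[
f^2(J_2)=f\bigl(f(J_2)\bigr)\ssq f(J_1),
\]
because $f(J_2)\ssq J_1$. Hence $f(J_1)\ssq J_2$ is dense in $J_2$, and the same argument gives the symmetric statement.

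Now take nonempty open sets $U,V\ssq\I$. After shrinking, I may assume each lies in the interior of $J_1$ or of $J_2$, which gives four cases.
\begin{itemize}
\item If $U,V$ lie in the same $J_k$, mixing of $f^2|_{J_k}$ gives $n$ with $f^{2n}(U)\cap V\neq\emptyset$.
\item If $U\ssq J_1$ and $V\ssq J_2$, the density of $f(J_1)$ in $J_2$ makes $W=f^{-1}(V)\cap J_1$ a nonempty open subset of $J_1$. Mixing of $f^2|_{J_1}$ then yields $n$ with $f^{2n}(U)\cap W\neq\emptyset$, hence $f^{2n+1}(U)\cap V\neq\emptyset$.
\item The remaining case is symmetric.
\end{itemize}
Thus $f$ is transitive.

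Finally, Theorem~\ref{thm: transitive vs mixing maps} applies, and we are in its second alternative, because $f$ is not mixing: $f^n(J_1)$ lies in $J_1$ or in $J_2$ according to the parity of $n$, so it cannot meet $\mathring J_2$ for all large $n$. Alternatively, one can exhibit the dense $f$-orbit directly, namely that of a point with dense $f^2$-orbit in $J_1$, using the density of $f(J_1)$ in $J_2$ established above. The only step needing any care is that density of $f(J_1)$ in $J_2$, since $f|_{J_1}$ need not be onto when the intervals are non-compact. Passing through $f^2(J_2)$ as above avoids any surjectivity or compactness assumption.
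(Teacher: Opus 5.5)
Your argument is correct. It rests on the same mechanism as the paper's: $f^2$ is transitive on each of the two halves, and $f$ swaps them.

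The packaging differs. The paper takes $f(J_1)=J_2$ for granted; in the setting where the Fact is applied, $f(J_i)$ is by construction the partner interval. It then pushes a dense $f^2$-orbit in $J_1$ forward by $f$ to obtain a dense $f$-orbit in $\I$. Finally, it relies on the classical equivalence between transitivity and the existence of a dense orbit (Remark~\ref{rem: equivalent characterisations of transitivity}).

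You instead check the open-set definition directly through the four-case split. You also replace the equality $f(J_1)=J_2$ by the weaker statement that $f(J_1)$ is dense in $J_2$, which you obtain neatly from $f^2(J_2)\subseteq f(J_1)$. This makes the Fact self-contained, at the cost of a few extra lines. Your closing remark on exhibiting the dense orbit directly is precisely the paper's argument, with density in place of equality.

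Incidentally, the surjectivity issue you flag does not actually arise: your density lemma already yields $f(J_1)=J_2$. A compact image of $J_1$ (resp. $J_2$) cannot be dense in a non-compact $J_2$ (resp. $J_1$), which rules out a half-open $\I$. For compact or open $\I$, an interval containing $f(c)=c$ that is dense in $J_2$ must be all of $J_2$.
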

\begin{proof}[Proof of Corollary~\ref{cor: dense periodic orbits determine second iterate}]
We first assume that $f$ does not separate $\I$ and use the notation from Theorem~\ref{thm: CovenMulvey}, where the intervals $J_i$ correspond, a priori, to $f$.
Note that the proof of the second part of Corollary~\ref{cor: mixing maps are fully determined by periodic orbits} shows that, for each $J_i$, we have $\overline{\bigcup_{p \text{ prime}} \Per_p^s(\left.f\right|_{J_i})} = J_i$.
Therefore, since $\PER^s(\left.f\right|_{J_i}) \ssq \PER^s(f) \ssq \PER(g)$, it follows that
\[
\overline{\bigcup_{p \text{ prime}} \Per_p(f,g)} = \overline{\bigcup_i \bigcup_{p \text{ prime}} \Per_p(\left.f\right|_{J_i},\left.g\right|_{J_i})} = \overline{\bigcup_i J_i}.
\]
Moreover, since $f$ is the identity on $\I \setminus \bigcup_i J_i$, and $\overline{\Per(f,g)}=\I$, we necessarily have
\[
\overline{\Per_1(f,g)} \cap \big(\I \setminus \overline{\bigcup_i J_i}\big) = \overline{\Per(f,g)} \cap \big(\I \setminus \overline{\bigcup_i J_i} \big)= \I \setminus \overline{\bigcup_i J_i}.
\]
We conclude that $\Per_1(f,g) \cup \bigcup_{p \text{ prime}} \Per_p(f,g)$ is dense in $\I$.
Now $f = g$ (and hence, trivially, $f^2 = g^2$) follows with Remark~\ref{rem: mixing maps are fully determined by periodic points}.

Suppose now that $f$ separates $\I$ with separation $\I = L \cup C \cup R$, and consider the sequence $J_1, J_2, \ldots$ of intervals corresponding to $f^2$ (as given by Theorem~\ref{thm: CovenMulvey}).
First, we observe that similar as in the previous case, $f = g$ on $\I \setminus \bigcup J_i$.
Indeed, this equality holds trivially on $\Per_1(f,g)\cup \Per_2(f,g)$ and extends to all of $\I \setminus \bigcup J_i$ by continuity, since by assumption,
\[
\left( \overline{\Per_1(f,g)\cup \Per_2(f,g)}\right) \cap \left(\I \setminus \bigcup J_i\right) = \overline{\Per(f,g)} \cap \left(\I \setminus \bigcup J_i\right) = \I \setminus \bigcup J_i.
\]
Here, we used that points in $\I \setminus \bigcup J_i$ are fixed under $f^2$ and thus belong to
$\Per_1(f)\cup \Per_2(f)$.

To proceed with the points in $\bigcup J_i$, note that each $J_i$ is contained in either $L$, $C$, or $R$. 
This follows since $J_i \cap L$, $J_i \cap C$, and $J_i \cap R$ are invariant under $f^2$, meaning that two of these intersections must have empty interior for $f^2$ to be mixing on $J_i$.

Suppose first that $J_i\ssq C$.
Without loss of generality, we may assume that $f$ does not separate $C$ (Fact~\ref{fact: smallest separation}).
Now, $\overline{\Per(f,g)}=\I$ implies that $C$ is invariant not only under $f$ but also under $g$ (see also the next paragraph for similar reasoning), so that the equality of $f^2$ and $g^2$ on $J_i$ follows as in the first part of the proof.

It remains to consider the case where $J_i \subseteq L$ (the case of $J_{i} \subseteq R$ is analogous).
Observe that $J_i\cup f(J_i)$ is invariant under $g$: every $f$-periodic orbit meeting $J_i\cup f(J_i)$ is entirely contained in it, so by denseness of $\Per(f,g)$ we obtain a dense set of $g$-periodic orbits contained in $J_i\cup f(J_i)$, giving the desired invariance.

In fact, we obtain that $g(J_i) = f(J_i)$ (and hence $g(f(J_i))=J_i$ and $g^2(J_i)=J_i$).
If $C$ is non-degenerate, this follows since 
\[g\big(\Per(f,g) \cap J_i\big) \cap \big(\Per(f,g) \cap f(J_i)\big)\neq \emptyset\] 
while $C\cap g(J_i)$ must have empty interior (so that, in fact, $g\big(\Per(f,g) \cap J_i\big) = \Per(f,g) \cap f(J_i)$); a similar argument applies if $C$ is degenerate and its unique element is not an endpoint of $J_i$.
If $C$ is degenerate and not disjoint from $J_i$ (and hence not disjoint from $f(J_i)$), then $g(J_i) = f(J_i)$ is a consequence of the transitivity of $\left.f\right|_{J_i \cup f(J_i)}$ (Fact~\ref{fact: transitivity}) and Remark~\ref{rem: equivalent characterisations of transitivity}: together, they give the existence of arbitrarily dense stable periodic orbits of $f$ and hence of arbitrarily dense periodic orbits of $g$ (since $\PER^s(f) \subseteq \PER(g)$), which in turn implies the transitivity of $\left.g\right|_{J_i \cup f(J_i)}$ (again Remark~\ref{rem: equivalent characterisations of transitivity}).
We note that $\left.g\right|_{J_i\cup f(J_i)}$ cannot be mixing, for if it were, then Corollary~\ref{cor: mixing maps are fully determined by periodic orbits} would imply that also $\left.f\right|_{J_i\cup f(J_i)}$ is mixing.
Observing that the unique fixed point of $f$ is necessarily stable (Theorem~\ref{thm: transitive vs mixing maps}) and hence also the unique fixed point of $g$, we obtain the desired equality (see again Theorem~\ref{thm: transitive vs mixing maps}).

Now, note that for any point $x\in \Per(\left.f^2\right|_{J_i})$, $\mc O_{f^2}(x)$ is given by $\mc O_f(x)\cap J_i$.
As a consequence, 
$\PER^s(\left.f^2\right|_{J_i})\ssq \PER(\left.g^2\right|_{J_i})$.
Indeed, given $\mc O_{f^2}(x)\in \PER_p^s(\left.f^2\right|_{J_i})$, there is some $y\in \mc O_{f^2}(x)\cap \Fix^s_p(f^2)=\mc O_{f^2}(x)\cap \Fix^s_{2p}(f)$ and hence  $y\in \mc O_{f}(x)\cap \Fix^s_{2p}(f)$; in other words, 
$\mc O_{f}(x)\in \PER^s(f)\ssq \PER(g)$, that is, $\mc O_f(x)=\mc O_g(x)$.
This shows the desired inclusion since also
$\mc O_{g^2}(x)=\mc O_g(x)\cap J_i$ (for by the above, $g(J_i)=f(J_i)$ and $g(f(J_i))=J_i$).
We similarly obtain $\PER^s(\left.g^2\right|_{J_i})\ssq \PER(\left.f^2\right|_{J_i})$.
With the second part of Corollary~\ref{cor: mixing maps are fully determined by periodic orbits}, this implies $\left.f^2\right|_{J_i}=\left.g^2\right|_{J_i}$ for each of the above intervals $J_i$.
\end{proof}

\subsection{General endomorphisms on compact intervals}
We now take a look at the implications of Lemma~\ref{lem: left-right determines map} and Theorem~\ref{thm: periodic orbits determine left-right behaviour} for general endomorphisms on compact intervals $\I$ to show that, locally, wherever there is entropy, the period spectrum determines a power of the respective map.
Concretely, we will prove a strengthened version of Theorem~\ref{introthm: basic sets}.
To that end, we begin by revisiting some terminology and background concerning the ergodic decomposition of endomorphisms on compact intervals due to Blokh \cite{Blokh1995}.

\subsubsection{Background: spectral decomposition of interval maps}
Recall that the \emph{$\omega$-limit (set)} of a point $x$ under a map $f$ is the invariant set given by
\[\omega_f(x)=\omega(x) = \bigcap_{n=0}^{\infty} \overline{\{f^k(x) : k \ge n\}}.\]
On compact intervals $\I$, $\omega$-limits behave particularly well: given an endomorphism $f$ on $\I$, the collection of all $\omega$-limits $\omega_f=\{\omega_f(x)\:x\in \I\}$ is compact when equipped with the Hausdorff metric \cite{BlokhBrucknerHumkeSmital1996}.
As a direct consequence, every chain $C$ of $\omega$-limit sets of $f$---every subset $C$ of $\omega_f$ such that for any pair $\omega, \omega' \in C$,  $\omega \subseteq \omega'$ or $\omega' \subseteq \omega$ holds---possesses an upper bound $\overline{\omega} \in \omega_f$, that is, $\omega \subseteq \overline{\omega}$ for every $\omega \in C$, see also \cite{Sharkovsky1966}.
It follows from Zorn's lemma that every $\omega$-limit set is contained in a maximal (with respect to set inclusion) $\omega$-limit set.

Blokh's spectral decomposition theorem implies that there are essentially three types of maximal $\omega$-limit sets. 
We refer the reader to \cite{Blokh1995} for full details, recapping here only what is strictly necessary for what follows. 

First, there are periodic orbits that form maximal $\omega$-limit sets; we denote the union of all such orbits by $X_f$. 
Second, there are \emph{solenoidal sets}, which extend odometers and thus lack periodic points; whilst a solenoidal set $S$ is not necessarily an $\omega$-limit set in its own right, it contains a unique maximal $\omega$-limit set, which is itself solenoidal and, moreover, coincides with $\omega(x)$ for each point $x\in S$. 
Finally, \emph{basic sets} are the only maximal $\omega$-limit sets with positive topological entropy (see \cite{AdlerKonheimMcAndrew1965} for the definition of topological entropy).

\begin{thm}[cf. {\cite[Theorem~5.4]{Blokh1995}}]\label{thm: spectral decomposition}
Let $f : \I \to \I$ be a continuous map. 
Then there exist an at most
countable family of basic sets $\{B_i\}$
and a family of solenoidal sets
$\{S^{(\alpha)}\}$ such that
\[
\overline{\Per(f)} = X_f \cup
          \bigcup_{\alpha} S^{(\alpha)}
          \cup \bigcup_{i} B_i,
\]
where, on the right-hand side, intersections are possible only between
different basic sets; any three such sets have an empty intersection and the intersection of two basic sets is finite.
\end{thm}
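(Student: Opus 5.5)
The plan is not to reprove the theorem from scratch but to deduce it from Blokh's construction in \cite{Blokh1995}, recalling just enough of the construction to see that the stated decomposition and intersection properties come out of it. The organising object is a \emph{cycle of intervals}: a finite union $M=I_1\cup\dots\cup I_k$ of closed intervals with disjoint interiors that $f$ permutes cyclically. Every point $x\in\overline{\Per(f)}$ is classified by the family of cycles of intervals containing it, ordered by inclusion.

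First, for a given $x\in\overline{\Per(f)}$, I would look at maximal chains of cycles of intervals $M_1\supsetneq M_2\supsetneq\dots$ containing $x$. Their periods satisfy $k_1\mid k_2\mid\cdots$. There are two cases.
\begin{itemize}
\item If there are infinitely many such cycles with periods $k_j\to\infty$, put $Q=\bigcap_j M_j$. The solenoidal set $S=Q\cap\overline{\Per(f)}$ carries the odometer factor coming from the cyclic permutation of the components. Moreover, $\omega(y)$ is the same unique maximal $\omega$-limit set inside $S$ for every $y\in S$.
\item If the chain stabilises at a minimal cycle $M$, consider the prolongation set
\[
B(M,f)=\{y\in M : \overline{\operatorname{orb}(U)}\supseteq M \text{ for every relative neighbourhood } U \text{ of } y \text{ in } M\}.
\]
This set is either a periodic orbit, which is where $X_f$ enters, or an infinite perfect set. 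In the infinite case, collapsing the complementary intervals of $B(M,f)$ in $M$ semiconjugates $f^k|_{I_1}$ onto a transitive, in fact mixing up to one more square, interval map (via Theorem~\ref{thm: transitive vs mixing maps}). Positive entropy of the basic set follows from this, together with its maximality among $\omega$-limit sets.
\end{itemize}

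Second, I would show that these three kinds of sets exhaust $\overline{\Per(f)}$. Take a periodic point or a limit of periodic points $x$. Either $x$ lies in a solenoidal set, or in some $B(M,f)$ with $M$ minimal, or else the orbit of every small neighbourhood of $x$ fails to fill any cycle. In the last case I would argue that $x$ belongs to a periodic orbit that is a maximal $\omega$-limit set, hence to $X_f$. The tools here are Sharkovsky's result that $\overline{\Per(f)}$ equals the closure of the recurrent points, and the compactness of $\omega_f$ in the Hausdorff metric.

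Third, for the intersection statements, I would use the fact that distinct cycles of intervals either are nested or have components meeting only at endpoints. Two different basic sets $B(M,f)$ and $B(M',f)$ can therefore only share endpoints of components of the cycles, which gives finiteness. A point in three basic sets would have to be a common endpoint of components of three pairwise non-nested cycles, which is impossible on the line. Solenoidal sets contain no periodic points. They also cannot meet a basic set, because a point common to both would lie in every $M_j$ while also lying in a minimal cycle; that would force the chain to stabilise. The same reasoning separates $X_f$ from the other sets.

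I expect the main obstacle to be the exhaustion step, namely showing that points of $\overline{\Per(f)}$ not captured by solenoids or by prolongation sets really lie in $X_f$, since this is exactly where the subtleties of non-isolated periodic points arise. In the write-up I would not redo this; I would cite \cite[Theorem~5.4]{Blokh1995} for it and check only that the version stated here, including the intersection claims, matches Blokh's formulation.
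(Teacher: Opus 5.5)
This is the paper's route. The authors give no proof of this statement; they import it from \cite[Theorem~5.4]{Blokh1995}, in the compact setting of that subsection. Your final plan, citing Blokh and checking that the formulation including the intersection claims matches his, does exactly the same.

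Do not present your heuristic sketch as supporting argument, though, because several of its steps fail as written.
\begin{itemize}
\item The ``fact'' that two distinct cycles of intervals are either nested or have components meeting only at endpoints is false in general. For $f=\Id$, the invariant intervals $[0,2/3]$ and $[1/3,1]$ are neither. It only becomes true after normalising the cycles, for instance by taking each component to be the convex hull of its intersection with the perfect basic set.
\item Your triple-intersection argument only treats pairwise non-nested cycles. Basic sets can also meet when their cycles are nested, $M'\subsetneq M$. Then a common point must lie in $\partial M'$, since a relative neighbourhood inside $M'$ has orbit closure in $M'\neq M$. This case needs its own argument.
\item A point common to a solenoidal set and $B(M,f)$ does not immediately ``force the chain to stabilise''. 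One only gets that it lies on the boundary of $M\cap M_j$ for large $j$.
\item In the exhaustion step you cannot conclude that a leftover point of $\overline{\Per(f)}$ is periodic. Showing that non-periodic limits of periodic points are captured by solenoidal or basic sets is precisely the substance of Blokh's theorem.
\end{itemize}
Since you defer all of this to the citation, the proposal itself is fine. Also note that Blokh's result, like your appeal to compactness of $\omega_f$, is for compact $\I$, which is the only setting in which the paper uses it.
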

\begin{rem}\label{rem: only basic sets have several periodic orbits}
    It follows from the above that if distinct periodic orbits belong to one and the same maximal $\omega$-limit set $\omega$, then $\omega$ is necessarily a basic set.
    Indeed, the omega limit sets contained in $X_f$ are single periodic orbits and those contained in the union of solenoidal sets are themselves solenoidal and, consequently,
    contain no periodic orbits.
\end{rem}

We collect some properties of basic sets and refer the reader to \cite{Blokh1995} for additional background.
First of all, each basic set $B$ is perfect (no isolated points).
Second, each basic set $B$ corresponds to a \emph{periodic interval} $J$, that is,
a closed interval $J$ for which there is $p\in \N$ with $f^pJ=J$ and $f^iJ\cap f^kJ=\emptyset$ for $i\neq k\in \{0,\ldots,p-1\}$.
Specifically, writing $J^*=\bigsqcup_{k=0}^{p-1} f^kJ$, the relation between $B = B_f(J^*)$ and $J$ is given by
\[
B_f(J^*) = \big\{ x \in J^* \:
\text{for every relative neighbourhood } U \text{ of } x \text{ in } J^* \text{ we have }
J^* = \overline{\bigcup_{k\geq 0} f^k(U)} \big\}.
\]
Further, for each $k\in \{0,\dots,p-1\}$, there is a monotone semiconjugacy $\phi_k\: f^kJ\to \J$
between $\left.f^{p}\right|_{f^kJ}$ and a transitive endomorphism $F_k$ on some compact interval $\J$, that is,
$\phi_k$ is monotone, continuous, surjective, and satisfies $\phi_k\circ\left.f^{p}\right|_{f^kJ}=F_k\circ \phi_k$ \cite[Theorem~4.1]{Blokh1995}.
This semiconjugacy is such that 
\begin{align}\label{eq: only endpoints of gaps are identified}
I\ssq f^kJ \text{  is an interval with } \phi_k I \text{ a singleton if and only if } \mathring I\cap B=\emptyset, 
\end{align}
see \cite[Step B8 (p.~40)]{Blokh1995}.
We may call an interval $I$ as in \eqref{eq: only endpoints of gaps are identified} a \emph{gap} of $B$ if it is maximal (that is, not strictly contained in another such interval) and refer to $F_k$ as an \emph{interval model} of $\left.f^{p}\right|_{f^kJ}$.
One readily obtains the following
\begin{fact}\label{fact: periodic orbits of basic sets}
In the above situation and using the above notation, suppose $x$ is a $q$-periodic point of $F_k$ for some $k \in \{0, \dots, p-1\}$. 
If $\#\phi_k^{-1}(F_k^i x) > 1$ for every $i \in \{0, \dots, q-1\}$ (which means that $\phi_k^{-1}(F_k^i x)$ coincides with a gap for every $i$), then either $\phi_k^{-1}(x)$ intersects a unique $2q$-periodic orbit of $\left.f^p\right|_{f^kJ\cap B}$, or it intersects at least one and at most two $q$-periodic orbits of $\left.f^p\right|_{f^kJ\cap B}$.
If, however, $\#\phi_k^{-1}(F_k^i x) = 1$ for some $i \in \{0, \dots, q-1\}$, then
$\phi_k^{-1}(x)$ intersects a sole $q$-periodic orbit of $\left.f^p\right|_{f^kJ\cap B}$.
\end{fact}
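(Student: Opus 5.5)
The plan is to lift periodic orbits of $F_k$ to periodic orbits of $\left.f^p\right|_{f^kJ\cap B}$ by analysing the fibres $\phi_k^{-1}(F_k^i x)$, $i=0,\dots,q-1$. Each fibre is either a single point of $B$ or a gap, that is, a maximal interval whose interior misses $B$. By \eqref{eq: only endpoints of gaps are identified}, the fibres are compact intervals whose endpoints lie in $B$. Note first that $\phi_k$ is monotone and semiconjugates $\left.f^p\right|_{f^kJ}$ to $F_k$. Hence $f^p$ maps $\phi_k^{-1}(F_k^i x)$ into $\phi_k^{-1}(F_k^{i+1}x)$. Consequently $f^{pq}$ maps the compact interval $I_0:=\phi_k^{-1}(x)$ into itself, and so it has a fixed point there. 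The real task is to locate such fixed points in $B$ and to control their periods.

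In the first case, some fibre $\phi_k^{-1}(F_k^i x)$ is a singleton $\{y\}$. Then $f^{pq}(y)=y$, since $f^{pq}$ maps this fibre into itself. Any $f^p$-periodic point $z$ in $I_0$ satisfies $f^{pi}(z)\in\phi_k^{-1}(F_k^ix)=\{y\}$, so $z=f^{p(q-i)}(y)$ is uniquely determined. The point $y$ lies in $B$, because a singleton fibre which is not in $B$ would be contained in the interior of a gap by \eqref{eq: only endpoints of gaps are identified}. Its period under $f^p$ is exactly $q$: if $f^{pm}(y)=y$, then $F_k^m(F_k^ix)=F_k^ix$, which forces $q\mid m$. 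This gives a sole $q$-periodic orbit meeting $I_0$.

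In the second case, every fibre is a nondegenerate gap $I_i=[a_i,b_i]$ with $a_i,b_i\in B$. Here I would show that $f^p(\{a_i,b_i\})=\{a_{i+1},b_{i+1}\}$. I expect this step to be the main obstacle. The intended argument runs as follows. The set $B$ is invariant under $f^p$ (restricted to $J^*$) and $f^p(I_i)\subseteq I_{i+1}$, so $f^p$ maps $a_i,b_i$ into $I_{i+1}\cap B=\{a_{i+1},b_{i+1}\}$. For surjectivity onto the endpoints, use that $B$ is perfect together with the transitivity characterisation of $B$: every neighbourhood of $a_{i+1}$ meets $B$ outside $I_{i+1}$ on one side. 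The image of a neighbourhood of $\{a_i,b_i\}$ must accumulate there. Combined with monotonicity of $\phi_k$ and continuity of $f^p$, this should force both endpoints of $I_{i+1}$ to be hit, unless $f^p(I_i)$ collapses, which is excluded because $\bigcup_k f^k(U)$ must be dense for $U$ around an endpoint.

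Granted this, $f^p$ permutes the $2q$ endpoints $\{a_i,b_i\}_{i}$ compatibly with the cyclic order of the fibres. The return map $f^{pq}$ on $\{a_0,b_0\}$ is then either the identity or the swap. If it is the swap, $a_0$ is $2q$-periodic and its orbit is the unique periodic orbit in $B$ meeting $I_0$, since the interior of $I_0$ misses $B$. If it is the identity, $a_0$ and $b_0$ are $q$-periodic, and they give one or two $q$-periodic orbits, depending on whether $b_0$ lies in the orbit of $a_0$. The period is exactly $q$, or $2q$, by the same divisibility argument as in the first case. Since $I_0\cap B=\{a_0,b_0\}$, no other orbits of $\left.f^p\right|_{f^kJ\cap B}$ can meet $I_0$. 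This yields the stated alternatives.
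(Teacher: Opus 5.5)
Your first case (some fibre is a singleton) is correct. The second case has a genuine gap: it rests on the claim $f^p(\{a_i,b_i\})=\{a_{i+1},b_{i+1}\}$, which is false in general. The paper itself records, citing Blokh, that endpoints of gaps are mapped \emph{to}, not necessarily \emph{onto}, endpoints of gaps. Your density argument only shows that the images of one-sided neighbourhoods of $a_i$ and $b_i$ accumulate on $I_{i+1}$ from \emph{some} side. If $F_k^i x$ is a turning point of $F_k$, both sides of $I_i$ are sent to the same side of $I_{i+1}$, and continuity then forces $f^p(a_i)=f^p(b_i)$.

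Here is a concrete example. Take the mixing piecewise linear map $F$ with nodes $0,1/4,1/2,3/4,1$ and values $0,1,0,3/4,0$. It has the interior fixed local maximum $t=3/4$. Blow up the backward orbit of $t$ in Denjoy fashion and put $f\equiv a$ on the gap $[a,b]$ inserted at $t$. Then $f(a)=f(b)=a$, both $a$ and $b$ lie in $B$, and $b$ is strictly preperiodic. So the return map $f^{pq}$ on $\{a_0,b_0\}$ can be constant, a case your identity/swap dichotomy misses. A related slip: a fibre containing an endpoint of $f^kJ$ need not have both endpoints in $B$.

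The repair is short. You only need the inclusion $f^p(I_i\cap B)\subseteq I_{i+1}\cap B$, which follows from $f(B)=B$ and the semiconjugacy.

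\begin{itemize}
\item Every endpoint of a nondegenerate fibre lies in $B$ or is an endpoint of $f^kJ$, and not both endpoints can be endpoints of $f^kJ$. Since the fibre's interior misses $B$, the set $I_0\cap B$ consists of one or two endpoints.
\item Hence $f^{pq}$ restricts to a self-map of $I_0\cap B$, which is the identity, the swap, or constant.
\item Your divisibility argument gives $q\mid m$ for every $f^p$-period $m$ of a point in $I_0$, so every such period is $q$ or $2q$.
\item The swap gives a unique $2q$-periodic orbit, and the identity gives one or two $q$-periodic orbits.
\item The constant case gives exactly one $q$-periodic orbit, since the other endpoint is preperiodic.
\end{itemize}

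With ``$=$'' replaced by ``$\subseteq$'' and this third case added, your argument establishes the Fact.
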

We will also require the following observation; see \cite[Step B8 (p.~40)]{Blokh1995}.
\begin{fact}\label{fact: all collapsing maps are factor maps}
    Any continuous monotone surjection $\phi_k\: f^kJ\to \J$ (with $\J$ some compact interval) which satisfies \eqref{eq: only endpoints of gaps are identified} gives rise to a continuous transitive endomorphism $F_k=\phi_k\circ \left.f^p\right|_{f^kJ}\circ \phi_k^{-1}$ on $\J$.
\end{fact}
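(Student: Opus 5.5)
The idea is to reduce the statement to the existence of Blokh's semiconjugacy $\phi_k$ recalled above. I would show that any $\phi\colon f^kJ\to\J$ satisfying the hypotheses differs from Blokh's $\phi_k$ only by a homeomorphism of intervals. The map $F=\phi\circ \left.f^p\right|_{f^kJ}\circ\phi^{-1}$ is then topologically conjugate to Blokh's interval model $F_k$. In particular, $F$ is well defined, continuous and transitive.

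\emph{Step 1: the fibres of $\phi$ are determined by $B$ alone.} Since $\phi$ is continuous, monotone and surjective, each fibre $\phi^{-1}(t)$ is a closed, possibly degenerate, subinterval of $f^kJ$. Let $I=\phi^{-1}(t)$ be a non-degenerate fibre. Then $\phi I$ is a singleton, so $\mathring I\cap B=\emptyset$ by \eqref{eq: only endpoints of gaps are identified}. Now let $I'\supseteq I$ be any interval with $\mathring{I'}\cap B=\emptyset$. The same property gives that $\phi I'$ is a singleton, so $I'\subseteq \phi^{-1}(t)=I$. Hence non-degenerate fibres are exactly the maximal intervals whose interior misses $B$, that is, the gaps of $B$; all remaining fibres are singletons. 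This description makes no reference to $\phi$. It therefore applies verbatim to Blokh's $\phi_k$, and we conclude that $\phi$ and $\phi_k$ have the same fibres.

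\emph{Step 2: the two collapsing maps differ by a homeomorphism.} Define $h\colon \J_k\to\J$ by $h(\phi_k(x))=\phi(x)$, where $\J_k$ is the target interval of $\phi_k$. By Step 1, $h$ is well defined and injective, and it is surjective because $\phi$ is. It is also monotone. To see this, take $s<s'$ in $\J_k$ and choose $x<x'$ with $\phi_k(x)=s$ and $\phi_k(x')=s'$; this is possible because the fibres are ordered intervals. Then $\phi(x)\le\phi(x')$, and the inequality is strict by injectivity of $h$. A monotone bijection between compact intervals is a homeomorphism, so $\phi=h\circ\phi_k$.

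\emph{Step 3: conclusion.} The set $\phi_k\circ\left.f^p\right|_{f^kJ}\circ\phi_k^{-1}$ is single-valued and equals $F_k$, because $\phi_k$ semiconjugates $\left.f^p\right|_{f^kJ}$ to $F_k$. Consequently
\[
\phi\circ \left.f^p\right|_{f^kJ}\circ\phi^{-1}=h\circ\phi_k\circ\left.f^p\right|_{f^kJ}\circ\phi_k^{-1}\circ h^{-1}=h\circ F_k\circ h^{-1}.
\]
This is a well-defined, continuous map, conjugate to the transitive map $F_k$, and hence transitive itself.

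The main point requiring care is Step 1. One must check that a gap, meaning a maximal interval $I$ with $\mathring I\cap B=\emptyset$, coincides with an entire fibre. In other words, the "if and only if" in \eqref{eq: only endpoints of gaps are identified} has to be used in both directions, so that $\phi$ collapses neither more nor less than $\phi_k$. Once this is done, everything else is formal.
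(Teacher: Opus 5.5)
Your argument is correct, but it takes a different route from the one the paper indicates. The paper gives no argument of its own for this Fact and refers to Blokh's Step B8. That is a direct verification, from the dynamics of $f^p$ on $B$ and its gaps, that collapsing the gaps is compatible with $f^p$, so that the induced map is single-valued and, being induced through a quotient map, continuous. It also shows that the quotient map is transitive because of the defining property of $B_f(J^*)$.

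You instead take as given the existence of one such semiconjugacy, namely Blokh's $\phi_k$ together with \eqref{eq: only endpoints of gaps are identified}, and prove a uniqueness statement. Property \eqref{eq: only endpoints of gaps are identified} alone pins down the fibre partition. Hence $\phi=h\circ\phi_k$ for a homeomorphism $h$, and $F=h\circ F_k\circ h^{-1}$. Step 1 can be phrased more quickly: for $x<y$, monotonicity gives $\phi(x)=\phi(y)$ iff $\phi[x,y]$ is a singleton iff $(x,y)\cap B=\emptyset$, which refers only to $B$.

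What your route buys:
\begin{itemize}
\item It is short and purely formal.
\item It makes explicit that the interval model does not depend on the choice of collapsing map, up to conjugacy. This is exactly the freedom the paper exploits when it later uses one and the same collapsing map for $f$ and for $g$.
\end{itemize}
What it costs:
\begin{itemize}
\item It is parasitic on the existence result, and in particular on Blokh's map satisfying \eqref{eq: only endpoints of gaps are identified} for this very $B$.
\item It gives no insight into why collapsing gaps commutes with $f^p$. The direct argument would establish existence and this Fact at the same time.
\end{itemize}

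One small point in Step 2: if ``monotone'' is allowed to mean non-increasing, your $h$ may be decreasing. It is still a homeomorphism. Alternatively, continuity of $h$ follows at once because $\phi_k$ is a closed surjection, hence a quotient map.
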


Finally, for each basic set $B$, $\left.f\right|_{B}$ is transitive \cite[Theorem~4.1]{Blokh1995} and $\overline{\Per(f)\cap B}=B$ \cite[Theorem~10.3]{Blokh1995}, which together imply
\begin{prop}\label{prop: dense periodic orbits in B}
    Suppose $B$ is a basic set of $f$.
    For each $n\in\N$, there is a periodic point $x_n$ whose orbit is $1/n$-dense in $B$.
\end{prop}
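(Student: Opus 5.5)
The plan is to combine the two cited facts: $\left.f\right|_B$ is transitive \cite[Theorem~4.1]{Blokh1995}, and $\Per(f)\cap B$ is dense in $B$ \cite[Theorem~10.3]{Blokh1995}. The argument has two steps.

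\emph{Step 1: a dense orbit.} Since $B$ is a compact metric space, $\left.f\right|_B$ is continuous, and $B$ has no isolated points, transitivity gives a point $z\in B$ whose forward orbit is dense in $B$. This is the standard Baire argument: the set of points with dense orbit is a dense $G_\delta$. It also follows from $B$ being a maximal $\omega$-limit set of the form $\omega(z')$ whose orbit accumulates on all of $B$.

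\emph{Step 2: approximate $z$ by a periodic point.} Fix $n\in\N$ and cover $B$ by finitely many open balls $V_1,\dots,V_m$ of radius $1/(2n)$ centred at points of $B$. Density of the orbit of $z$ gives times $k_1,\dots,k_m$ with $f^{k_j}(z)\in V_j$. Let $K=\max_j k_j$.

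\emph{Step 3: transfer to a periodic point.} By continuity of $f,\dots,f^K$ at $z$, there is $\delta>0$ such that every $y$ with $|y-z|<\delta$ satisfies $|f^{k_j}(y)-f^{k_j}(z)|<1/(2n)$ for all $j$. Density of $\Per(f)\cap B$ in $B$ yields a periodic point $x_n\in B$ with $|x_n-z|<\delta$. Then each $V_j$ lies within $1/n$ of a point of $\mc O(x_n)$, so $\mc O(x_n)$ is $1/n$-dense in $B$. The orbit also stays inside $B$, because $B$ is invariant.

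Step 3 is the only delicate point, and it is resolved by working with the finitely many times $k_1,\dots,k_m$ rather than the whole orbit: uniform continuity of finitely many iterates transfers density from the orbit of $z$ to the periodic orbit.
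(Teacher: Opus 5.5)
Your argument is correct and is the same route as the paper, which states the proposition as an immediate consequence of transitivity of $\left.f\right|_B$ and density of $\Per(f)\cap B$ in $B$; your dense-orbit-plus-continuity-of-finitely-many-iterates argument is the standard way to make that deduction explicit. There is one arithmetic slip. With balls of radius $1/(2n)$ and tolerance $1/(2n)$, a point of $V_j$ can be almost $1/n$ from $f^{k_j}(z)$, which is within $1/(2n)$ of $f^{k_j}(x_n)$, so you only get $3/(2n)$-density; use $1/(3n)$ for both quantities instead. Also, your aside via $\omega(z')$ does not by itself give a point of $B$ with dense orbit, since $z'$ need not lie in $B$. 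Your Baire argument does give such a point, so nothing depends on the aside.
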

\subsubsection{Identifying basic sets and their dynamics}
The properties of basic sets mentioned above readily imply the first part of 
Theorem~\ref{introthm: basic sets}: if $f$ and $g$ are two endomorphisms on $\I$ 
with $\PER(f)=\PER(g)$, then the basic sets of $f$ and $g$ coincide.
Indeed, given a basic set $B$ of $f$ and any two periodic points $y,z \in B$ with $\mc O_f(y)\neq \mc O_f(z)$, Proposition~\ref{prop: dense periodic orbits in B} ensures that there exists a sequence of $f$-periodic points $(x_n)\ssq B$ whose orbits accumulate on both $y$ and $z$. 
Since $\mc O_f(x_n)=\mc O_g(x_n)$ and because periodic orbits are themselves $\omega$-limit sets, 
the compactness of $\omega_g=\{\omega_g(x)\: x\in \I\}$ implies that $y$ and $z$ 
lie in a common $\omega$-limit set of $g$, and consequently belong to the same basic 
set of $g$ (see Remark~\ref{rem: only basic sets have several periodic orbits}). 
Since $y$ and $z$ were arbitrary and since $\Per(f)\cap B$ is dense in $B$, it follows that $B$ is contained in a basic set of $g$. 
By symmetry, we conclude that any basic set of $f$ is a basic set of $g$, and 
vice versa.

With the above, Theorem~\ref{introthm: basic sets} becomes an immediate consequence of the next statement, where we write $\PER_B(f)$ for the collection of $f$-periodic orbits 
contained in $B$.
\begin{thm}\label{thm: main theorem on basic sets}
Suppose $f$ and $g$ are endomorphisms on $\I$ 
which have a basic set $B$ in common.
Specifically, let $J$ be a $p$-periodic interval of $f$ and $J'$ be a $p'$-periodic interval of $g$ such that 
\[
B=B_f\Big(\bigsqcup_{k=0}^{p-1}f^k J\Big)=B_g\Big(\bigsqcup_{k=0}^{p'-1}g^k J'\Big).
\]
Suppose that $\PER_B(f)=\PER_B(g)$. 
Then for $q=2\min\{p,p'\}$, we have that
$\left.f^q\right|_B=\left.g^q\right|_B$.
\end{thm}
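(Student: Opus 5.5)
We reduce to the dense-periodic-orbit setting of Section~\ref{sec: applications dense orbits} by passing to the interval models. Assume without loss of generality $p\le p'$.

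\emph{Step 1: the two periodic-interval structures agree.} Each component $f^kJ$ of $J^*$ is the convex hull of $f^kJ\cap B$: points of $J^*$ outside that hull lie in gaps, and a gap touching an endpoint of $f^kJ$ could be trimmed away. The same holds for the components $g^{k'}J'$. By Proposition~\ref{prop: dense periodic orbits in B}, the periodic orbits in $\PER_B(f)=\PER_B(g)$ accumulate densely on $B$. Each such orbit meets the components in a fixed cyclic order, advancing one component per step under $f$ and one per step under $g$. Hence the partition of $B$ induced by the $f$-components and the one induced by the $g$-components are each invariant under both maps. Comparing an orbit that is dense in all of $B$ then shows $p\mid p'$ or $p'\mid p$. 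The coarser partition, with period $p=\min\{p,p'\}$, has components that are $f^p$- and $g^p$-invariant. So the convex hulls $I_k$ of $f^kJ\cap B$, for $k<p$, are invariant under both $f^p$ and $g^p$ restricted to $B$.

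\emph{Step 2: transfer to a common model.} Fix $k$ and let $\phi=\phi_k\colon I_k\to\J$ be the monotone collapse of the gaps of $B$ inside $I_k$, as in \eqref{eq: only endpoints of gaps are identified}. This $\phi$ depends only on $B$, not on which map we use. By Fact~\ref{fact: all collapsing maps are factor maps}, it semiconjugates both $f^p|_{I_k}$ and $g^p|_{I_k}$ to transitive maps $F,G\colon\J\to\J$. For $g$ this needs care, since $I_k$ may be a union of several $g$-periodic components: one either uses that $g^p$ still maps $I_k$ to itself, with $\phi$ collapsing only gaps, or argues directly from $g^p(B\cap I_k)=B\cap I_k$ and continuity of $\phi$ on $B$. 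Periodic orbits of $f^p$ on $B\cap I_k$ are the traces of the common periodic orbits in $\PER_B(f)$ on $I_k$, so they coincide with those of $g^p$. Their $\phi$-images are periodic orbits of $F$ and of $G$, and they give the same sets. Moreover, every periodic orbit of $F$ is the image of such an orbit, by Fact~\ref{fact: periodic orbits of basic sets}. Hence $\PER(F)=\PER(G)$, and $F$, $G$ are transitive with dense periodic points.

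\emph{Step 3: conclude and lift.} By Corollary~\ref{cor: dense periodic orbits determine second iterate} (applied with $\PER(F)=\PER(G)$, which trivially gives the stable inclusions), $F^2=G^2$. Lifting back, $\phi\circ f^{2p}=\phi\circ g^{2p}$ on $B\cap I_k$. On points $x\in B$ whose image $\phi(f^{2p}x)$ has a singleton fibre, this gives $f^{2p}x=g^{2p}x$. The remaining points of $B$ are gap endpoints. Since $B$ is perfect, such a point is a limit of points of $B$ not in gap endpoints, because the gap endpoints are countable and $B$ is a perfect compact set. By continuity, $f^{2p}=g^{2p}$ on all of $B$.

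We expect the main obstacle to be the gap endpoints when lifting. $\phi(f^{2p}x)=\phi(g^{2p}x)$ alone allows $f^{2p}x$ and $g^{2p}x$ to be the two different endpoints of one gap. We would resolve this with the perfectness and continuity argument above. A secondary difficulty is the component matching in Step 1, where $p\neq p'$ must be handled.
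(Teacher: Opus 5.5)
Your treatment of the case $p=p'$ is essentially the paper's, but the case $p\neq p'$ rests on a false claim. Step 1 asserts that the pieces $B\cap I_k$ of the coarser partition are invariant under $g^{p}$ (indeed, that both partitions are invariant under both maps). Step 2 needs exactly this to define $G$ on the same $\J$, and it fails.

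Here is a counterexample. Take the full tent map $\Gamma$ and a Denjoy-type Cantor blow-up $\tilde\Gamma\colon C\to C$ of it, with $0\in C$ not blown up and $\tilde\Gamma(0)=0$. Place four copies $P_0<P_1<P_2<P_3$ of $C$ from left to right, with $P_0,P_2$ reflected. Then the copy $(0,i)$ of $0$ is the right endpoint of $P_0,P_2$ and the left endpoint of $P_1,P_3$. Define
\[
g(y,i)=(y,i+1)\ (i<3),\qquad g(y,3)=(\tilde\Gamma y,0),
\]
\[
f(y,0)=(y,2),\quad f(y,2)=(y,1),\quad f(y,1)=(y,3),\quad f(y,3)=(\tilde\Gamma y,0),
\]
and extend both maps affinely over gaps. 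Then:
\begin{itemize}
\item $\operatorname{conv}P_0$ is a $4$-periodic interval of $g$.
\item $\operatorname{conv}(P_0\cup P_1)$ is a $2$-periodic interval of $f$. Since $f^2$ swaps the endpoints $(0,0),(0,1)$ of the gap between $P_0$ and $P_1$, the collapsed model of $f^2$ there is continuous, transitive and non-mixing.
\item Both maps have the periodic orbits $\{(\tilde\Gamma^n y,i)\}$, so $\PER_B(f)=\PER_B(g)$.
\end{itemize}
But $g^2(P_0\cup P_1)=P_2\cup P_3$, and $g(P_0\cup P_1)=P_1\cup P_2$ is not even a coarse piece. So with $p=2<p'=4$ there is no $G$ with $\phi\circ g^p=G\circ\phi$ on $B\cap I_0$. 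Note also that here $f^4=g^4$ on $B$ but $f^2\neq g^2$. Separately, ``comparing a dense orbit'' does not establish $p\mid p'$ or $p'\mid p$. What is needed, up to symmetry, is $p'\in\{p,2p\}$. The paper proves this via the gcd of the periods in $B$ and the fact that every interval model has a fixed point (Fact~\ref{fact: periodic orbits of basic sets}).

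For $p'=2p$, the missing idea is to work on the finer pieces $B\cap g^{k'}J'$ rather than on $B\cap I_k$.
\begin{itemize}
\item The model $F$ of $f^p$ on $I_k$ has a fixed point, but $f^p$ has none in $B$, since all periods there are multiples of $2p$. So the fibre of that fixed point is a gap whose endpoints form an $f$-orbit of period $2p$.
\item As a $g$-orbit, this meets each $g$-piece exactly once, so it is the gap separating the two $g$-pieces inside $I_k$.
\item Every $F$-orbit is balanced about this point, so $F$ is transitive but not mixing. By Theorem~\ref{thm: transitive vs mixing maps}, $F^2$ is mixing on each half, and each half is a model of $f^{2p}$ on a $g$-piece.
\item That $g$-piece is also preserved by $g^{p'}$. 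Restricting $\phi$ gives a model of $g^{p'}$ on the same half with the same periodic orbits.
\item The mixing rigidity, Corollary~\ref{cor: mixing maps are fully determined by periodic orbits}, then yields $f^{2p}=g^{2p}$ there.
\end{itemize}
Using Corollary~\ref{cor: dense periodic orbits determine second iterate} at this point would only give $f^{4p}=g^{4p}$, which is too weak for $q=2\min\{p,p'\}$.

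A smaller issue in Step 3: the exceptional set is $\{x\in B\colon f^{2p}x\text{ is a gap endpoint}\}$, not the set of gap endpoints, and it need not be countable. Argue instead that $\{x\in B\colon f^{2p}x\neq g^{2p}x\}$ is relatively open in $B$. On it, $F^2\circ\phi$ takes values in the countable set of collapsed gaps. If it were nonempty, $F^2$ would be constant on a nondegenerate interval, contradicting transitivity.
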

\begin{rem}
Note that $B$ is either a disjoint union of intervals or a Cantor set \cite[Corollary~4.4]{Blokh1995}.
If $B$ is a disjoint union of intervals, the proof of the above statement is straightforward. 
Indeed, in this setting, one has $B\cap f^kJ=f^kJ$ and $B\cap g^kJ'=g^kJ'$ for 
each $k$, which forces $B=\bigsqcup_{k=0}^{p-1}f^k J=\bigsqcup_{k=0}^{p'-1}g^k J'$ and, consequently, $p=p'$. 
Since $\overline{\Per(f)\cap B}=B$
ensures that $\overline{\Per(\left.f^p\right|_{f^kJ})\cap f^kJ}=f^kJ$ for each $k$,
Corollary~\ref{cor: dense periodic orbits determine second iterate} implies that
$f^{2p}$ and $g^{2p}$ coincide on $B$.

However, if $B$ is a Cantor set, the proof is more involved.
The first complication in this setting is that we can no longer guarantee that 
$\bigsqcup_{k=0}^{p-1}f^k J = \bigsqcup_{k=0}^{p'-1}g^k J'$, nor can we ensure that $p=p'$.
Furthermore, to apply the results of Section~\ref{sec: applications dense orbits} to 
establish that the restrictions of $f^q$ and $g^q$ to $B$ coincide (where $q=2\min\{p,p'\}$), we must utilise underlying interval models $F_k$ and $G_k$, respectively. 
The case $p > p'$ is somewhat more involved; here, we show that, in fact, $p = 2p'$ and that the family of interval models $(F_k)_{k=0}^{2p'}$ is obtained by appropriately restricting the second iterate of each interval model in $(G_k)_{k=0}^{p'}$ to two subintervals.
Observe that we do not explicitly distinguish between the two cases in the proof below.
\end{rem}
\begin{proof}
We begin by showing that either $p=p'$, $p=2p'$, or $2p=p'$. 
To this end, let $Q$ denote the greatest common divisor of the periods of 
all periodic orbits of $\left.f\right|_B$ (and hence of $\left.g\right|_B$). 
Since every $P$-periodic orbit ($P\in \N$) of $\left.f\right|_B$ intersects 
each interval $f^k J$ for $k \in \{0, \dots, p-1\}$ in exactly $P/p$ points, 
it follows that $p$ must divide $Q$. 
In fact, $Q\in\{p,2p\}$; indeed, assuming $Q=np$ for some $n>2$ would imply 
that all periodic orbits of $\left.f^p\right|_{B\cap f^kJ}$ have period at least $3$. 
By Fact~\ref{fact: periodic orbits of basic sets}, this would mean that any interval model $F_k$ lacks a fixed point---a contradiction, since every endomorphism on a compact interval $\J$ admits a fixed point.
By a symmetric argument, $Q\in\{p',2p'\}$, which proves the claim.

We will consider the cases $p=p'$ and $p=2p'$ (and, by symmetry, $2p=p'$) 
separately. 
To address the fact that in general---and even when $p=p'$---the 
interval orbits $\bigsqcup_{k=0}^{p-1}f^k J$ and $\bigsqcup_{k=0}^{p'-1}g^k J'$ 
do not coincide, we first introduce some terminology. 
We say that a collection of points $a_1,\dots,a_{p-1}\in \I\setminus B$ are 
\emph{$\left.f\right|_B$-separators} if, setting $a_0=\0$ and $a_p=\1$ (recall that $\I=[\0,\1]$), we have 
$a_0<a_1<\dots<a_{p-1}<a_p$ and there exists a permutation $\sigma$ on 
$\{0,\dots,p-1\}$ such that 
\[f^{\sigma(k)}J\cap B=[a_k,a_{k+1}]\cap B \qquad (k\in\{0,\dots,p-1\}).\] 
We define \emph{$\left.g\right|_B$-separators} analogously, with the 
corresponding permutation on $\{0,\dots,p'-1\}$ denoted by $\tau$. 
The existence of such separators is straightforward.

First, consider the case where $p=p'$. 
We begin by showing that, in this setting, any collection of 
$\left.f\right|_B$-separators is also a collection of 
$\left.g\right|_B$-separators, and conversely.
Indeed, suppose for a contradiction that $a_1,\dots,a_{p-1}$ satisfied $f^{\sigma(k)}J\cap B=[a_k,a_{k+1}]\cap B$ but
$g^{\tau(k)}J'\cap B\neq[a_k,a_{k+1}]\cap B$ and let $k$ be minimal with this property.
Without loss of generality, we may assume that $g^{\tau(k)}J'\cap B\subsetneq[a_k,a_{k+1}]\cap B$.
By denseness of $g$-periodic points in $B$ (see Proposition~\ref{prop: dense periodic orbits in B}) and hence, denseness of $g$-periodic points in $[a_k,a_{k+1}]\cap B\setminus g^{\tau(k)}J'$, this would imply the existence of a $g$-periodic orbit whose intersection with $g^{\tau(k)}J'$ was of smaller cardinality than its intersection with $f^{\sigma(k)}J$---a contradiction.
This shows equality of $\left.f\right|_B$- and 
$\left.g\right|_B$-separators and, consequently,
$f^{\sigma(k)}J\cap B=g^{\tau(k)}J'\cap B$ for $k\in \{0,\dots,p-1\}$.

To finish the proof for $p=p'$, we now show that $\left.f^{2p}\right|_{f^kJ\cap B}=\left.g^{2p}\right|_{f^kJ\cap B}$ for all $k\in\{0,\dots,p-1\}$. 
To that end, let $\phi_k\: f^kJ\to \I$ be a continuous monotone surjection satisfying \eqref{eq: only endpoints of gaps are identified}.
Then $\phi_k$ acts as a semiconjugacy between $\left.f^p\right|_{f^kJ\cap B}$ and a transitive endomorphism $F_k$ on $\I$ and likewise, as a semiconjugacy between $\left.g^p\right|_{f^kJ\cap B}$ and a transitive endomorphism $G_k$ on $\I$, see Fact~\ref{fact: all collapsing maps are factor maps}.
As $\PER_B(f)=\PER_B(g)$, it follows from Fact~\ref{fact: periodic orbits of basic sets} that $\PER(F_k)=\PER(G_k)$.
Consequently, Corollary~\ref{cor: dense periodic orbits determine second iterate} implies that $F_k^2=G_k^2$---note that $\overline{\Per(F_k)}=\I$ since $\overline{\Per(f)\cap B}=B$.
As $\phi_k$ is $1$-to-$1$ onto a (dense) subset $A_k\ssq \I$ with $\overline{\phi_k^{-1}[(F_k^2)^{-1}A_k]}=f^kJ\cap B$---and as $\phi_k$ is, trivially, also a semiconjugacy between $\left.f^{2p}\right|_{f^kJ\cap B}$ and $F_k^2$ and likewise, between $\left.g^{2p}\right|_{f^kJ\cap B}$ and $G_k^2$---we obtain 
$\left.f^{2p}\right|_{\phi_k^{-1}[(F_k^2)^{-1}A_k]}=\left.g^{2p}\right|_{\phi_k^{-1}[(F_k^2)^{-1}A_k]}$ and the desired equality.

It remains to consider $p=2p'$ (the case $2p=p'$ is similar).
In this case, if $a_1,\dots,a_{p-1}$ are $\left.f\right|_B$-separators, then 
$a_2,a_4,\dots,a_{p-2}$ are $\left.g\right|_B$-separators; this follows similar to how we argued that $\left.f\right|_B$-separators are $\left.g\right|_B$-separators in the previous case.
Consequently, any $g^{p'}$-periodic point in $g^{\tau(k)}J' \cap B$ ($k \in \{0, \dots, p'-1\}$) is of even period under $g^{p'}$; in fact, each such orbit intersects $(a_{2k}, a_{2k+1}) \cap B$ as often as it intersects $(a_{2k+1}, a_{2(k+1)}) \cap B$.

Now, let $\Phi_{\tau(k)}\:g^{\tau(k)}J'\to [0,1]$ ($k\in\{0,\dots,p'-1\}$) be a continuous monotone semiconjugacy between $\left.g^{p'}\right|_{g^{\tau(k)}J'}$ and some transitive interval model $G_{\tau(k)}$ as described before Fact~\ref{fact: periodic orbits of basic sets}.
By Fact~\ref{fact: periodic orbits of basic sets} and since all $\left.g^{p'}\right|_{g^{\tau(k)}J'\cap B}$-periodic orbits are of even period, $G_{\tau(k)}$ admits a fixed point if and only if $\Phi_{\tau(k)}$ identifies the elements of a $2$-periodic orbit of $\left.g^{p'}\right|_{g^{\tau(k)}J'\cap B}$. 
Crucially, the endpoints of the gap containing $a_{2k+1}$ constitute the unique 
$2$-periodic orbit available for this identification. 
Indeed, since every $2$-periodic orbit intersects both 
$(a_{2k},a_{2k+1})\cap B$ and $(a_{2k+1},a_{2(k+1)})\cap B$, any other
such orbit must have points on either side of this gap, ruling out 
the possibility that it consists merely of the endpoints of a single gap.
Let $c^{\tau(k)}$ denote the unique fixed point of $G_{\tau(k)}$ and observe that $c^{\tau(k)}\in (0,1)$.

From \cite[Step~5, p.~40]{Blokh1995}, we know that
$g^{p'}$ maps endpoints of a gap of $B\cap g^kJ'$ to (not necessarily onto) the endpoints of some gap or to the left-most or right-most point in $B\cap g^kJ'$.
As a consequence, if a $g^{p'}$-periodic orbit in $B\cap g^kJ'$ contains both endpoints of some gap in $B\cap g^kJ'$, then any of its elements is an endpoint of a gap and moreover, the other endpoint of that gap also is an element of that orbit.
In fact, along such orbit, we have that $g^{p'}$ maps gaps onto gaps.
As a consequence of \eqref{eq: only endpoints of gaps are identified}, we obtain that
every $G_k$-periodic orbit in $[0,1]$ has as many elements to the left of $c^k$ as it has to the right of $c^k$; that is, in particular, $\Per_r(G_k)$ is non-empty only for $r=1$ or $r$ even.
Consequently, while $G_k$ is transitive, it is not mixing, since its set of periodic points of prime period is not dense; see the proof of Corollary~\ref{cor: mixing maps are fully determined by periodic orbits}.

Now, Theorem~\ref{thm: transitive vs mixing maps} gives that $[0,c^k]$ and $[c^k,1]$ are invariant under $G_k^2$ and that $G_k^2$ is mixing on both these intervals.
For $k=0,\dots,p'-1$, we define 
\[
\phi_{\tau(k);0}\: (a_{2k},a_{2k+1})\cap B \to [0,c^{\tau(k)}], \qquad x\mapsto \Phi_{\tau(k)}(x)\] 
and 
\[\phi_{\tau(k);1}\: (a_{2k+1},a_{2(k+1)})\cap B \to [c^{\tau(k)},1], \qquad x\mapsto \Phi_{\tau(k)}(x).\]
With Fact~\ref{fact: all collapsing maps are factor maps}, we see that $\phi_{\tau(k),i}$ serves as a semiconjugacy between $\left.f^p\right|_{f^{\sigma(2k+i)}J\cap B}$ and some transitive endomorphism $F_{\sigma(2k+i)}$ on $[0,c^{\tau(k)}]$ (for $i=0$) and $[c^{\tau(k)},1]$ (for $i=1$), respectively.
From Fact~\ref{fact: periodic orbits of basic sets}, the definition of $\phi_{\tau(k),i}$, and since $\PER_B(f)=\PER_B(g)$, we obtain that $F_{\sigma(2k+i)}$ has the same periodic orbits as $G_{\tau(k)}^2$ restricted to $[0,c^{\tau(k)}]$ or $[c^{\tau(k)},1]$, respectively.
By Corollary~\ref{cor: mixing maps are fully determined by periodic orbits}, this implies
$F_{\sigma(2k+i)}=G_{\tau(k)}^2$ on the respective interval.
Similarly as in the case $p=p'$, this equality lifts to $\left.f^p\right|_{f^{2k+i}J\cap B}$ and $\left.g^p\right|_{f^{2k+i}J\cap B}$ for $k=0,\ldots,p'-1$ and $i=0,1$.
This finishes the proof.
\end{proof}

We close this section by remarking that, since the topological entropy 
$h_{\text{top}}(f)$ of an endomorphism $f$ on a compact interval $\I$ is given by the supremum 
of the topological entropies over its basic sets, and recalling that 
$h_{\text{top}}(f^q)=q \cdot h_{\text{top}}(f)$, we obtain 
\begin{cor}
Suppose $f$ and $g$ are endomorphisms on a compact interval $\I$ with $\PER(f)=\PER(g)$.
Then $h_{\textrm{top}}(f)=h_{\textrm{top}}(g)$.
\end{cor}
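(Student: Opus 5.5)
The plan is to deduce the equality of entropies from Theorem~\ref{introthm: basic sets} together with Blokh's description of entropy via basic sets. If $h_{\textrm{top}}(f)=0$ and $f$ has no basic sets, we need to rule out that $g$ has positive entropy. The first part of Theorem~\ref{introthm: basic sets} handles this: since $\PER(f)=\PER(g)$, the basic sets of $f$ and $g$ coincide. Hence $f$ has a basic set if and only if $g$ has one. Because basic sets are exactly the maximal $\omega$-limit sets carrying positive entropy, and entropy is localised on them (Theorem~\ref{thm: spectral decomposition} and \cite{Blokh1995}), both entropies vanish when there are no basic sets.

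Next I will use the formula $h_{\textrm{top}}(f)=\sup_B h_{\textrm{top}}(\left.f\right|_B)$, taken over all basic sets $B$ of $f$, and likewise for $g$. Both suprema range over the same family of sets. So it suffices to show that $h_{\textrm{top}}(\left.f\right|_B)=h_{\textrm{top}}(\left.g\right|_B)$ for each common basic set $B$.

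Fix such a $B$. By Theorem~\ref{introthm: basic sets}, more precisely Theorem~\ref{thm: main theorem on basic sets} applied with $\PER_B(f)=\PER_B(g)$ (which follows from $\PER(f)=\PER(g)$), there is $q\in\N$ with $\left.f^q\right|_B=\left.g^q\right|_B$. Since $B$ is a compact $f$-invariant and $g$-invariant set, the power rule for restrictions gives
\[
q\cdot h_{\textrm{top}}(\left.f\right|_B)=h_{\textrm{top}}(\left.f^q\right|_B)=h_{\textrm{top}}(\left.g^q\right|_B)=q\cdot h_{\textrm{top}}(\left.g\right|_B).
\]
Dividing by $q$ yields the claim for $B$, and taking suprema finishes the proof.

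The main point to check is the localisation formula: that $h_{\textrm{top}}$ of $f$ equals the supremum over basic sets of the restricted entropies, rather than merely being positive only if some basic set exists. I would take this from \cite{Blokh1995}, which shows that every ergodic measure of positive entropy is supported in some basic set. Combined with the variational principle, this gives the supremum formula, since each restriction $\left.f\right|_B$ is a compact subsystem. The invariance of $B$ under $g$ is needed for $h_{\textrm{top}}(\left.g\right|_B)$ to make sense; it holds because $B$ is a basic set of $g$ as well.
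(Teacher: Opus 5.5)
Your proposal is correct and follows the paper's own argument. The paper likewise combines three ingredients: entropy on a compact interval is the supremum of the entropies over basic sets; the basic sets coincide and satisfy $\left.f^q\right|_B=\left.g^q\right|_B$ (Theorem~\ref{introthm: basic sets}); and $h_{\textrm{top}}(f^q)=q\cdot h_{\textrm{top}}(f)$. Your justification of the supremum formula, via the variational principle and Blokh's result that positive-entropy ergodic measures live on basic sets, spells out a step the paper simply quotes.
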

\section{Examples}\label{sec: examples}
Distinct interval maps can, in general, possess the same periodic orbits even when their periodic points are dense. 
We present two such classes of examples on the unit interval $[0,1]$: in the first, the maps are conjugate, whereas in the second, they are not.
\begin{exmp}\label{exmp: transitive and conjugate maps with same spectrum}
 Let $f\:[0,1/2]\to[0,1/2]$ be mixing with $f(1/2)=1/2$.
 On $[0,1]$, we define
 \begin{align*}
  F_1(x)=
  \begin{cases}
   1-f(x) & \text{if } x\in [0,1/2],\\
   1-x & \text{if } x\in (1/2,1]
  \end{cases}
  \qquad \text{and} \qquad
  F_2(x)=
  \begin{cases}
   1-x & \text{if } x\in [0,1/2],\\
   f(1-x) & \text{if } x\in (1/2,1].
  \end{cases}
 \end{align*}

Note that $F_1$ and $F_2$
are conjugate via $h\:[0,1]\ni x\mapsto 1-x\in [0,1]$, that is, $h\circ F_1=F_2\circ h$.
A straightforward computation further yields
\begin{align}\label{eq: second iterates coincide}
F_1^{2n}(x)=F_2^{2n}(x)=
\begin{cases}
   f^n(x) & \text{if } x\in [0,1/2],\\
   1-f^n(1-x) & \text{if } x\in (1/2,1],
\end{cases}
\end{align}
and hence,
 \begin{align*}
  F_1^{2n+1}(x)=
  \begin{cases}
   1-f^{n+1}(x) & \text{if } x\in [0,1/2],\\
   f^n(1-x) & \text{if } x\in (1/2,1]
  \end{cases}
  \quad\ \text{and} \quad\
  F_2^{2n+1}(x)=
  \begin{cases}
   1-f^n(x) & \text{if } x\in [0,1/2],\\
   f^{n+1}(1-x) & \text{if } x\in (1/2,1],
  \end{cases}
 \end{align*}
 for all $n\geq 0$.
  It follows that $F_1$ and $F_2$ are transitive non-mixing maps; see also Theorem~\ref{thm: transitive vs mixing maps}.

By direct inspection (or Theorem~\ref{thm: transitive vs mixing maps}), we see that all periodic points of $F_1$ and $F_2$, with the exception of the fixed point $1/2$, have even period.  
From \eqref{eq: second iterates coincide}, we hence obtain that $F_1$ and $F_2$ have the same $p$-periodic points
for each $p=2n$.
In fact, $F_1$ and $F_2$ have the same periodic orbits:
a $2n$-periodic point $x\in[0,1/2)$ has an $F_1$-orbit
$\mc O_{F_1}(x)=\{x,1-f(x),f(x),1-f^2(x),f^2(x),\ldots,1-f^n(x)\}$
and an $F_2$-orbit
$\mc O_{F_2}(x)=\{x,1-x,f(x),1-f(x),f^2(x),\ldots,1-f^{n-1}(x)\}$.
As $x=F_1^{2n}x=f^n(x)$, we see that indeed, $\mc O_{F_1}(x)=\mc O_{F_2}(x)$.
\end{exmp}

In light of the previous example, it is natural to ask whether transitive interval maps with the same periodic orbits are necessarily conjugate. 
A subtler question is whether the situation of Example~\ref{exmp: transitive and conjugate maps with same spectrum} exhausts all possible pairs of transitive endomorphisms on $[0,1]$ with the same periodic orbits; that is, whether for any two such endomorphisms $f_1$ and $f_2$ there exists a self-homeomorphism $h$ of $[0,1]$ such that $h\circ f_1\circ h^{-1}$ and $h\circ f_2\circ h^{-1}$ take the same form as $F_1$ and $F_2$ in the above example.
We leave these questions open and conjecture that the answer to both is affirmative.

Building on Example~\ref{exmp: transitive and conjugate maps with same spectrum}, it is not difficult to obtain examples of pairs of non-conjugate interval maps with the same dense collection of periodic orbits.

\begin{exmp}\label{exmp: non-conjugate maps with same spectrum}
Let $g\:[0,1/4]\to [0,1/4]$ be mixing with $g(1/4)=1/4$;
let $f\:[1/4,1/2]\to[1/4,1/2]$ be mixing with $f(1/4)=1/4$ and $f(1/2)=1/2$.
For simplicity, we assume that for each period $p$, both $f$ and $g$ possess only finitely many $p$-periodic points.
 
On $[0,1]$, we define
 \begin{align*}
  G_1(x)=
  \begin{cases}
   1-g(x) & \text{if } x\in [0,1/4),\\
   1-f(x) & \text{if } x\in [1/4,1/2],\\
   1-x & \text{if } x\in (1/2,3/4],\\
   1-x & \text{if } x\in (3/4,1],
  \end{cases}
  \qquad \text{and} \qquad
  G_2(x)=
  \begin{cases}
   1-g(x) & \text{if } x\in [0,1/4),\\
   1-x & \text{if } x\in [1/4,1/2],\\
   f(1-x) & \text{if } x\in (1/2,3/4],\\
   1-x & \text{if } x\in (3/4,1].
  \end{cases}
 \end{align*}
Observe that the restrictions of $G_1$ and $G_2$ to $[1/4,3/4]$ are similar to $F_1$ and $F_2$ from the previous example, respectively (see also Figures \ref{fig: transitive and conjugate maps with same spectrum with reverse tent map} and \ref{fig: non-transitive and non-conjugate maps with same spectrum}).
As $\left.G_1\right|_{[0,1/4]\cup[3/4,1]}=
\left.G_2\right|_{[0,1/4]\cup[3/4,1]}$, we hence have that $G_1$ and $G_2$ have the same periodic orbits.

\begin{figure}[h]
    \centering

    \begin{tikzpicture}[scale=4.0, declare function={
        g(\x)=(1/4) * abs(8 * \x - 1);
        f(\x)=(1/4) * (abs(abs(12 * \x - 4)-1)+1); 
        S1(\x)= ifthenelse(\x < 1/4, 1 - g(\x), ifthenelse(\x < 1/2, 1 - f(\x), 1 - \x));
        S2(\x)= ifthenelse(\x < 1/4, 1 - g(\x), ifthenelse(\x < 1/2, 1 - \x, ifthenelse(\x <= 3/4, f(1 - \x), 1 - \x)));
    }]
        \begin{scope}[local bounding box=left]
            \draw[-] (0,0) -- (1.1,0);
            \draw[-] (0,0) -- (0,1.1);
            \draw[lightgray] (0,0) -- (1,1);
            \draw[dashed, lightgray] (0,0.5) -- (1,0.5);
            \draw[dashed, lightgray] (0,1) -- (1,1);
            \draw[dashed, lightgray] (1,0) -- (1,1);
            \draw[dashed, lightgray] (0.5,0) -- (0.5,1);
            \draw[dashed, lightgray] (0.25,0) -- (0.25,1);
            \draw[dashed, lightgray] (0.75,0) -- (0.75,1);
            \draw[dashed, lightgray] (0,0.25) -- (1,0.25);
            \draw[dashed, lightgray] (0,0.75) -- (1,0.75);
            \draw[thick] plot[domain=0:1, samples=500] (\x, {S1(\x)});
            \foreach \X in {0,1}
            {\draw (\X,0.05) -- (\X,-0.05) node[below]{$\X$};}
            \draw (0.05,1) -- (-0.05,1) node[left]{$1$};
            \node[above left] at (0.45,0.8) {$G_1$};
        \end{scope}
    
        \begin{scope}[xshift=2.0cm, local bounding box=right]
            \draw[-] (0,0) -- (1.1,0);
            \draw[-] (0,0) -- (0,1.1);
            \draw[lightgray] (0,0) -- (1,1);
            \draw[dashed, lightgray] (0,0.5) -- (1,0.5);
            \draw[dashed, lightgray] (0,1) -- (1,1);
            \draw[dashed, lightgray] (1,0) -- (1,1);
            \draw[dashed, lightgray] (0.5,0) -- (0.5,1);
            \draw[dashed, lightgray] (0.25,0) -- (0.25,1);
            \draw[dashed, lightgray] (0.75,0) -- (0.75,1);
            \draw[dashed, lightgray] (0,0.25) -- (1,0.25);
            \draw[dashed, lightgray] (0,0.75) -- (1,0.75);
            \draw[thick] plot[domain=0:1, samples=500] (\x, {S2(\x)});
            \foreach \X in {0,1}
            {\draw (\X,0.05) -- (\X,-0.05) node[below]{$\X$};}
            \draw (0.05,1) -- (-0.05,1) node[left]{$1$};
            \node[above left] at (0.45,0.8) {$G_2$};
        \end{scope}
    \end{tikzpicture}
    \caption{
        Two endomorphisms $G_1$ and $G_2$ on the unit interval having dense periodic points and the same period spectrum without being conjugate (here, $g(x) = 1/4\cdot |8x - 1|$ on $[0, 1/4]$ and $f(x)=1/4 \cdot \big(\big||12x - 4|-1\big|+1\big)$ on $[1/4,1/2]$).
    }
    \label{fig: non-transitive and non-conjugate maps with same spectrum}
\end{figure}

However, $G_1$ and $G_2$ are not conjugate.
To see this, assume for a contradiction that $h\:[0,1]\to[0,1]$ is a conjugacy between the two systems.
Note that $h$ has to map $p$-periodic points to $p$-periodic points.
If $h$ is order-preserving,
from the assumption of only finitely many periodic points of a given period and as $\Per(G_1)=\Per(G_2)$, we obtain that $h$
is necessarily the identity on all periodic points and hence, by denseness, the identity on $[0,1]$; contradicting $G_1\neq G_2$.

If $h$ is order-reversing, however, note that $h$ maps $J=[1/4,3/4]$ onto itself and hence, $h[0,1/4]=[3/4,1]$---this follows since $J$ is the unique proper invariant subinterval of $G_1$ as well as $G_2$.
Consequently, considering $h\circ G_1=G_2\circ h$ on $[0,1/4]$, we obtain
$h\circ (1-g)=1-h$ which is impossible as the right-hand side is a bijection while the left-hand side is not.
\end{exmp}

\section{Proof of Lemma~\ref{lem: local type of fixed points}}
\label{sec: proof of technical lemma}
We use the notation and terminology from Section~\ref{sec: periodic points on the interval determine left-right behaviour}.
\begin{proof}[Proof of Lemma~\ref{lem: local type of fixed points}]
\mbox{}
\begin{enumerate}[label=(\roman*),leftmargin=*]
\item Let $\llbracket a,b\rrbracket$ be the connected component of $\Fix_p(f)$ containing $x$. 
We prove the statement by showing that $\llbracket a,b\rrbracket$ is also a connected component of $\Fix_{ip}(f)$, and that the types of $a$ and $b$ (if applicable, i.e., if $a>\0$ and $b<\1$, respectively) are independent of $i$.
Note that since $\llbracket a, b \rrbracket \subseteq \Fix_{ip}(f)$ holds trivially, the former reduces to showing that $a \in \partial \Fix_{ip}(f)$ if $a \neq \0$, and likewise, $b \in \partial \Fix_{ip}(f)$ if $b \neq \1$.
This amounts to showing that $a$ or $b$, respectively, are accumulated by points at which $f^{ip}$ lies strictly above or below the diagonal.

 We restrict attention to the types of $a$ (hence, we specifically consider $a>\0$); preservation of the types of $b$ follows similarly.
 If $a$ is of type $(-,-)$ or $(+,+)$, the statement follows from \ref{item: types of fixed points 2} (proven below).
Note further that $a$ cannot be of type $(+,\pm)$ or $(-,\pm)$ under $f^p$: if it were then, as the left endpoint of a component of $\Fix_p(f)$, it would admit a $(+,+)$ or $(-,-)$ neighbourhood, respectively, contradicting the definition of $(+,\pm)$ or $(-,\pm)$.
 Fixing $i\in \N$, we go through the remaining cases.

Suppose $a$ is of type $(+,-)$ under $f
^p$ and let $U$ be a corresponding neighbourhood of $a$; that is, in particular, $f^p(y)\geq y$ for all $y\in U\cap \llbracket\0,a]$ (i.e.\ for all $y$ to the left of $a$) and  $f^p(y')>y'$ for some $y'\in U\cap\llbracket\0,a]$; it is worth remarking that $y'$ can be chosen arbitrarily close to $a$ for otherwise, $a\in \mathring \Fix_p(f)$.
Note that by replacing $U$ by $U\cap [y_-,\1 \rrbracket$ for some appropriate $y_-\in U\cap \llbracket \0,a)$, we may assume without loss of generality that $f^p\big(U\cap \llbracket \0,a]\big)\ssq U\cap \llbracket \0,b\rrbracket$ (in other words, no point in $U$ lying to the left of $a$ is mapped to the right of $\llbracket a, b \rrbracket$).
But then, as $\left.f^p\right|_{\llbracket a,b\rrbracket}=\Id$, we have $f^{ip}(y)\geq y$ for all $y\in U\cap \llbracket \0,a]$, and hence $a$ is of type $(+,\ast)$ under $f^{ip}$, showing that the first coordinate of its type is preserved. 
Note that $a\in\partial\Fix_{ip}(f)$ 
indeed holds, since $y'$ can be chosen arbitrarily close to $a$ and satisfies $f^{ip}(y')>y'$.

A similar argument to the above shows that if
$a$ is of type $(+,-)$ or $(\pm,-)$ under $f^p$, then $a$ is of type $(\ast,-)$ under $f^{ip}$, that is, the second coordinate of its type is preserved.

If $a$ is of type $(-,+)$ 
under $f^p$, then it follows from \ref{item: types of fixed points 3} (proven below) that $a$ is of type $(-,+)$ under $f^{ip}$.
Note that \ref{item: types of fixed points 3} also gives that if $a$ is of type $(\pm,+)$ under $f^p$, then it is of type $(\ast,+)$ under $f^{ip}$.
Altogether, the above demonstrates that whenever a coordinate of the type of $a$ under $f^{p}$ is $+$ or $-${}, it matches the corresponding coordinate of the type of $a$ under $f^{ip}$.

It therefore remains to consider the cases where $a$ is of type $(\pm, \ast)$ or $(\ast, \pm)$; while irrelevant for what follows, note that in the latter case, $a$ must necessarily be of type $(\pm, \pm)$.
We focus on the case where $a$ is of type $(\pm,\ast)$ under $f^p$; the other case follows analogously.
Let $U$ be a corresponding neighbourhood of $a$; in particular, there exist $y_-^-,y_-^+\in U\cap\llbracket \0,a]$ with $f^p(y_-^-)<y_-^-$ and $f^p(y_-^+)>y_-^+$.
Let $B$ be the connected component of $U$ that contains $y_-^-$ and where $f^p(y)<y$ for all $y\in B$.
The right endpoint $b'$ of $B$ is fixed and hence, we can find $y^-\in B$ sufficiently close to $b'$ to ensure that $f^{jp}(y^-)\in B$ for $j=1,\ldots,i$.
Similarly, let $C$ denote the connected component of $U$ that contains $y_-^+$ and where $f^p(y)>y$ for all $y\in C$.
Assume without loss of generality that $y_-^+>y_-$ (if that was impossible, then $a$ would be of type $(-,\ast)$ which is ruled out by the definition of $(\pm,\ast)$).
The left endpoint $c$ of $C$ is fixed
and hence, we can find $y^+\in C$ sufficiently close to $c$ to ensure that $f^{jp}(y^+)\in C$ for $j=1,\ldots,i$.
Consequently, we obtain that $a$ is of type $(\pm,\ast)$ under $f^{ip}$; let us emphasize that $a\in \partial \Fix_{ip}(f)$ since $y^-$ and $y^+$ can be chosen arbitrarily close to $a$.

Combining all of the above, we conclude that if $a$ is of type $(\alpha,\beta)$ under $f^p$, then it is also of type $(\alpha,\beta)$ under $f^{ip}$, thereby completing the proof of \ref{item: types of fixed points 1}.

Note that the argument in the last part of the above proof did not use the fact that $a$ is an element of $\partial\,{\mathring\Fix_p(f)}$.
That is, arguing as above one can actually show 
the following implication for all $x\in \partial\Fix_p(f)$ and $i\in\N$,
\begin{align}\label{eq: pm stays pm}
x \text{ is of type } (\pm,\ast) \text{ [resp. } (\ast,\pm)\text{] under } f^p\Longrightarrow
x \text{ is of type } (\pm,\ast) \text{ [resp.} (\ast,\pm)\text{] under } f^{ip}.
\end{align}
We will use \eqref{eq: pm stays pm}  when proving \ref{item: types of fixed points 4} and \ref{item: types of fixed points 6}(c).
\item Suppose $x$ is of type $(+,+)$ under $f^p$ and pick some neighbourhood $U$ of $x$ with $f^p(y)\geq y$ for all $y\in U$.
        As $f^p(x)=x$ and as $f$ is continuous, given $i\geq 2$, there is $V\ssq U$ with $f^{jp}(V)\ssq U$ for $j=1,\dots,i$.
        In particular, as $x$ is of type $(+,+)$, for each $y\in V$ we have $f^{jp}(y)\geq y$.
        Further, since $x\in\partial \Fix_p(f)$, there is $y\in V$ arbitrarliy close to $x$ with $y\notin \Fix_p(f)$, that is, $f^p(y)>y$ and hence, $f^{ip}(y)>y$.
        It follows that $x\in \partial \Fix_{ip}(f)$ so that altogether, $x$ is of type $(+,+)$.
        The case with $x$ of type $(-,-)$ works similarly.
        \item We only consider the case $(\ast,+)$, the other case is similar.
        Note that if $x$ is of type $(+,+)$, the statement follows from \ref{item: types of fixed points 2}.
        We may hence assume without loss of generality that $x$ is of type $(-,+)$ or $(\pm,+)$ and that $x$ admits no $(+,+)$-neighbourhood.
        Then there is a neighbourhood $U$ of $x$ with $f^p(y)\geq y$ for all $y\geq x$ in $U$ and there are $y_-<x<y_+$ in $U$ with $f^p(y_-)<y_-$ and $f^p(y_+)>y_+$.
        In fact, $y_-$ can be chosen arbitrarily close to $x$ as we assume $x$ to not be of type $(+,+)$.
        
        Consider $i\in \N$.
        We recursively define a finite sequence of closed intervals $[x,y_+]=A_1\supseteq A_2\supseteq \dots\supseteq A_i$ by setting $A_{j+1}$ (for $j=1,\ldots,i-1$) to be the connected component of $A_j\cap f^{-p}A_j$ that contains $x$.
        It is easy to see that $f^{ip}(y)\geq y$ for all $y\in A_i$ and that there is $y_+'\in A_i$ with $f^{ip}(y_+')>y_+'$.

        Now, let $y_-$ be as above and let $B$ denote the connected component of $U$ that contains $y_-$ and where $f^p(y)< y$ for all $y\in B$.  
        The right endpoint $b$ of $B$ lies in $\Fix_p(f)$.
        Hence, by continuity of $f$, there is $y_-'$ sufficiently close to $b$ such that $f^{jp}(y')\in B$ for $j=1,\dots,i$.
        In particular, $f^{ip}(y_-')<y_-'$.
        Moreover, since $y_-$ can be chosen arbitrarily close to $x$, it follows that $y_-'$ can also be chosen arbitrarily close to $x$, which shows that $x \in \partial \Fix_{ip}(f)$.
        Further, $[y_-',y_+']$ is a neighbourhood of $x$ where $f^{ip}(y)\geq y$ for all $y\geq x$ (and with $f^{ip}(y_+')> y_+'$ and $f^{ip}(y_-')< y_-'$).
        This shows that $x$ is of type $(-, +)$ or $(\pm, +)$; in particular, $x$ is of type $(\ast, +)$.
        \item 
            Consider $i\in\N$. Due to \eqref{eq: pm stays pm},  if $x$ is of type $(\pm,\ast)$ under $f^p$ then so it is under $f^{ip}$.
            It therefore suffices to show that $x$ is of type $(\ast,-)$ or $(\ast,\pm)$ under $f^{ip}$.

Let $U$ be a $(\pm,-)$-neighbourhood of $x$ under $f^p$; in 
particular, there exists $y_+\in U\cap[x,\1 \rrbracket$ with $f^p(y_+)<y_+$, 
and we may assume $y_+$ can be chosen arbitrarily close to $x$ (the case where $x$ is the left endpoint of a connected component of $\mathring{\Fix}_p(f)$ being already addressed in 
\ref{item: types of fixed points 1}). 
Note also that $x$ is 
accumulated from the left by fixed points of $f^p$, as it is of 
type $(\pm,-)$.

If there exists $y\in U\cap(x,y_+ ]$ with $f^p(y)\leq x$, then there 
necessarily exists $y'\in(x,y]$ with $f^p(y')\leq x <y'$ and 
$f^p(y')\in\Fix_p(f)$, giving $f^{ip}(y')<y'$. 
If instead $f^p(y)>x$ for all $y\in U\cap(x,y_+]$, then 
$f^p\big(U\cap [x,y_+]\big)\ssq U\cap [x,y_+]$, and one readily obtains $f^{ip}(y)\leq y$ for all 
$y\in U\cap(x,y_+]$ and $f^{ip}(y_+)<y_+$.
In either case, we find that $x$ is accumulated from the right by points at which $f^{ip}$ lies strictly below the diagonal; the statement follows.

\item 
        This is similar to \ref{item: types of fixed points 4}.
        \item In all of the following, $U$ is a $(+,-)$-neighbourhood of $x$ (under $f^p$).
        Note that we may choose $U$ to be arbitrarily small; if $U$ contained a neighbourhood of $x$ that was not a $(+,-)$-neighbourhood, then $x \in \partial \mathring{\Fix}_p(f)$, which is already addressed in \ref{item: types of fixed points 1}.

We begin by considering alternative (d).
To this end, suppose that $x \in \mathring{\Fix}_{2p}(f)$. 
By restricting to a sufficiently small neighbourhood, we have 
$f^{2p}(y)=y$ for all $y\in U$; and hence $f^{2ip}(y)=y$ for all 
$y\in U$ and $i\in\N$.
Furthermore, for all $y \in U$ and every $i \in \N$,
\(
f^{(2i+1)p}(y) = f^p(f^{2ip}(y)) = f^p(y).
\)
Accordingly, $x$ is of type $(+,-)$ under $f^{(2i+1)p}$.
This corresponds precisely to alternative (d).

Towards alternative (b), suppose there exists a $(\ast,+)$-neighbourhood $V$ of $x$ under $f^{2p}$.
It follows with \ref{item: types of fixed points 3} that $x$ is of type $(\ast,+)$ under $f^{ip}$ for all even $i$. 
Further, by passing to a smaller neighbourhood if necessary, we may assume that $f^p(y) \leq y$ and $f^{2p}(y)\geq y$ for all $y \in V \cap [x,\1 \rrbracket$, and that there is $y_+\in V\cap [x,\1 \rrbracket$ with $f^p(y_+)<y_+$.
Observe that this is only possible if $f^p(y_+) < x$; otherwise, we would have $f^p(y_+) \in V\cap [x, \1 \rrbracket$ and hence $f^{2p}(y_+) = f^p(f^p(y_+)) \leq f^p(y_+) < y_+$.
In fact, it follows that $f^p(y) < x$ for all $y \in V \cap (x, \1 \rrbracket$.
Now, given $i \in \N$, we may choose a neighbourhood $U \subseteq V$ of $x$ such that $f^{jp}U \subseteq V$ for $j = 1, \dots, i$.
It then follows directly that
\(f^{jp}y < x < y\) for every $y\in U\cap (x,\1 \rrbracket$ and odd \(j \leq i\).
Summing up, the above shows that if $x$ is of type $(\ast,+)$ under $f^{2p}$, then we are necessarily in alternative (b).

        As for alternative (a), assume now that $f^{2p}(y)\leq y$ for every $y\in U\cap(x,\1 \rrbracket$. 
        Note that if there is a neighbourhood $U$ of $x$ with $f^p\big(U\cap[x,\1 \rrbracket\big)\ssq [x,\1 \rrbracket$ (in other words, $f^p(y)\geq x$ for each $y\in U\cap[x,\1 \rrbracket$), it is easy to see that $x$ is of type $(\ast,-)$ under $f^{ip}$ for all $i\in \N$.
        To address the remaining possibilities that may correspond to alternative (a), we need
        \begin{claim}\label{claim: proof technical lemma}
        Given $i\in\N$, there exists a neighbourhood $V\ssq U$ of $x$ with $f^{ip}(y)\leq y$ for every 
        $y\in V\cap(x,\1 \rrbracket$.
        \end{claim}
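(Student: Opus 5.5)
The plan is to find, for a fixed $y$ close to $x$ on the right, a compact interval $W\ni y$ that is forward invariant under $f^p$ and has $y$ as its right endpoint. Then every iterate $f^{ip}(y)$ lies in $W$, so $f^{ip}(y)\le y$. No induction on $i$ is needed; the only care is in choosing $V$ small enough that all relevant sets stay inside the $(+,-)$-neighbourhood $U$.

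First choose $V\subseteq U$. By continuity of $f^p$ and $f^p(x)=x$, there is a neighbourhood $V\subseteq U$ of $x$ with $f^p(V)\subseteq U$. In particular, for $y\in V\cap(x,\1\rrbracket$ we have $[x,y]\subseteq U$ and $f^p([x,y])\subseteq U$.

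Now fix $y\in V\cap(x,\1\rrbracket$ and write $f^p([x,y])=[m,M]$, using that $f^p$ is continuous and $[x,y]$ is connected. Since $f^p(x)=x$, we get $m\le x\le M$. Since $U$ is a $(+,-)$-neighbourhood, $f^p(s)\le s\le y$ for $s\in[x,y]$, so $M\le y$. Set $W=[m,y]\subseteq U$, and check $f^p(W)\subseteq W$ in two parts.
\begin{itemize}
\item On $[x,y]$ this is immediate: $f^p([x,y])=[m,M]\subseteq W$.
\item For $z\in[m,x]$, the $(+,-)$ property on the left of $x$ gives $f^p(z)\ge z\ge m$.
\item For the upper bound on $[m,x]$, note $[m,x]\subseteq[m,M]=f^p([x,y])$. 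Hence $f^p([m,x])\subseteq f^{2p}([x,y])$.
\item The standing hypothesis of alternative (a) gives $f^{2p}(s)\le s\le y$ for all $s\in U\cap(x,\1\rrbracket$, and trivially at $s=x$. So $f^p(z)\le y$ for $z\in[m,x]$.
\end{itemize}
Thus $f^p(W)\subseteq W$. By induction $f^{ip}(y)\in W$ for every $i$, so $f^{ip}(y)\le y$. This holds for all $i\in\N$ at once, which is slightly stronger than the claim.

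The main obstacle is the upper bound for images of points left of $x$. The $(+,-)$ condition only says such points are pushed rightward, possibly far beyond $y$. The fix is the observation $[m,x]\subseteq f^p([x,y])$, which lets us bound $f^p$ on the left side by $f^{2p}$ on the right side, where the hypothesis $f^{2p}\le\Id$ is available. The remaining point to check is that $W\subseteq U$, so that both the $(+,-)$ inequalities and the $f^{2p}$ hypothesis apply on $W$. This is exactly what the choice $f^p(V)\subseteq U$ secures.
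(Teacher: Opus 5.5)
Your argument is correct, and it reaches the claim by a different route from the paper.

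The paper argues by contradiction:
\begin{itemize}
\item It shrinks $V$ so that $f^{jp}V\subseteq U$ for $j=1,\dots,i$, so its $V$ depends on $i$.
\item It takes the minimal $i'\le i$ at which some $y\in V\cap(x,\1\rrbracket$ has $f^{i'p}(y)>y$.
\item It shows that $i'>2$, that the last few iterates before time $i'$ lie to the left of $x$, and that they increase there.
\item It then applies the intermediate value theorem on $[x,f^{(i'-k)p}(y)]$ to find $y''$ with $f^p(y'')=f^{(i'-1)p}(y)$. This gives $f^{2p}(y'')>y''$, contradicting the standing hypothesis.
\end{itemize}

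You use the same mechanism: a point left of $x$ that is reached from the right is an $f^p$-image of a right point, so its next image is controlled by $f^{2p}\le\Id$ on $U\cap(x,\1\rrbracket$. You package it instead as the forward-invariant interval $W=[m,y]\subseteq U$. Connectedness of $f^p([x,y])$ replaces the paper's intermediate value step.

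This buys three things:
\begin{itemize}
\item Since $W$ is $f^p$-invariant and lies in $U$, orbits never leave the region where the sign conditions hold. You therefore need only $f^p(V)\subseteq U$, not the $i$-dependent condition.
\item You avoid the minimality and monotonicity bookkeeping.
\item You get a single $V$ that works for all $i$ at once, a mild strengthening of the claim.
\end{itemize}
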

         \begin{proof}[Proof of Claim~\ref{claim: proof technical lemma}]
        Assume for a contradiction that there exists $i\in \N$ such that there are points $y>x$ arbitrarily close to $x$ with $f^{ip}(y)>y$.
        Let $V\ssq U$ be a $(+,-)$-neighbourhood of $x$ under $f^p$ such that $f^{jp}V\ssq U$ for $j=1,\dots,i$.
        Let further $i'\leq i$ be minimal such that there is $y\in V\cap (x,\1 \rrbracket$ with $f^{i'p}(y)>y$; note that we necessarily have $f^{(i'-1)p}(y)<x$.
        Since $V \subseteq U$, our assumptions---specifically, the assumption that $f^{2p}(y) \leq y$ for every $y \in U \cap (x, \1 \rrbracket$---imply that $i' > 2$.
        Observe that if we had $f^{(i'-2)p}(y)>x$ (that is, if $f^{(i'-2)p}(y)\in V\cap(x,\1 \rrbracket$), then with $y'=f^{(i'-2)p}(y)\leq y$ we would obtain $f^{2p}(y')=f^{i'p}(y)>y\geq y'$, contradicting our assumptions.
        Hence, we necessarily have $f^{(i'-2)p}(y)<x$.
\newline
        \indent
        Let $k > 1$ be minimal such that $f^{(i'-k)p}(y) > x$. 
        As $f^{(i'-(k-1))p}(y) < \ldots < f^{(i'-1)p}(y)$ (which holds since $f^{jp}V\ssq U$ for all $j=1,\ldots,i$ and $f^p(z) \geq z$ for all $z \in U \cap \llbracket \0,x)$), there necessarily exists $y'' \in [x, f^{(i'-k)p}(y)] \subseteq [x,y]$ such that $f^p(y'') = f^{(i'-1)p}(y)$. This yields $f^{2p}(y'') = f^{i'p}(y) > y \geq y''$, contradicting our assumptions.
         \end{proof}
         With the above claim and the discussion preceding it, it only remains to show that if there exist points $y' \in (x, \1 \rrbracket$ arbitrarily close to $x$ such that $f^p(y') < x$, then $x$ is accumulated from the right by points at which $f^{ip}$ lies strictly below the diagonal.
        \newline\indent
        Now, if there also exist points $y \in (x, \1 \rrbracket$ arbitrarily close to $x$ such that $f^p(y) \geq x$, then $x$ is necessarily accumulated by points $y'' \in (x, \1 \rrbracket$ satisfying $f^p(y'') = x$.
        It follows $f^{ip}(y'') = x < y''$ for all $i \in \N$, so $x$ is indeed of type $(\ast, -)$ under $f^{ip}$ in this case.
\newline
\indent
   We may therefore assume that $f^p(y) < x$ for all $y \in U \cap (x, \1 \rrbracket$.
In this case, if $x$ were not accumulated by points $y \in (x, \1 \rrbracket$ such that $f^{2p}(y) < y$, then we would have $f^{2p}(y) = y$ for all $y \geq x$ sufficiently close to $x$ (Claim~\ref{claim: proof technical lemma}), and therefore
\[
f^{2p}(f^p(y)) = f^{p}(f^{2p}(y)) = f^p(y)
\]
for all such $y$.
This implies that in fact $f^{2p}(y) = y$ on a neighbourhood of $x$, that is, $x \in \mathring{\Fix}_{2p}(f)$, which corresponds to alternative~(d) as already discussed above.
Accordingly, there must exist points $y \in (x, \1 \rrbracket$ arbitrarily close to $x$ such that $f^{2p}(y) < y$.
Given $i \in \N$, Claim~\ref{claim: proof technical lemma} implies that $f^{ip}(y) < y$ for any such $y$ sufficiently close to $x$.
Altogether, we conclude that if $x$ is of type $(\ast, -)$ under $f^{2p}$, then we are in alternative~(a).
\newline
\indent
Finally, let us consider alternative (c), that is, suppose $x$ is of type $(\ast,\pm)$ under $f^{2p}$.
From \eqref{eq: pm stays pm} we obtain that $x$ is of type $(\ast,\pm)$ under $f^{2pi}$ for every $i\in\N$.
\newline
\indent
Moreover, if $x$ is of type $(\ast,\pm)$ under $f^{2p}$, then $x$ 
is accumulated from the right by fixed points of $f^{2p}$, and at 
least one of the following holds: $x$ is accumulated from the right by fixed points of $f^p$, or $x$ is accumulated from the right by fixed points $y'$ of $f^{2p}$ satisfying $f^p(y')<x<y'$ (to see this, use that $x$ is of type $(\ast,-)$ under $f^p$).
\newline
\indent
In the latter case, $x$ is clearly accumulated from the right by 
points $y'$ with $f^{ip}(y')<y'$ for every odd $i$. 
In the former case, for any $y \in\d \Fix_p(f)\cap (x,\1 \rrbracket$ sufficiently close to $x$, one can find points $z$ arbitrarily close to $y$ such that $f^p(z)<z$.
Consequently, for any $i \in \N$ and any neighbourhood $U$ of $x$, there exists $z \in \big(U \cap (x, \1 \rrbracket\big) \setminus \Fix_p(f)$ such that $f^{jp}(z) \in U \cap (x, \1 \rrbracket$ and hence $f^{jp}(z) < z$ for all $j = 1, \dots, i$.
It follows that 
$x\in \partial\Fix_{ip}(f)$ and that $x$ is not of type $(\ast,+)$ under $f^{ip}$ for odd $i$. 
This is alternative~(c).
        \item This is similar to \ref{item: types of fixed points 6}.\qedhere
\end{enumerate}
\end{proof}

\appendix
\section{Dense periodicity on non-compact intervals}\label{sec: appendix}
In this appendix, we discuss how the proof of Theorem~\ref{thm: CovenMulvey}, given in \cite{CovenMulvey1988} for compact intervals, extends to arbitrary intervals.

We briefly recall some notation and terminology.
Let $\I = \llbracket \0, \1 \rrbracket$ be an arbitrary interval, where $\rrbracket$ denotes either $)$ or $]$, $\llbracket$ denotes either $($ or $[$, $\0 \in \R \cup \{-\infty\}$, and $\1 \in \R \cup \{\infty\}$ with $\1 > \0$.
Recall that an endomorphism $f \colon \I \to \I$ \emph{separates} $\I$ if $\I$ admits a decomposition $\I = L \cup C \cup R$ into relatively closed intervals with pairwise disjoint interiors, arranged from left to right in $\I$, such that $L$ and $R$ are non-degenerate (while $C$ may be degenerate), $f(L) \ssq R$, $f(C) = C$, and $f(R) \ssq L$.
Note that if $f$ is surjective, the inclusions are necessarily equalities, that is, $f(L) = R$ and $f(R) = L$.

Following the terminology of \cite{CovenMulvey1988}, we call an endomorphism $f$ on $\I$ \emph{pointwise non-wandering} if for every non-empty open set $U \subseteq \I$, there exists $n \in \N$ such that $f^n(U) \cap U \neq \emptyset$.
Note that if $\Per(f)$ is dense in $\I$, then $f$ is necessarily pointwise non-wandering.
The following hence immediately implies Theorem~\ref{thm: CovenMulvey}.

\begin{thm}\label{thm: CovenMulvey appendix}
Let $f\colon \I \to \I$ be continuous and pointwise non-wandering.
If $f$ does not separate $\I$, then there exists a collection (possibly finite or empty) $J_1,J_2,\ldots$ of non-degenerate, closed (as subsets of $\I$), invariant subintervals of $\I$, with pairwise disjoint interiors, such that
\begin{itemize}
    \item $\left.f\right|_{J_i}$ is topologically mixing for each $i$, and
    \item $\left.f\right|_{\I \setminus \bigcup_i J_i}$ is the identity map.
\end{itemize}

If $f$ separates $\I$, $f^2$ does not, and the conclusion of the preceding paragraph holds for $f^2$.
\end{thm}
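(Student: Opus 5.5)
We follow the strategy of Coven and Mulvey \cite{CovenMulvey1988}, checking at each step that compactness of $\I$ is either not needed or can be replaced by an exhaustion of $\I$ by compact subintervals. We first treat the case where $f$ does not separate $\I$. The fixed point set $\Fix_1(f)$ is closed in $\I$, so its complement decomposes into countably many open (relative to $\I$) components $W$. On each such $W$ the graph of $f$ lies strictly above or strictly below the diagonal.

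The key step is to show that each $\overline{W}$ sits inside a non-degenerate closed invariant interval on which $f$ is transitive. We consider the orbit of $W$ and set $J_W=\overline{\bigcup_{n\ge0} f^n(W)}$. This is connected: the pointwise non-wandering property forces $f^n(W)\cap W\neq\emptyset$ for some $n$, and consecutive images overlap through the endpoints of $W$, which are fixed points (or boundary points $\0,\1$). Hence $J_W$ is a closed invariant interval.

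Next we argue that $f|_{J_W}$ is transitive. The plan is to show, as in \cite{CovenMulvey1988}, that for any open $U\subseteq J_W$ the set $\bigcup_n f^n(U)$ is dense in $J_W$. Since every point of $\mathring J_W$ eventually moves monotonically toward a fixed point unless it returns, non-wandering forces images of $U$ to cover neighbourhoods of the relevant fixed endpoints. Note that transitivity is a statement about open sets, so it is insensitive to compactness; non-compactness only enters through possibly unbounded images. We handle this by working inside compact subintervals $K\subseteq J_W$ and using that $f(K)$ is compact.

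Distinct $J_W$ either coincide or have disjoint interiors; otherwise their union would be an invariant interval carrying a transitive dynamics, and maximality gives equality. Having obtained transitivity on each $J_W$, we apply Theorem~\ref{thm: transitive vs mixing maps}, which is already stated for arbitrary intervals. Either $f|_{J_W}$ is mixing, or there is a unique fixed point $c\in\mathring J_W$ with $f$ swapping the two halves of $J_W$.

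In the second alternative we must derive a separation of $\I$. We set $L=\llbracket\0,c]$-part and $R$ the other side. Each outer component of $\I\setminus J_W$ has an endpoint of $J_W$ that is swapped by $f$ with the other endpoint of $J_W$. The complement of $J_W$ on each side consists of points that are non-wandering, and we show $f$ maps the left complement into the right one and vice versa. Together with $C$ taken as the maximal interval around $c$ fixed appropriately, this contradicts the assumption that $f$ does not separate $\I$. On $\I\setminus\bigcup J_W$ every point is fixed by construction, which completes the first paragraph of the theorem.

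For the separating case, we first show that $f^2$ is pointwise non-wandering, which follows from the standard argument for non-wandering of iterates on intervals. We then show that $f^2$ does not separate $\I$. A separation for $f^2$ restricted to $L$ or $R$ would, by pulling back through $f$, yield a pointwise non-wandering contradiction: an open set would be mapped permanently away from itself. We then apply the first part to $f^2$.

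I expect the main obstacle to be the transitivity of $f|_{J_W}$ together with the exclusion of the non-mixing alternative, since these are the places where the compact-interval proof uses attracting endpoints and extremal values of $f$. At unbounded ends these must be replaced by arguments showing that points cannot escape to $\pm\infty$ monotonically. I would first try to rule this out directly by non-wandering: a component of $\I\setminus\Fix_1(f)$ adjacent to an open end on which $f(y)>y$ would make small intervals wander.
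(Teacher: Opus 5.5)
Your plan has a genuine gap at the step you flag, and it is not a compactness issue. Your sketch of transitivity of $f|_{J_W}$ uses only non-wandering and fixed points. It never uses the hypothesis that $f$ does not separate $\I$, and without that hypothesis the claim is false. For example, $f(x)=-x$ on $[-1,1]$ is pointwise non-wandering, $W=(0,1]$ gives $J_W=[-1,1]$, and the orbit of $(1/2,1)$ never meets $(0,1/4)$.

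What is missing are the two Coven--Mulvey lemmas the paper quotes, which are where non-separation enters. For non-separating pointwise non-wandering $f$:
(a) $U^*=\overline{\bigcup_{k\ge 0}f^k(U)}$ is an interval for every non-degenerate interval $U$;
(b) every endpoint of a closed invariant interval that lies in the interior of $\I$ is fixed.

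With these, transitivity is short. Take open $U\subseteq J_W$. Non-wandering, with return times made arbitrarily large, gives some $m$ with $f^m(U)\cap W\neq\emptyset$. So $U^*$ is a closed invariant interval whose interior endpoints are fixed and which meets the fixed-point-free interval $W$. Hence $W\subseteq U^*$, and therefore $J_W\subseteq U^*$.

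Lemma (b) also settles the non-mixing alternative at once. The unique fixed point $c$ is interior to $J_W$, so $J_W$ has no endpoint in the interior of $\I$. Thus $J_W=\I$, and the two halves at $c$ separate $\I$.

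Your substitute, that $f$ maps the left complement of $J_W$ into the right one, is only asserted. It can be proved: points outside $J_W$ mapped into $\mathring{J}_W$ would form a wandering open set. This forces the endpoints of $J_W$ to be swapped, and forces each complementary component (whose image is an interval) onto the far side of $J_W$. Likewise, your ``maximality'' argument for disjoint interiors should be replaced by the fact that, for a transitive map, a closed invariant set with non-empty interior is the whole space.

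The second part is not argued either, and the mechanism you suggest does not exist. A separation $\I=L'\cup C'\cup R'$ for $f^2$ produces no wandering set, since $f^4(L')\subseteq L'$; and a separating map such as $x\mapsto -x$ is itself pointwise non-wandering. The obstruction is that $f^2$ is a \emph{square}. Compare: no continuous $f$ on $\R$ satisfies $f\circ f=-\Id$, since $f$ would be injective, hence monotone, making $f\circ f$ increasing.

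The paper argues by cases:
\begin{itemize}
\item On half-open $\I$ the hypothesis is void, because a pointwise non-wandering map never separates $\I$: the compact piece containing the closed end would have to be mapped onto the non-compact piece.
\item On open $\I$, surjectivity plus bookkeeping of which of $f(L')$, $f(R')$ are left- or right-unbounded shows that $f^2(L')$ would be left-unbounded, contradicting $f^2(L')=R'$.
\item The compact case is \cite[Lemma~7]{CovenMulvey1988}.
\end{itemize}
This is where non-compactness actually matters. The first part transfers from the compact case essentially verbatim, so the exhaustion by compact subintervals you propose there does not address the real difficulty.
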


We now follow the proof of Theorem~\ref{thm: CovenMulvey appendix} as presented for compact $\I$ in \cite{CovenMulvey1988}, quoting relevant intermediate statements and definitions but providing proofs only where there are non-trivial differences between the compact and non-compact cases.

In Lemmas~\ref{lem: coven mulvey lemma 2}--\ref{coven mulvey lemma 6} below, $f$ is a pointwise non-wandering endomorphism which does not separate $\I$.
For a subset \( J \) of \( \I \), let \( J^* \) be the smallest closed, invariant set containing \( J \); that is,
\[
J^* = \overline{\bigcup_{k \geq 0} f^k(J)}.
\]

\begin{lem}[{cf.~\cite[Lemma~2]{CovenMulvey1988}}]\label{lem: coven mulvey lemma 2}
For every non-degenerate subinterval $J$ of $\I$, $J^*$ is a closed interval.
\end{lem}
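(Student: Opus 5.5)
We need to show that for every non-degenerate subinterval $J\ssq\I$, the set $J^*=\overline{\bigcup_{k\ge0}f^k(J)}$ is a closed interval. Since each $f^k(J)$ is connected (a possibly degenerate interval) and the closure of a connected set is connected, it suffices to show that $\bigcup_{k\ge 0} f^k(J)$ is connected. Closedness is relative to $\I$, which is automatic from the definition of $J^*$; in the non-compact case this is exactly where one must be careful, because the union may escape towards a missing endpoint $\0$ or $\1$. Even then its closure in $\I$ is a relatively closed interval, so no genuine difference from \cite{CovenMulvey1988} arises.

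The plan is to find chains of overlapping images using the pointwise non-wandering property. Set $J_k=f^k(J)$. First, because $J$ has non-empty interior, there is $n\ge 1$ with $f^n(\mathring J)\cap\mathring J\neq\emptyset$. Then $K=\bigcup_{j\ge0} f^{jn}(J)$ is connected: consecutive sets $f^{jn}(J)$ and $f^{(j+1)n}(J)$ intersect, since $f^{jn}$ maps the overlap of $J$ and $f^n(J)$ into both. Applying the same reasoning to $f^i(J)$ for $i<n$ gives connected sets $K_i=f^i(K)$, and $\bigcup_k J_k=\bigcup_{i=0}^{n-1}K_i$, with $f(K_{n-1})\ssq K_0\cup\dots$ cyclically.

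It remains to show that the finitely many intervals $\overline{K_0},\dots,\overline{K_{n-1}}$ form a connected union. Group them into connected components of their union, say $M_1,\dots,M_r$. These are pairwise disjoint closed intervals, and $f$ permutes them cyclically, since $f(K_i)\ssq K_{i+1}$ and $f$ maps a connected union into a connected set. Suppose $r\ge2$. If $r\ge 3$, or if $r=2$ with a suitable arrangement, I would show that some gap between components yields a contradiction with the non-wandering property or with non-separation. For $r=2$, write the components $M_1<M_2$ with $f(M_1)\ssq M_2$ and $f(M_2)\ssq M_1$. The plan is then to build a separation: let $L$ be the set of points to the left of the gap, $R$ the set of points to the right, and $C$ the gap itself, adjusted by pulling back. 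This is essentially Coven--Mulvey's argument, using the intermediate value theorem to get a fixed point in the gap and the invariance of $\llbracket\0,M_1]$-type sets derived from non-wandering.

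For $r\ge3$, a cyclic permutation of at least three disjoint intervals on a line forces some complementary open interval $G$ whose points must eventually be mapped across. The plan is to show that $G$ contains a non-empty open set that never returns to itself, contradicting the pointwise non-wandering property.

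I expect the main obstacle to be the step that rules out $r\ge2$, namely the construction of the separation when $r=2$ and the wandering argument when $r\ge3$, which requires care with unbounded ends. Here I would follow \cite[Lemma~2]{CovenMulvey1988} verbatim and check that compactness is used only through the intermediate value theorem and connectedness, both of which remain valid on arbitrary intervals.
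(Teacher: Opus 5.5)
\paragraph{A gap at the decisive step.} The paper gives no argument of its own for this lemma: it quotes \cite[Lemma~2]{CovenMulvey1988} and only indicates that the compact proof carries over.

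Your reduction is correct. $J^*$ is a union of disjoint closed intervals $M_1<\dots<M_r$ with $f(M_j)\ssq M_{\pi(j)}$ for a cyclic permutation $\pi$. Each is non-degenerate, since $f(M_j)$ is dense in $M_{\pi(j)}$ and one of them contains $J$.

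The whole content of the lemma, however, is the exclusion of $r\ge 2$. That is where non-separation must enter: for $x\mapsto 1-x$ on $[0,1]$ and $J=[0,1/4]$ one has $r=2$. At this step you only announce intentions, and the justification you sketch for $r=2$ does not work as stated. A set like $\llbracket\0,\max M_1]$ is never invariant, since it is mapped into $M_2$. The fixed point in the gap gives none of the required inclusions. For $C=[\max M_1,\min M_2]$ you a priori only know $f(C)\supseteq C$, and nothing is said about points to the left of $M_1$ or to the right of $M_2$. For $r\ge 3$ you do not explain why an open set in a gap should never return.

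\paragraph{The missing observation.} Since $J^*$ is closed and $f(J^*)\ssq J^*$, no point of $\I\setminus J^*$ can be mapped into the interior (relative to $\I$) of $J^*$. Otherwise a small open interval disjoint from $J^*$ is sent into $J^*$ and never comes back, contradicting pointwise non-wandering. This settles both cases.

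\emph{Case $r\ge 3$.} Some $j$ satisfies $|\pi(j+1)-\pi(j)|\ge 2$; otherwise injectivity forces $\pi$ to be the identity or the order reversal, neither of which is an $r$-cycle. So $f([\max M_j,\min M_{j+1}])$ contains some entire $M_k$ lying strictly between $M_{\pi(j)}$ and $M_{\pi(j+1)}$. The two endpoints are mapped into $M_{\pi(j)}\cup M_{\pi(j+1)}$, so a point of the open gap is mapped into $\mathring M_k$, which is impossible.

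\emph{Case $r=2$.} Let $G$ be the open gap. Then $f(G)$ is connected, avoids $\mathring M_1\cup\mathring M_2$, and its closure contains $f(\max M_1)\in M_2$ and $f(\min M_2)\in M_1$. Hence $f(G)\ssq\overline G$, and so $f(\overline G)=\overline G$ with the endpoints swapped. Now take a non-empty outer piece $\llbracket\0,\min M_1)$. Its image is connected, avoids $\mathring M_1\cup\mathring M_2$ and accumulates at $f(\min M_1)\in M_2$. So it lies either in $\overline G$ or in $[\max M_2,\1\rrbracket$. The first is impossible, because the piece would then wander into the invariant set $\overline G$. The right outer piece is handled symmetrically. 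Hence $\llbracket\0,\max M_1]\cup\overline G\cup[\min M_2,\1\rrbracket$ is a separation, a contradiction.

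So your naive choice of $L$, $C$, $R$ is in fact the right one. It is this observation, not any invariance of $\llbracket\0,M_1]$-type sets, that makes it a separation. As you suspected, no compactness is used.
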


\begin{lem}[{cf.~\cite[Lemma~3]{CovenMulvey1988}}] \label{lem: endpoints are fixed}
If $J$ is a closed, invariant interval, then any endpoint of $J$ which is interior to $\I$ (that is, an element of $\I\setminus \{\0,\1\}$) is a fixed point of $f$.
\end{lem}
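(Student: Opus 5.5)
The statement to prove is Lemma~\ref{lem: endpoints are fixed}: if $J$ is a closed invariant interval and $c$ is an endpoint of $J$ lying in $\I\setminus\{\0,\1\}$, then $f(c)=c$. I argue by contradiction, using only invariance of $J$, continuity, and the pointwise non-wandering property. No separation hypothesis or non-compactness issue should enter. By symmetry, take $c$ to be the right endpoint of $J$, so $J=\llbracket a,c]$ with $c<\1$. Suppose $f(c)\neq c$. Since $f(J)\ssq J$, this forces $f(c)<c$.

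The first step is to find a small open interval $U=(c,c+\delta)\ssq\I$ to the right of $c$ that $f$ maps strictly into $\llbracket\0,c)$. This is possible because $c<\1$ lets us move right of $c$ inside $\I$, and by continuity $f(y)<c$ for all $y$ close to $c$. It is convenient also to arrange $f(U)\ssq\mathring J$, or at least $f(U)\ssq J$; this holds for small $\delta$ when $f(c)$ is an interior point of $J$. If $f(c)=a$ is the left endpoint, one only gets $f(U)$ near $a$, so I will argue more robustly as follows.

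Put $V=U\cup\big(U\cap f^{-1}(\mathring J)\big)$; it is cleaner to split into two cases according to whether $f(c)\in\mathring J$. In general, the set of points $y\in U$ with $f(y)\in J$ is relatively closed in $U$, and it contains all of $U$ near $c$ whenever $f(c)\in J$, which holds by invariance. So after shrinking $U$, continuity gives $f(U)\ssq(\text{a neighbourhood of }f(c))\cap\llbracket\0,c)$. Whenever this neighbourhood meets the complement of $J$, that is, when $f(c)=a$ and points slightly left of $a$ are hit, the image need not lie in $J$.

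To handle that case, I use the full orbit. I shrink to $U$ such that $f(U)\ssq(\0,c)$. Points of $f(U)$ lying in $J$ stay in $J$ forever, so they never return to $U$, because $U\cap J=\emptyset$. Points of $f(U)$ lying left of $a$ form a set near $a$; if that set is nonempty, then $a>\0$ and the symmetric situation occurs at $a$.

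The cleanest route is therefore to work with the open set $W=f^{-1}(\mathring J\cup\{\text{points near } f(c) \text{ in } J\})$ and choose a nonempty open $U'\ssq U$ with $f(U')\ssq J$. Such a $U'$ exists: either $f(c)\in\mathring J$, or $f(c)=a$, in which case the points $y>c$ near $c$ with $f(y)\geq a$ contain a nonempty open set unless $f(y)<a$ for all $y$ near $c$. In that last subcase, take instead $U'$ to be a small open interval right of $c$ with $f(U')$ left of $a$ and close to $a$, and iterate the argument at $a$. Now, by pointwise non-wandering, some $n\geq1$ has $f^n(U')\cap U'\neq\emptyset$. But $f(U')\ssq J$ and $J$ is invariant, so $f^n(U')\ssq J$, which is disjoint from $U'$. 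This is a contradiction.

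I expect the main obstacle to be the case $f(c)=a$ with points near $c$ mapped just left of $a$, where $f(U')$ escapes $J$. There I will use invariance at the other end: if $a>\0$ and $f(a)\ne a$, then $f(a)>a$. Composing, $f^2$ maps a right-neighbourhood of $c$ into a neighbourhood of $f(a)$. If $f(a)\in\mathring J$, this lands in $J$ for small sets, and the same wandering contradiction applies with $f^2$ in place of $f$. The remaining configuration, $f(c)=a$ and $f(a)=c$, means $f$ swaps the endpoints; then $f^2$ near $c$ maps the right side of $c$ into a neighbourhood of $c$. I would resolve this by noting that $f(\llbracket\0,a))$, $f((c,\1\rrbracket))$ and $J$ then realise a separation of $\I$ (with $L,R$ the outer pieces and $C=J$ containing the middle). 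This contradicts the standing assumption that $f$ does not separate $\I$, or is otherwise excluded by the non-wandering property applied to small intervals adjacent to $J$ that are swapped without returning near themselves.
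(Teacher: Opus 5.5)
Your basic mechanism is the right one: a non-empty open set outside $J$ that is mapped into $J$ never returns, which contradicts pointwise non-wandering. But you apply it only on small neighbourhoods of the endpoints, and local information cannot close the argument. Two cases remain open.

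\emph{First}, the case $f(c)=a$ together with $f(a)=a$ is never treated; you only discuss $f(a)\neq a$. Here a small interval $U'$ to the right of $c$ is sent just left of $a$ and then stays near the fixed point $a$. So nothing local yields a contradiction, and whether $U'$ returns depends on the behaviour of $f$ far from $J$.

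\emph{Second}, in the swap case $f(c)=a$, $f(a)=c$, you assert that $\llbracket\0,a]$, $J$, $[c,\1\rrbracket$ form a separation. This requires $f(\llbracket\0,a])\ssq[c,\1\rrbracket$ and $f([c,\1\rrbracket)\ssq\llbracket\0,a]$ on these \emph{entire} intervals, whereas you only control $f$ near $a$ and $c$. Your fallback, that the swap is ``otherwise excluded by the non-wandering property'', is false. The map $x\mapsto 3-x$ on $[0,3]$ is an involution, hence pointwise non-wandering; $J=[1,2]$ is invariant, and its endpoints are swapped. So, contrary to your opening remark, the standing assumption that $f$ does not separate $\I$ is indispensable, and it must be reached through a global argument.

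The missing idea is to run the wandering argument globally. (The paper itself gives no separate proof; it defers to the compact-case argument of \cite{CovenMulvey1988}.)
\begin{itemize}
\item The set $(\I\setminus J)\cap f^{-1}(\mathring J)$ is open in $\I$, and each of its points enters $J$ for good, so it is empty.
\item Hence the image of each component of $\I\setminus J$, namely $\llbracket\0,a)$ and $(c,\1\rrbracket$, is an interval disjoint from $(a,c)$. It therefore lies entirely in $\llbracket\0,a]$ or entirely in $[c,\1\rrbracket$.
\item Since $f(c)\in J\cap\overline{f((c,\1\rrbracket))}$, either $f(c)=c$, or $f(c)=a$ and $f((c,\1\rrbracket))\ssq\llbracket\0,a]$.
\item In the latter case $a>\0$; otherwise $(c,\1\rrbracket$ would be mapped into $\{a\}\ssq J$. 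The same reasoning at $a$ then gives $f(a)\in\{a,c\}$.
\item If $f(a)=a$, then $a\in\overline{f(\llbracket\0,a))}$ forces $f(\llbracket\0,a))\ssq\llbracket\0,a]$. So $\llbracket\0,a]$ is forward invariant, and $(c,\1\rrbracket$ wanders.
\item If $f(a)=c$, then $f(\llbracket\0,a))\ssq[c,\1\rrbracket$, and $f(J)=J$ by invariance and the intermediate value theorem. Thus $\llbracket\0,a]\cup J\cup[c,\1\rrbracket$ is a separation, contradicting the hypothesis.
\end{itemize}

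A smaller slip: your dichotomy ``$f\geq a$ on an open set unless $f<a$ near $c$'' overlooks the possibility that $f\leq a$ with equality on a nowhere dense set accumulating at $c$. This is harmless, but it should be stated correctly.
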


Let us write
\[
\{x\in \I\colon f(x)\neq x\}=\I\setminus \Fix_1(f)
\]
as a countable union of disjoint open intervals $G_n$.

\begin{lem}[{cf.~\cite[Lemma~5]{CovenMulvey1988}}]\label{lem: transitivity on Gn}
Each $\left.f\right|_{G_n^*}$ is transitive, and hence for any $n$ and $m$, $G_n^*$ and $G_m^*$ are either equal, disjoint, or meet in exactly one point.
\end{lem}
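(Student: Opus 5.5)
We need to show that each $\left.f\right|_{G_n^*}$ is transitive, and then derive the intersection pattern of the sets $G_n^*$. The argument follows Coven and Mulvey's compact-case proof, checking that compactness is never genuinely used.

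\textbf{Step 1: $G_n^*$ is a closed invariant interval.} By Lemma~\ref{lem: coven mulvey lemma 2}, $G_n^*$ is a closed interval, and it is invariant by construction. Its endpoints interior to $\I$ are fixed by Lemma~\ref{lem: endpoints are fixed}.

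\textbf{Step 2: reduction to a property of open sets.} Let $U,V\ssq G_n^*$ be non-empty relatively open sets. It suffices to show that some forward image of $U$ meets $G_n$ in a non-degenerate set. Indeed, suppose this holds for every such $U$, and suppose we also know that the forward orbit of every non-degenerate subinterval $W\ssq G_n$ accumulates on all of $G_n^*$. Then $\overline{\bigcup_k f^k(W)}=W^*\supseteq G_n^*$, because $W^*$ is a closed invariant interval (Lemma~\ref{lem: coven mulvey lemma 2}) containing $W$ and, by Lemma~\ref{lem: endpoints are fixed}, it cannot end inside $G_n$, where no fixed points lie. Hence $W^*\supseteq G_n$, so $W^*\supseteq G_n^*$. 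Consequently the iterates of $W$ meet $V$, and transitivity follows.

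\textbf{Step 3: iterates of $U$ re-enter $G_n$ (main obstacle).} Take a small interval $U$ in $G_n^*$. By definition of $G_n^*$, some $f^k(G_n)$ meets $U$, so $U$ contains $f^k(w)$ for a subinterval $W\ssq G_n$. We must then bring $U$ back to $G_n$. The idea is to use pointwise non-wandering on suitable small open sets together with non-separation, as in \cite{CovenMulvey1988}:
\begin{itemize}
\item If $U$ avoids fixed points, then the $G_m$ containing it satisfies $G_m^*\ssq G_n^*$, and non-wandering forces $f^j(U)\cap U\neq\emptyset$ for some $j$.
\item On $G_n$ the sign of $f(x)-x$ is constant, so points drift monotonically unless they jump across a fixed point. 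Non-wandering of a small interval near $\d G_n$ prevents pure drift.
\end{itemize}
The key claim is that $f^j(U)\cap G_n$ has non-empty interior for some $j$. Were it otherwise, $G_n$ would split $G_n^*$ into two sides that are interchanged or confined, producing either a separation or a wandering interval, which is excluded. I expect this step to be the hard case analysis.

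Compactness enters Coven and Mulvey's proof only through the existence of images of closed intervals and of extremal fixed points. Both remain valid here, using relatively closed sets and Lemma~\ref{lem: endpoints are fixed} applied to endpoints interior to $\I$. Non-compact ends give no trouble, since unbounded endpoints simply never occur as relevant fixed points.

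\textbf{Step 4: the intersection pattern.} Suppose $G_n^*$ and $G_m^*$ overlap in a non-degenerate interval. Their intersection is relatively open in each, so by transitivity the orbit of $G_m$ is dense in $G_n^*$, giving $G_m^*\supseteq G_n^*$; symmetrically $G_n^*\supseteq G_m^*$, so the two sets are equal. Otherwise they are disjoint or meet in exactly one point, since both are intervals.
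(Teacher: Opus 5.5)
\textbf{There is a genuine gap in Step 3.} Your Steps 1, 2 and 4 are sound reductions, but Step 3, which carries the whole content of the lemma, is not proved. The claim that some $f^j(U)$ meets $G_n$ in a non-degenerate set is only asserted:
\begin{itemize}
\item your first bullet produces returns of $U$ to itself, not to $G_n$;
\item the second bullet is a heuristic;
\item the sentence about $G_n$ splitting $G_n^*$ into sides that are ``interchanged or confined, producing either a separation or a wandering interval'' announces a case analysis that is never carried out.
\end{itemize}
As written, transitivity is established only conditionally on this unproved key step. A minor point in Step 4: $G_n^*\cap G_m^*$ is closed, not relatively open. You should argue with its interior $O$, whose orbit is dense in $G_n^*$ by transitivity and stays in the closed invariant set $G_m^*$.

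\textbf{How to close it.} The missing step is short, and you already have both ingredients. As you note, $U\subseteq G_n^*=\overline{\bigcup_k f^k(G_n)}$ gives $k\geq 0$ and a non-degenerate open interval $W\subseteq G_n$ with $f^k(W)\subseteq U$. Apply pointwise non-wandering to $W$ rather than to $U$, using that every non-empty open set has arbitrarily large return times. Indeed, if $f^{n_1}(W)\cap W\neq\emptyset$, then $W_1=W\cap f^{-n_1}(W)$ is non-empty and open, and a return of $W_1$ at time $n_2$ yields a return of $W$ at time $n_1+n_2$. Under dense periodic points this is immediate.

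For a return time $m\geq k$ we get $f^{m-k}(U)\supseteq f^m(W)$, which meets $W\subseteq G_n$. Hence $U^*\cap G_n\neq\emptyset$. Exactly as in your Step 2, $U^*$ is a closed invariant interval (Lemma~\ref{lem: coven mulvey lemma 2}) whose endpoints interior to $\I$ are fixed (Lemma~\ref{lem: endpoints are fixed}). So $U^*\supseteq G_n$, and therefore $U^*\supseteq G_n^*$, which is transitivity.

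No case analysis on separation is needed. Non-separation enters only through Lemma~\ref{lem: coven mulvey lemma 2}: for $f(x)=1-x$ on $[0,1]$, which separates, $U^*$ is not an interval and $f$ is not transitive on $G_n^*=[0,1]$. No compactness is used at all, which is why the paper gives no separate proof here and simply defers to the compact-case argument of Coven and Mulvey.
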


\begin{lem}[{cf.~\cite[Lemma~6]{CovenMulvey1988}}]\label{coven mulvey lemma 6}
Each $\left.f\right|_{G_n^*}$ is topologically mixing.
\end{lem}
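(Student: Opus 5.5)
We need to prove that each $\left.f\right|_{G_n^*}$ is topologically mixing, given that it is transitive by Lemma~\ref{lem: transitivity on Gn}. The plan is to combine Theorem~\ref{thm: transitive vs mixing maps} with the standing assumption that $f$ does not separate $\I$. By Lemma~\ref{lem: coven mulvey lemma 2}, $G_n^*$ is a closed interval, and it is invariant by construction. Hence $\left.f\right|_{G_n^*}$ is a transitive endomorphism of a (possibly non-compact) interval, and Theorem~\ref{thm: transitive vs mixing maps} applies to it. Either it is mixing, and we are done, or there is a unique fixed point $c$ in the interior of $G_n^*=\llbracket a,b\rrbracket$ with $f(\llbracket a,c])=[c,b\rrbracket$ and $f([c,b\rrbracket)=\llbracket a,c]$. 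We will rule out the second alternative.

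Suppose the second alternative holds. First observe that $G_n\ssq G_n^*$ contains no fixed point of $f$, while $c$ is fixed, so $c\notin G_n$. Thus $G_n$ lies entirely in $\llbracket a,c)$ or in $(c,b\rrbracket$. However, $G_n^*$ is the closure of the union of the iterates of $G_n$, which alternate sides of $c$. This is consistent, so no contradiction arises yet.

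The key step is to extend the local flip on $G_n^*$ to a separation of all of $\I$, contradicting the hypothesis. By Lemma~\ref{lem: endpoints are fixed}, any endpoint of $G_n^*$ that is interior to $\I$ is fixed. But $c$ is the unique fixed point of $f$ in $G_n^*$, and $c$ is not an endpoint. Hence neither endpoint of $G_n^*$ is interior to $\I$, which forces $G_n^*=\I$. Then $L=\llbracket \0,c]$, $C=\{c\}$ and $R=[c,\1\rrbracket$ form a separation of $\I$ corresponding to $f$ (all three are relatively closed, $L$ and $R$ are non-degenerate, $f(L)=R$, $f(R)=L$ and $f(C)=C$). This contradicts the assumption that $f$ does not separate $\I$, so $\left.f\right|_{G_n^*}$ must be mixing.

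The main obstacle is the bookkeeping in the non-compact case. I need to make sure that Theorem~\ref{thm: transitive vs mixing maps} applies to $G_n^*$ when it is half-open or unbounded, which it does, since that theorem is stated for arbitrary intervals. I also need to use Lemma~\ref{lem: endpoints are fixed} carefully for endpoints lying at $\pm\infty$ or at missing boundary points; in those cases the endpoint is simply not in $\I\setminus\{\0,\1\}$, so the conclusion $G_n^*=\I$ still holds. One subtlety to double-check is that an endpoint of $G_n^*$ which is fixed could a priori coincide with $c$. This is excluded because $c\in(a,b)$ in Theorem~\ref{thm: transitive vs mixing maps}.
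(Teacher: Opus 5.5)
Your proposal is correct and is essentially the paper's argument. The paper likewise uses Theorem~\ref{thm: transitive vs mixing maps} to obtain the flip $G_n^*=L\cup R$, then uses Lemma~\ref{lem: endpoints are fixed} to force $G_n^*=\I$ and so produce a separation of $\I$. The only difference is that the paper first reduces mixing to transitivity of $\left.f^2\right|_{G_n^*}$ via results of Banks, whereas you apply the dichotomy of Theorem~\ref{thm: transitive vs mixing maps} directly to $\left.f\right|_{G_n^*}$, which is slightly more economical.
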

\begin{proof}
Due to \cite[Lemma~7.1]{Banks1997}, it suffices to show that 
$\left.f\right|_{G_n^*}$ is totally transitive (that is, every iterate of $f$ is transitive).
To this end, since $\left.f\right|_{G_n^*}$ is transitive, we only need to show that also $\left.f^2\right|_{G_n^*}$ is transitive; see \cite[Theorem~3.1]{Banks1997} and \cite[Theorem~7.1]{Banks1997}.

For a contradiction, suppose $\left.f^2\right|_{G_n^*}$ is not transitive.
Due to transitivity of $\left.f\right|_{G_n^*}$ and Theorem~\ref{thm: transitive vs mixing maps}, this implies 
$G_n^*=L\cup R$, where $L$ and $R$ are non-degenerate closed intervals with exactly one point in common, such that $f(L)=R$ and $f(R)=L$.

Now, the proof continues exactly as in the compact case in \cite{CovenMulvey1988}:
if $G_n^*\neq \I$, then at least one endpoint $p$ of $G_n^*$ is interior to $\I$. 
Then by Lemma~\ref{lem: endpoints are fixed}, $p$ is a fixed point of $f$, and so $p=f(p)\in L\cap R$. Since this is not the case, we must have $G_n^*=\I$. 
But then $f$ separates $\I=L\cup C\cup R$, where $C=L\cap R$.
\end{proof}

Altogether, the above shows the first part of Theorem~\ref{thm: CovenMulvey appendix} with $J_1,J_2,\ldots$ an enumeration of the intervals $G_n^*$.
Since $f$ being pointwise non-wandering implies that $f^2$ is also pointwise non-wandering, the second part of Theorem~\ref{thm: CovenMulvey appendix} follows by showing that $f^2$ does not separate $\I$.

Before turning to the proof of this fact, we note that pointwise non-wandering endomorphisms on a half-open interval $\I$ never separate $\I$.
To show this, we will make use of the following observation:
a pointwise non-wandering endomorphism $f$ on $\I = (\0, \1]$ or $\I = [\0, \1)$ may fail to be surjective, but we always have $(\0, \1) \subseteq f(\I)$.
In particular,
if $\I = (\0, \1]$, then $f(L) \supseteq R \cap (\0, \1)$ and $f(R) = L$, whilst if $\I = [\0, \1)$, then $f(R) \supseteq L \cap (\0, \1)$ and $f(L) = R$.

\begin{lem}
Let $\I$ be a half-open interval and let $f\colon \I\to \I$ be continuous and pointwise non-wandering.
Then $f$ does not separate $\I$.
\end{lem}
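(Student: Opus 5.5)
By symmetry it suffices to treat $\I=(\0,\1]$; the case $\I=[\0,\1)$ follows by reflecting via $x\mapsto -x$. Suppose, for a contradiction, that $\I=L\cup C\cup R$ is a separation corresponding to $f$, with $L$ to the left of $C$ and $C$ to the left of $R$. Then $L$ is of the form $(\0,l]$ and $R$ is of the form $[r,\1]$ with $\0<l\le r<\1$. In particular, $\1\in R$ and $R$ is compact.

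The key step uses the observation made just before the statement: for $\I=(\0,\1]$ we have $f(R)=L$. On the other hand, $R$ is a compact interval and $f$ is continuous, so $f(R)$ is compact. However, $L=(\0,l]$ is not compact, since $\0\notin\I$. This is a contradiction, so no separation exists.

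For completeness I would verify the observation $(\0,\1)\subseteq f(\I)$ and the consequence $f(R)=L$. Suppose $f(\I)$ misses a point $z\in(\0,\1)$. Since $f(\I)$ is an interval, it lies entirely in $(\0,z)$ or entirely in $(z,\1]$. In the latter case, the open set $U=(\0,z)$ is disjoint from $f^n(U)\subseteq f(\I)$ for every $n\geq 1$, which contradicts the assumption that $f$ is pointwise non-wandering. In the former case, take $U=(z',\1]$ with $z'$ chosen between $\sup f(\I)$ and $\1$ (if $\sup f(\I)<\1$), or argue with $U=(z,\1]$ directly: since $f(\I)\subseteq(\0,z)$, we have $f^n(U)\cap U=\emptyset$ for all $n$, again a contradiction.

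Given $(\0,\1)\subseteq f(\I)$, we deduce $f(R)=L$ as follows. Every $y\in L\cap(\0,\1)=L$ has a preimage, and this preimage cannot lie in $L$ because $f(L)\subseteq R$, and $L\cap R$ has empty interior. The preimage also cannot lie in $C$ unless $y\in C$; here $f(C)=C$, and $L\cap C$ is at most one point. So all but at most one point of $L$ lie in $f(R)$. Since $f(R)\subseteq L$ is a compact interval, it would then have to equal all of $L$ up to an endpoint, which is impossible because $L$ is not compact at its left end: the points of $L$ arbitrarily close to $\0$ cannot all lie in a compact subset of $(\0,\1]$.

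The only delicate point is this bookkeeping of the possible exceptional point in $L\cap C$ (or $L\cap R$). It is harmless because these are single points near $l$, while the non-compactness occurs near $\0$.
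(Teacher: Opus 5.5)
Your proof is correct and is the paper's argument in mirror image. The paper treats $\I=[\0,\1)$, where a compact $L$ would have to map onto a non-compact $R$, and calls the left-open case analogous; you treat $\I=(\0,\1]$, where a compact $R$ would have to map onto a non-compact $L$. Your added verification of $(\0,\1)\subseteq f(\I)$ and of $L\setminus\{l\}\subseteq f(R)$, which the paper only states as an observation, is also sound.
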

\begin{proof}
Consider $\I=[\0,\1)$ with $\0\in \R$ and $\1\in \R\cup\{\infty\}$; the case where $\I$ is left-open instead is entirely analogous.

Suppose for a contradiction that $\I=L\cup C\cup R$ witnesses $f$ separating $\I$.
As $\I=[\0,\1)$, the leftmost piece $L$ contains the endpoint $\0$, so $L=[\0,l]$ for some $l$, while the rightmost piece $R$ is of the form $R=[r,\1)$ for some $r\geq l$.
In particular, $L$ is compact while $R$ is not.
By continuity, also $f(L)$ is compact, so $f(L)\neq R$.
As a consequence of the observation prior to the statement, this shows that $f$ does not separate $\I$.
\end{proof}

We conclude the proof of Theorem~\ref{thm: CovenMulvey appendix} with Lemma~\ref{lem: second iterate is not separating} below (the compact case is already dealt with in \cite{CovenMulvey1988}).
For its proof, note that if $\I$ is open, then pointwise non-wandering endomorphisms are necessarily surjective.

\begin{lem}[{cf.~\cite[Lemma~7]{CovenMulvey1988}}]\label{lem: second iterate is not separating}
Let $\I$ be an open interval and let $f\colon \I\to \I$ be continuous and pointwise non-wandering.
Then $f^2$ does not separate $\I$.
\end{lem}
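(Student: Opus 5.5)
Suppose for a contradiction that $f^2$ separates $\I$ via $\I=L\cup C\cup R$, with $L=(\0,l]$, $R=[r,\1)$, $f^2(L)=R$, $f^2(C)=C$ and $f^2(R)=L$. The last two equalities use surjectivity of $f^2$, which holds because $\I$ is open and $f$, hence $f^2$, is pointwise non-wandering. The strategy is the one of \cite[Lemma~7]{CovenMulvey1988}: show that $f$ itself would then separate $\I$, or else reach a contradiction with the pointwise non-wandering property. The only new issue in the non-compact setting is that $L$ and $R$ are unbounded towards $\0$ and $\1$, so images of endpoints must be handled via limits rather than via compactness.

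\emph{Step 1: $f$ maps $C$ into itself and the pieces $L,R$ are swapped or fixed by $f$.} Since $f$ commutes with $f^2$, the set $f(C)$ is invariant under $f^2$. Moreover $f(L)$ and $f(R)$ are intervals with $f^2(f(L))=f(R)$ and $f^2(f(R))=f(L)$. Since $f$ is surjective, $f(L)\cup f(C)\cup f(R)=\I$. I would use connectedness, together with the fact that $f^2$ maps $L$ into $R$ and $R$ into $L$, to argue that the interior of $f(L)$ lies entirely in $L$ or entirely in $R$. The argument runs as follows. If $f(L)$ met the interiors of both $L$ and $R$, then it would contain $C$ together with points of both. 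Applying $f^2$ would then show that $f(R)=f^2(f(L))$ also meets the interiors of both $L$ and $R$, and hence $f(L)\cap f(R)\supseteq C$. I would then derive a contradiction with the non-wandering property near $C$, or reduce to the situation where $C$ is degenerate. The same dichotomy holds for $f(R)$.

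\emph{Step 2: exclude the non-swapping cases.} Suppose first that $f(L)\subseteq L$. Then $f(R)=f^2(f(L))\subseteq f^2(L)=R$, so $f^2(L)=f(f(L))\subseteq f(L)\subseteq L$, contradicting $f^2(L)=R$ unless $L$ and $R$ overlap, which they do not. Hence $f(L)\subseteq R\cup C$ and $f(R)\subseteq L\cup C$. A small adjustment of $C$ is then needed to obtain a genuine separation for $f$: enlarge $C$ to the smallest $f$-invariant interval $C'\supseteq C$ containing $f(C)$, and shrink $L$ and $R$ accordingly. One has to check that $C'$ does not swallow $L$ or $R$ entirely. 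For this I would use that $L$ and $R$ are non-compact (they reach $\0$ and $\1$), while $C'$ is obtained from $C$ by adding the images of bounded sets under finitely many iterates. This shows that $f$ separates $\I$ as well.

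\emph{Step 3: contradiction.} It remains to see that $f$ separating $\I$ with $f^2$ also separating is impossible. In the compact case \cite{CovenMulvey1988} this follows because $f^2$ preserves $L$ and $R$ under the $f$-separation, whereas it swaps them under the $f^2$-separation. I would compare the two decompositions: the $f$-separation $L'\cup C'\cup R'$ satisfies $f^2(L')\subseteq L'$, whereas $f^2$ maps the unbounded end $L\ni$ (points near $\0$) into $R$ (near $\1$). Since $L'$ and $L$ both contain a neighbourhood of $\0$, points near $\0$ would be sent by $f^2$ both into $L'$ and into $R$. Hence $R\cap L'\neq\emptyset$, which forces $L'$ to reach into $R$ and contradicts $L'\cap R'$ having empty interior together with $f^2(L)=R$.

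I expect the main obstacle to be Step 1/2, namely the case analysis showing that $f(L)$ cannot straddle $C$ and the construction of the $f$-invariant enlargement $C'$ without using compactness. My first attempt would be to exploit that $f^2|_C$ is a surjection of $C$, so every point of $C$ has an $f^2$-preimage in $C$, and then to argue with the endpoints of $C$ as fixed points of $f^2$ (via Lemma~\ref{lem: endpoints are fixed} applied to $f^2$).
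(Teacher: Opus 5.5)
Your proposal has a genuine gap at Step~1, and Steps~2 and~3 depend on it. You never show that $f(L)$ cannot straddle $C$. After obtaining $f(L)\cap f(R)\supseteq C$ you only announce that you ``would then derive a contradiction with the non-wandering property near $C$'', and it is unclear what that contradiction would be. What you use near $C$ is consistent with, say, $f(L)=(\0,x]$ and $f(R)=[y,\1)$ for some $x\in\mathring R$, $y\in\mathring L$; the real obstruction to this configuration is global, not local near $C$.

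Likewise, $f^2(f(C))=f(C)$ does not give $f(C)\subseteq C$ as your heading claims. Any compact interval $K\supseteq C$ whose parts $K\cap L$ and $K\cap R$ are swapped by $f^2$ is $f^2$-invariant as well. In Step~2, the enlarged $C'$ is not shown to yield a separation for $f$: you would need $f(C')=C'$, and that $f$ still maps the shrunken $L$ into the shrunken $R$. Moreover, ``images of bounded sets under finitely many iterates'' does not bound the smallest $f$-invariant interval containing $C$. Finally, in Step~3 the fact $R\cap L'\neq\emptyset$ is not by itself contradictory.

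The missing idea, which you touch on in Step~3 but never use systematically, is to track which \emph{end} of $\I$ an image accumulates at, rather than which of $L,C,R$ it lies in. Since images of compact sets are compact, this gives the contradiction directly, with no need to show that $f$ separates. This is the paper's proof.
\begin{itemize}
\item First, $f(L)\neq\I$ and $f(R)\neq\I$; for instance, $f(R)=\I$ would give $L=f^2(R)=f(\I)=\I$.
\item If $f(L)$ lay in a compact subinterval, then so would $f(L\cup C)$, and surjectivity would force $f(R)=\I$. The same holds with $L$ and $R$ exchanged.
\item Hence each of $f(L)$, $f(R)$ accumulates at exactly one of $\0,\1$, and by surjectivity they accumulate at different ends.
\item If $f(L)$ accumulates at $\0$, splitting $L=(\0,a]\cup[a,l]$ into a compact piece and the rest shows that $f$ maps every interval accumulating at $\0$ to one accumulating at $\0$. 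So $f^2(L)=f(f(L))$ accumulates at $\0$.
\item If instead $f(R)$ accumulates at $\0$, the same splitting of $R$ shows that every interval accumulating at $\1$ is mapped to one accumulating at $\0$. Since $f(L)$ then accumulates at $\1$, again $f^2(L)$ accumulates at $\0$.
\end{itemize}
Either way this contradicts $f^2(L)=R=[r,\1)$, and only surjectivity is used. The same idea would repair your Step~3: $f^2\bigl((\0,\min\{l,l'\}]\bigr)$ must accumulate at $\1$ because $f^2(L)=R$, yet it lies in $f^2(L')\subseteq L'=(\0,l']$. Once you have this idea, however, the detour through Steps~1--3 is unnecessary.
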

\begin{proof}
Consider $\I=(\0,\1)$, and assume for a contradiction that $f^2$ separates $\I=L\cup C\cup R$, with $f^2(L)= R$, $f^2(C)=C$, and $f^2(R)= L$ (where we have equalities since $f$ [and thus $f^2$] is onto).
Clearly, both $L=(\0,l]$ and $R=[r,\1)$ (with $r\geq l$) are half-open; $C$ is compact.

In the following, we say an interval is \emph{left-unbounded} if it is of the form $(\0, l'\rrbracket$ for some $l' \in \I$ and \emph{right-unbounded} if it is of the form $\llbracket r', \1)$ for some $r' \in \I$.

We first note that neither of the intervals $f(L)$ nor $f(R)$ can equal $\I$: if, say, $f(R) = \I$, then $f^2(R) = f(\I) = \I$, contradicting $f^2(R) = L \neq \I$; the argument for $f(L)$ is similar.
Next, note that each of $\I\setminus f(L)$ and $\I\setminus f(R)$ has only one connected component.
Suppose instead that $\I \setminus f(L)$ has two components, one left-unbounded and one right-unbounded; then $f(L)$ is contained in a compact subinterval of $\I$, and hence so is $f(L\cup C) = f(L)\cup f(C)$, as $f(C)$ is compact.
To ensure surjectivity of $f$, 
$f(R)$ would then have to contain both a left-unbounded and a right-unbounded interval; being an interval, $f(R)$ would thus equal $\I$, contradicting the above.
The same argument applies to $\I\setminus f(R)$.

Combining these observations, exactly one of the following two cases occurs: either $f(L)$ is left-unbounded and $f(R)$ is right-unbounded, or vice versa.

Suppose first that $f(L)=(\0,l'\rrbracket$ is left-unbounded (and $f(R)$ is right-unbounded).
In this case, $f$ maps any left-unbounded interval to a left-unbounded interval.
Indeed, consider $(\0, a\rrbracket$ and assume without loss of generality that $a < l$ (the other case is similar). Then,
\[
(\0, l'\rrbracket = f(L) = f\big((\0, a)\big) \cup f([a, l]),
\]
so $f\big((\0, a\rrbracket\big)$ is left-unbounded, since $f([a, l])$ is compact.
This observation, however, implies that $f^2(L)$ is also left-unbounded, which contradicts $f^2(L) = R$.

It remains to consider the case where $f(L)$ is right-unbounded and $f(R)$ is left-unbounded.
By similar reasoning to the above, any right-unbounded interval is mapped under $f$ to a left-unbounded interval.
In particular, $f^2(L) = f(f(L))$ would be left-unbounded, which contradicts $f^2(L) = R$.
This completes the proof.
\end{proof}

\bibliography{literature}{}
\bibliographystyle{abbrv}
\vspace{-0.38cm}
\end{document}